\tikzset{inner sep=0pt, node distance=5mm,
  root/.style={circle,draw,minimum size=5pt,thick},
  broot/.style={circle,draw,minimum size=5pt,thick,fill},
  xroot/.style={circle,draw,minimum size=5pt,thick,label=below:$\times$},
  doublearrow/.style={postaction={decorate},   decoration={markings,mark=at position .6 with {\arrow[line width=1.2pt]{>}}},double distance=1.6pt,thick},
  rdoublearrow/.style={postaction={decorate},   decoration={markings,mark=at position .4 with {\arrowreversed[line width=1.2pt]{>}}},double distance=1.6pt,thick},
  curvedline/.style={bend=right}
}
\newtheorem{theorem}{Theorem}[section]
\newtheorem{lemma}[theorem]{Lemma}
\newtheorem{proposition}[theorem]{Proposition}
\theoremstyle{definition}
\newtheorem{definition}[theorem]{Definition}
\theoremstyle{remark}
\newtheorem{remark}[theorem]{Remark}
\numberwithin{equation}{section}
\newcommand{\R}{\mathbb{R}}
\newcommand{\C}{\mathbb{C}}
\newcommand{\K}{\mathbb{K}}
\newcommand{\Z}{\mathbb{Z}}
\newcommand{\Cl}{C\!\ell}
\newcommand{\Id}{\mathrm{Id}}
\newcommand{\so}{\mathfrak{so}}
\newcommand{\g}{\mathfrak{g}}
\newcommand{\ra}{\mathfrak{r}}
\newcommand{\h}{\mathfrak{h}}
\newcommand{\p}{\mathfrak{p}}
\renewcommand{\c}{\mathfrak{c}}
\renewcommand{\a}{\mathfrak{a}}
\renewcommand{\t}{\mathfrak{t}}
\newcommand{\m}{\mathfrak{m}}
\newcommand{\ad}{\mathrm{ad}}
\newcommand{\Lie}{\mathrm{Lie}}
\newcommand{\vol}{\mathrm{vol}}
\newcommand{\sign}{\mathrm{sign}}
\begin{document}
\title[{Tanaka structures modeled on extended Poincar\'e algebras}]
{Tanaka structures modeled on \\ extended Poincar\'e algebras}
\author[A.~Altomani]{Andrea Altomani}
\address[A.~Altomani]{Research Unit in Mathematics\\
Universit\'e du Luxembourg\\
6 rue Richard Coudenhove-Kalergi\\
L-1359 Luxembourg.
}
\email{andrea.altomani@uni.lu}
\author[A.~Santi]{Andrea Santi}
\address[A.~Santi]{Research Unit in Mathematics\\
Universit\'e du Luxembourg\\
6 rue Richard Coudenhove-Kalergi\\
L-1359 Luxembourg.
}\email{andrea.santi@uni.lu}
\thanks{\\ \phantom{ccc}The second author was supported by the project F1R-MTH-PUL-08HALO-HALOS08 of the University of Luxembourg.}

\keywords{Tanaka prolongations, extended Poincar\'e algebras, extended translation algebras, Clifford algebras and spinors}
\subjclass[2000]{53C30, 58A30, 53C27, 53C80}

\begin{abstract} Let $(V,(\cdot,\cdot))$ be a pseudo-Euclidean vector space and $S$ an irreducible $\Cl(V)$-module. An extended translation algebra is a graded Lie algebra
$\mathfrak{m}=\mathfrak{m}_{-2}+\mathfrak{m}_{-1}=V+S$ with bracket given by $([s,t],v)=b(v\cdot s,t)$ for some nondegenerate $\mathfrak{so}(V)$-invariant reflexive bilinear form $b$ on $S$.
An extended Poincar\'e structure on a manifold $M$ is a regular distribution $\mathcal{D}$ of depth $2$ whose Levi form $\mathcal{L}_x:\mathcal{D}_x\wedge\mathcal{D}_x\rightarrow T_x M/\mathcal{D}_x$ at any point $x\in M$ is identifiable with the bracket $[\cdot,\cdot]\colon S\wedge S\rightarrow V$ of a fixed extended translation algebra $\mathfrak{m}$. 
The classification of the standard maximally homogeneous manifolds with an extended Poincar\'e structure is given, in terms of Tanaka prolongations of extended translation algebras and of appropriate gradations of real simple Lie algebras.
\end{abstract}
\maketitle
\section{Introduction}
Let $(V,(\cdot,\cdot))$ be a pseudo-Euclidean vector space and denote by $S$ an irreducible module for the associated Clifford algebra $\Cl(V)$. An {\it extended Poincar\'e algebra} is a graded Lie algebra 
$\mathfrak{p}=\mathfrak{p}_{-2}+\mathfrak{p}_{-1}+\mathfrak{p}_{0}=V+S+\mathfrak{so}(V)$ such that the
adjoint actions $\ad(\so(V))|_V$ and $\ad(\so(V))|_S$ are, respectively, the defining and spin representations of 
the orthogonal Lie algebra 
$\so(V)$.

In \cite{AC} it was observed that any extended Poincar\'e algebra is completely determined by the $\so(V)$-invariant tensor
\begin{equation}
\label{bracketpoinc}
\Gamma=[\cdot,\cdot]|_{S\otimes S}\in \Lambda^2 S^*\otimes V
\end{equation}
that defines the Lie bracket between elements of $S$.
Such a tensor \eqref{bracketpoinc}, and the associated extended Poincar\'e algebra, were called {\it admissible} if $(\Gamma(s,t),v)=b(v\cdot s,t)$ for some nondegenerate $\so(V)$-invariant reflexive bilinear form $b$ on $S$ (whenever $S=S^++S^-$ decomposes as an $\so(V)$-module, it was also required that $S^+$ and $S^-$ were isotropic or mutually orthogonal w.r.t. $b$, see subsections \ref{subsub1}, \ref{subsub2}). 

An analogous picture was given in the case of graded Lie superalgebras: a {\it super Poincar\'e algebra} $\mathfrak{p}=\mathfrak{p}_{-2}+\mathfrak{p}_{-1}+\mathfrak{p}_{0}=V+S+\mathfrak{so}(V)$ is completely determined by a symmetric $\so(V)$-invariant tensor $\Gamma\in\vee^2 S^*\otimes V$. 

The main result of \cite{AC} is an explicit description of a basis, consisting of admissible tensors, of the space of $\so(V)$-invariant elements in $S^*\otimes S^*\otimes V$.

Admissible extended Poincar\'e algebras $\p$ in signature $\sign(V)=(3,3+k)$ are closely related to the isometry algebra of a simply connected homogeneous quaternionic
K\"ahler manifold $N=L/K$ \cite{C}. Indeed, the Lie algebra $\Lie(L)$ is obtained by adding to 
$\mathfrak{p}$ the outer derivation $E$ which satisfies $\ad(E)|_{\p_i}=i\mathrm{Id}_{\p_i}$ for every $-2\leq i\leq 0$; the stabilizer $K$ is a maximal connected compact subgroup of $L$.
One can then see \cite{AC2,dWVVP} that the full isometry algebra $\mathfrak{iso}(N)$ is the semi-direct sum of $\Lie(L)$ and a compact subalgebra of
$
\h_0=\{D\in\operatorname{Der}_0(\p)\mid Dv=0\ \forall v\in V\}
$. 
The algebras $\mathfrak{iso}(N)$ so constructed are subalgebras of non-positive degree of the {\it maximal transitive prolongation} (see section \ref{sec1} or \cite{N2} for the definition) of the two-step nilpotent Lie algebra $\m=V+W$. 

\smallskip

To a lesser extent, extended Poincar\'e algebras have also appeared in the Physics literature, e.g. in relation to self-dual Yang-Mills theory \cite{DL}.

On the other hand, super Poincar\'e algebras are a central object in the construction of supersymmetric field theories; we refer to
the books \cite{DP, F} as a starting point for the vast literature on this topic.
In \cite{SaS1,SaS2} it was observed that, whenever the extended (resp. super) Poincar\'e algebra $\mathfrak{p}=\mathfrak{p}_{-2}+\mathfrak{p}_{-1}+\mathfrak{p}_{0}$ is admissible, any connected homogeneous manifold (resp. supermanifold) $M=G/Q$ with $\Lie(G)=\mathfrak{p}$ and $\Lie(Q)=\mathfrak{p}_0$ is endowed with a $G$-invariant distribution $\mathcal{D}$ of depth $2$ (i.e. $\mathcal{D}+[\mathcal{D},\mathcal{D}]$ spans the tangent space at every point) such that $\mathcal{D}_o\simeq\mathfrak{p}_{-1}=S$ at the origin $o=eQ$. Its Levi form at $x\in M$
\[
\mathcal{L}_x:\mathcal{D}_x\wedge\mathcal{D}_x\rightarrow T_x M/\mathcal{D}_x\ ,\quad (X,Y)\mapsto [X,Y]|_x \mod \mathcal{D}_x\,,
\]
is, up to linear isomorphism, independent of $x$ and indentifiable with the tensor \eqref{bracketpoinc}.
Homogeneous supermanifolds $M=G/Q$ of this kind can be considered as the vacua models for those supersymmetric field theories known by the name of supergravity theories
(see e.g. \cite{L,RKS, RS1, RS2, SaS1, SaS2} and references therein).

\smallskip

We define, inspired by \cite{SaS1, SaS2}, a geometrical structure on a manifold that can be considered as the curved analogue of the above homogeneous space.
\begin{definition}
\label{struttura}
Let $\mathfrak{p}=\mathfrak{p}_{-2}+\mathfrak{p}_{-1}+\mathfrak{p}_0=V+S+\mathfrak{so}(V)$ be an extended Poincar\'e algebra determined by the tensor \eqref{bracketpoinc} associated to a nondegenerate $\so(V)$-invariant reflexive bilinear form $b$ on $S$.
An {\it extended Poincar\'e structure} on a manifold $M$ of dimension $\dim M=\dim V+\dim S$ is a depth $2$ smooth distribution $\mathcal{D}$ with rank $\mathcal{D}=\dim S$, and Levi form $\mathcal{L}_x:\mathcal{D}_x\wedge\mathcal{D}_x\rightarrow T_x M/\mathcal{D}_x$ identifiable at any point $x\in M$ with the tensor \eqref{bracketpoinc} determining $\mathfrak{p}=V+S+\so(V)$.
\end{definition}
A similar definition of {\it super Poincar\'e structure} can be given for a supermanifold $M$ endowed with an odd distribution $\mathcal{D}$.
Here, we restrict ourselves to the non-super case. This has a double motivation: on one hand, the theory of Lie algebras provides useful insights 
into the more challenging Lie superalgebra case; on the other hand, we believe that extended Poincar\'e structures have their own interest from a purely mathematical point of view.
The ``super'' case will be discussed in the upcoming paper \cite{AS2}.
\vskip0.5cm
The main aim of this work is to initiate the investigation of the geometry of manifolds endowed with an extended Poincar\'e structure. 
We call {\it extended translation algebra} the negatively graded part
\begin{equation}
\label{negpart}
\mathfrak{m}=\mathfrak{m}_{-2}+\mathfrak{m}_{-1}=\mathfrak{p}_{-2}+\mathfrak{p}_{-1}=V+S
\end{equation} 
of an extended Poincar\'e algebra $\mathfrak{p}$ as in Definition \ref{struttura} and denote by
\begin{equation}
\label{mtp}
\mathfrak{g}=\mathfrak{m}_{-2}+\mathfrak{m}_{-1}+\mathfrak{g}_{0}+\mathfrak{g}_{1}+\cdots
\end{equation}
its maximal transitive prolongation in the sense of Tanaka.

\smallskip

If $\dim V\geq 3$, we prove that \eqref{mtp} is finite-dimensional. In this case, any manifold $(M,\mathcal{D})$ with an extended Poincar\'e structure is in particular endowed with a Tanaka structure (see e.g. \cite{AS} for the definition) of finite type. Classical Tanaka theory \cite{N1, N2} implies that the automorphism group $\mathrm{Aut}(M,\mathcal{D})$ is a Lie group with dimension bounded from above by $\dim \mathfrak{g}$. 

The quotient $M=G/Q$ of the simply connected Lie group $G$ with Lie algebra $\Lie(G)=\g$,
by the connected subgroup $Q$ with Lie algebra $\mathrm{Lie}(Q)=\sum_{i\geq 0} \mathfrak{g}_{i}$, has a canonical extended Poincar\'e structure whose group of automorphisms
is locally isomorphic to $G$; in the terminology of Tanaka, $M$ is the {\it maximally homogeneous model}.
Our main result is the classification of maximal transitive prolongations of extended translation algebras (Theorems \ref{abcdefg} and \ref{N1r}) 
and, consequently, the classification of maximally homogeneous models. 

For all but a finite number of signatures of $V$,
the maximal transitive prolongation \eqref{mtp} is the semi-direct sum of $\m$ with $\g_0=\so(V)+\R E+\h_0$, where $E$ is the derivation of $\m$ satisfying $\ad(E)|_{\m_i}=i\mathrm{Id}_{\m_i}$ for $-2\leq i\leq -1$ and $\h_0=\{D\in\operatorname{Der}_0(\m)\mid Dv=0\ \forall v\in V\}$. 
In the remaining cases, $\g=\g_{-2}+\dots+\g_{2}$ is a simple Lie algebra of type $\mathrm{A}$, $\mathrm{C}$, $\mathrm{E}_6$ or $\mathrm{F} _4$ as listed in Theorem \ref{N1r}. 

In the case of the exceptional Lie algebras $\mathrm{E}_6$ and $\mathrm{F} _4$, we remark that our construction is similar in spirit, but different, to the one given in \cite{A}.
Note also that when $\m$ is the extended translation algebra associated to a homogeneous quaternionic K\"ahler manifold $N=L/K$, the additional symmetries arising from elements of positive degree are not isometries of $N$; it might be interesting to understand their geometrical significance.

Whenever \eqref{mtp} is simple, an extended Poincar\'e structure is a parabolic geometry in the sense of e.g. \cite{AS, CS} and the associated flag manifolds are precisely the maximally homogeneous models.  We point out that the action of $\Cl(V)$ on $S$ gives rise to a canonical family of partial almost \mbox{(para-)}complex structures, for which the linear connections described in \cite{SaS1} satisfy compatibility conditions similar to those of the Tanaka-Webster connection (\cite{Tc, W}, see \cite{DT} for further references) in pseudo-Hermitian geometry. 
Deeper investigation on this subject will be the content of future works. 

\smallskip

If $\dim V\leq 2$, the maximal transitive prolongation \eqref{mtp} is a real or complex contact Lie algebra. In both cases, it is infinite-dimensional and the maximally homogeneous model is properly defined only locally. This reflects the fact that the extended Poincar\'e structure reduces to a contact structure. 

\smallskip

Let us finally note that more general extended Poincar\'e algebras $\mathfrak{p}=\mathfrak{p}_{-2}+\mathfrak{p}_{-1}+\mathfrak{p}_{0}=V+W+\so(V)$ 
have also been considered, where $W=S\oplus\cdots\oplus S$ is the direct sum of $N$ copies of an irreducible $\Cl(V)$-module $S$ (see e.g. \cite{AC}).
If $N>1$, Theorem \ref{abcdefg} classifies the maximal transitive prolongations
of extended translation algebras $\mathfrak{m}=\mathfrak{m}_{-2}+\mathfrak{m}_{-1}=V+W$ which are simple. 

\vskip0.5cm

The paper is structured as follows. 
In Section \ref{sec1} we give the relevant definitions, compute explicitly the $\g_0$ component of the maximal transitive prolongation $\g$ (Theorem \ref{lo0}), and prove that for $\dim V\geq 3$ the Lie algebra $\g$ is finite dimensional (Theorem \ref{fine}). Moreover, for $N=1$, we show that either $\g_i=0$ for all $i>0$, or $\g$ is simple (Theorem \ref{nonloso}).

Section \ref{sec2} contains the classification of all simple Lie algebras $\g$ that arise as maximal transitive prolongations of extended translation algebras, for arbitrary $N\geq 1$ (Theorem \ref{theoC} and Theorem \ref{abcdefg}). The classification is given in terms of Dynkin and Satake diagrams with additional data describing the gradation $\g=\sum_i\g_i$.

In Section \ref{sec3} we describe the cases with $\dim V=1,2$, and show that $\g$ is an infinite dimensional Lie algebra of contact type.

The final Section \ref{sec4} contains the full classification for $N=1$ (Theorem \ref{N1c} and Theorem \ref{N1r}).

\medskip

\noindent{\it Acknowledgements}. The authors are grateful to A.\ Spiro for useful discussions and helpful comments.

\section{The Tanaka prolongation.}
\label{sec1}
Let $V$ be a finite dimensional vector space over $\K=\R$ or $\K=\C$, with a nondegenerate, possibly indefinite in the real case, symmetric bilinear form $(\cdot,\cdot)$, and let $\Cl(V)=\Cl(V)^{\overline{0}}+\Cl(V)^{\overline{1}}$ be the associated Clifford algebra with its natural $\mathbb{Z}_2$-gradation. We adopt the convention that the product in $\Cl(V)$ satisfies $vu+uv=-2(v,u)\mathbf 1$ for any $v,u\in V$.

Let $W$ be a $\Cl(V)$-module. The action of an element $c\in\Cl(V)$ on $s\in W$ will be denoted by $c\cdot s$. We will denote by $N\geq 1$ the number of irreducible $\Cl(V)$-components of $W$, counted with their multiplicities (we note that this convention is not universally adopted, and some authors use ``$N$'' to denote the number of irreducible $\so(V)$-components).
  
Fix  a nondegenerate bilinear form $\beta$ on $W$ with the properties:
\begin{enumerate}
\item[(B1)]
$\beta$ is $\so(V)$-invariant,
\item[(B2)]
$\beta$ is symmetric or skew-symmetric 
(we let $\epsilon=-1$ in the former case and $\epsilon=1$ in the latter),
\item[(B3)]
for all $v\in V$ and $s,t\in W$, $\beta(v\cdot s,t)=\epsilon\beta(s,v\cdot t)$.
\end{enumerate}
Set $\m_{-2}=V$, $\m_{-1}=W$, $\m=\m_{-2}+\m_{-1}$. On $\m$ we define a structure of graded Lie algebra with Lie bracket defined, for $s,t\in W$ and $v\in V$, by
\begin{equation}\label{equazione_bracket}
([s,t],v)=\beta(v\cdot s,t).
\end{equation}
\begin{definition}
Any graded Lie algebra 
$\m=\m(V,W,\beta)$ as defined above is called an {\it extended translation algebra}.
\end{definition}

We recall that the \emph{maximal transitive prolongation} of $\m$ is a graded Lie algebra $\g=\sum_{p\in \Z} \g_p$ such that:
\begin{enumerate}
\item
$\g_p$ is finite dimensional for every $p\in\Z$;
\item
$\g_p=\m_p$ for $p=-1,-2$ and $\g_p=0$ for $p<-2$;
\item
for all $p\geq 0$, if $X\in\g_p$ is an element such that $[X,\g_{-1}]=0$, then $X=0$ (\emph{transitivity});
\item
$\g$ is maximal with these properties, i.e. if $\g'$ is another graded Lie algebra satisfying (1), (2), and (3), then there exists an injective homomorphism of graded Lie algebras $\phi\colon\g'\to\g$.
\end{enumerate}
The existence and uniqueness of $\g$ is proved in \cite{N1,N2}.
\begin{remark}
The maximal transitive prolongation depends only on the graded Lie algebra structure of $\m$. We are not aware of any classification of extended translation algebras {\it up to graded Lie algebra isomorphisms}:
Theorems \ref{N1c} and \ref{N1r} easily imply such a classification for $N=1$. 
\end{remark}

We will continue denoting by $W$ (resp. $V$) the spaces $\m_{-1}$ and $\g_{-1}$ (resp. $\m_{-2}$ and $\g_{-2}$). 
For any linear map $\phi\colon W\to W$, we denote by ${}^T\phi\colon W\to W$ the transpose of $\phi$ w.r.t.\ $\beta$.
Similarly, if $\phi\colon V\to V$ is a linear map, we denote by ${}^T\phi\colon V\to V$ the transpose of $\phi$ w.r.t.\ $(\cdot,\cdot)$.

\subsection{Computing $\g_0$}
In this section we explicitly compute the subalgebra $\g_0$ of the maximal transitive prolongation $\g=\sum_{p\in\Z}\g_p$ of an extended translation algebra $\m=V+W$, and we prove that if $\dim V\geq3$ then $\g$ is finite dimensional.

We recall that $\g_0$ is just the Lie algebra of $0$-degree derivations of $\m$. We will use indifferently the notations $[D,X]$ and $DX$ for the Lie bracket in $\g$ of an element $D\in\g_0$ and an element $X\in\m$.

 Let now $E\in\g_0$ be the derivation acting with eigenvalue $-1$ on $W$ and $-2$ on $V$. We call $E$ the \emph{grading element} of $\g$.
Moreover, let  $\h_0$ be the set of elements in $\g_0$ acting trivially on $V$:
\[\h_0=\{D\in\g_0\mid [D,v]=0\ \forall v\in V\}.\]
 
\begin{theorem}
\label{lo0}
The Lie algebra $\g_0$ is a direct sum of ideals:
\[
\g_0=\so(V)\oplus\K E\oplus\h_0\,.
\]
\end{theorem}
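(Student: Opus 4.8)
The plan is to encode each $D\in\g_0=\operatorname{Der}_0(\m)$ by the pair $(A,B):=(D|_W,D|_V)$ and to turn the derivation rule into a single linear-algebraic identity on $W$. Applying $D$ to \eqref{equazione_bracket} and using nondegeneracy of $\beta$ in the second slot, the condition that $D$ be a derivation becomes the operator identity
\[
c_{{}^TB\,v}=c_v A+{}^TA\,c_v\qquad(\forall v\in V),
\]
where $c_v$ denotes Clifford multiplication by $v$, and ${}^TB$, ${}^TA$ are the transposes with respect to $(\cdot,\cdot)$ and $\beta$. Two preliminary remarks organize the argument. First, $\ad(E)$ vanishes on $\g_0$, so the grading element $E$ is central in $\g_0$. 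Second, since $V$ is invariant under every $D\in\g_0$, the restriction $\pi\colon\g_0\to\mathfrak{gl}(V)$, $D\mapsto B$, is a Lie algebra homomorphism whose kernel is exactly $\h_0$; in particular $\h_0$ is an ideal. The strategy is then to compute the image $\pi(\g_0)$ and to show that $\h_0$ commutes with $\so(V)$.

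The first key computation is that $\pi(\g_0)=\so(V)\oplus\K\Id_V$. The inclusion $\supseteq$ is immediate, since $\so(V),E\in\g_0$ map to $\so(V)$ and $-2\Id_V$. For $\subseteq$ I would anticommute the master identity with $c_v$: using $c_v^2=-(v,v)\mathbf1$ and $\{c_v,c_{{}^TB v}\}=-2(v,{}^TB v)\mathbf1=-2(B_{\mathrm{sym}}v,v)\mathbf1$, one obtains $(B_{\mathrm{sym}}v,v)\mathbf1=(v,v)A_{\mathrm{sym}}-c_v A_{\mathrm{sym}}c_v$, where $A_{\mathrm{sym}}=\tfrac12(A+{}^TA)$ and $B_{\mathrm{sym}}$ is the $(\cdot,\cdot)$-symmetric part of $B$. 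For anisotropic $v$ this rearranges into $\{c_v,A_{\mathrm{sym}}\}=\frac{(B_{\mathrm{sym}}v,v)}{(v,v)}\,c_v$. The left-hand side is linear in $v$, while the right-hand side is $c_v$ scaled by a function homogeneous of degree $0$; evaluating on two independent anisotropic vectors and using injectivity of $v\mapsto c_v$ forces this function to be constant, hence $B_{\mathrm{sym}}=\lambda\Id_V$ and $B\in\so(V)\oplus\K\Id_V$. I expect this to be the main obstacle: the whole point is to extract from the Clifford relations that the self-adjoint part of $B$ can only be a homothety.

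The second key computation is that $[\so(V),\h_0]=0$. For $A\in\h_0$ the master identity reads $c_vA=-{}^TA\,c_v$, so for every anisotropic $v$ one has $c_vAc_v^{-1}=-{}^TA$, a quantity independent of $v$. Consequently $A$ commutes with $c_w^{-1}c_v$, equivalently with $c_vc_w$, for all anisotropic $v,w$; since such products span and generate the even subalgebra, $A$ lies in the commutant of $\Cl(V)^{\overline0}$ and in particular commutes with the spin action $\rho(\so(V))\subseteq\Cl(V)^{\overline0}$. Hence the adjoint action of $\so(V)$ on $\h_0$ is trivial.

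It remains to assemble the decomposition. Centrality of $E$ gives $[\K E,\so(V)]=[\K E,\h_0]=0$, and together with $[\so(V),\h_0]=0$ this shows that the three subspaces pairwise commute; they are subalgebras and they intersect only in $0$ (if $X\in\so(V)\cap\h_0$ then $X=\pi(D_X)=0$, and $\Id_V\notin\so(V)$ gives $\K E\cap(\so(V)+\h_0)=0$). Finally they span $\g_0$: given $D$, the first computation writes $\pi(D)=B_{\mathrm{skew}}+\lambda\Id_V$, so correcting $D$ by the element $D_{B_{\mathrm{skew}}}-\tfrac\lambda2 E\in\so(V)+\K E$ lands it in $\ker\pi=\h_0$. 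Therefore $\g_0=\so(V)\oplus\K E\oplus\h_0$ is a direct sum of ideals, as claimed.
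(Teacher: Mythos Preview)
Your proof is correct and follows essentially the same architecture as the paper's: both derive the same master identity $c_{{}^TB v}=c_vA+{}^TA c_v$ (the paper writes it elementwise as \eqref{eq:trans}), both reduce the problem to showing that the $(\cdot,\cdot)$-symmetric part of $D|_V$ is a homothety, and both verify $[\so(V),\h_0]=0$ via Clifford-algebra computations. The only real differences are tactical: for the symmetric part the paper manipulates $\frac{v\cdot Dv\cdot v}{(v,v)}$ and expands in a second orthogonal direction, whereas your anticommutator trick $\{c_v,\cdot\}$ extracts $A_{\mathrm{sym}}$ and $B_{\mathrm{sym}}$ more directly; and for $[\so(V),\h_0]=0$ the paper computes explicitly with generators $vu-uv\in\Lambda^2V$, while you observe more conceptually that $c_vAc_v^{-1}=-{}^TA$ being independent of $v$ forces $A$ into the commutant of $\Cl(V)^{\overline 0}$.
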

\begin{proof}

Note that a $0$-degree linear endomorphism $D\colon\m\to\m$ belongs to $\g_0$ if and only if for all $s\in W$ and $v\in V$ we have:
\begin{equation}\label{eq:trans}
({}^TDv)\cdot s=v\cdot(Ds)+{}^TD(v\cdot s)\ .
\end{equation}
and that $\so(V)$, with the standard action on $W$ and $V$, is contained in $\g_0$. Let $\g_0^A$ (resp. $\g_0^S$) be the set of elements $D\in\g_0$ such that $D|_{V}$ is skew-symmetric (resp. symmetric).
Clearly $\so(V)\subset\g_0^A$ and $\g_0=\so(V)+\g_0^S$ as direct sum of vector spaces. Moreover
$\g_0^A=\so(V)+\h_0$ as direct sum of vector spaces.

We now show that if $D\in\g_0^A$, then ${}^TD\in\g_0^A$.
More precisely, if $D\in\so(V)$ then ${}^TD=-D\in\so(V)$ because $(\,\cdot\,,\,\cdot\,)$ and $\beta$ are $\so(V)$-invariant. 
On the other hand, if $D\in\h_0$ then by \eqref{eq:trans}, for all $v\in V$ and $s\in W$:
\[
v\cdot Ds+{}^TD(v\cdot s)=0\,.
\]
Multiplying by $v$ on both sides,
we also have, for all $v\in V$ and $s\in W$:
\[
v\cdot {}^TDs+D( v\cdot s)=0\,,
\]
and, again by \eqref{eq:trans}, also ${}^TD\in\h_0$.

Let $A\in\so(V)$ be an element of the form $(vu-uv)\in\Lambda^2 V\subset \Cl(V)$, and $H\in\h_0$. 
For any $s\in W$ we have:
\begin{align*}
AHs&=v\cdot u\cdot Hs-u\cdot v\cdot Hs=-v\cdot{}^TH (u\cdot s)+u\cdot{}^TH (v\cdot s)\\
&=H(v\cdot u\cdot s)-H(u\cdot v\cdot s)=HAs.
\end{align*}
Thus $[A,H]=0$ and $[\so(V),\h_0]=0$.
The Lie algebra $\g_0^A$ decomposes hence as direct sum $\g_0^A=\so(V)\oplus \h_0$ of ideals.

Let us now consider $D\in\g_0^S$. By \eqref{eq:trans} we have, for all $v\in V$ and $s\in W$:
\[ Dv\cdot s=v\cdot Ds+{}^TD( v\cdot s)\ .\]
Then, for any $v\in V$ such that $(v,v)\neq0$ and $s\in W$, we get:
\[\frac{v\cdot Dv\cdot v}{(v,v)}\cdot s= -v\cdot {}^TDs-D( v\cdot s)\ .\]
The right hand side is manifestly linear in $v$. 
Expanding the left hand side, in the direction of a non-isotropic vector $u\in V$, orthogonal to $v$, and annihilating the second order term we obtain:
\[ (v,v)\big(v\cdot Du\cdot u+u\cdot Dv\cdot u +u\cdot Du\cdot v\big)=(u,u)(v\cdot Dv\cdot v),\]
and hence
\[0=(u,Du)(v,v)-(v,Dv)(u,u)-2(v,Du)uv\,.\]
It follows that $D$ is a multiple of the identity on $V$, and $D\in\h_0\oplus\mathbb KE$. Since $E$ is in the center of $\g_0$, the theorem follows.
\end{proof}

We observe, for later use in section \ref{sec2}, that $\h_0$ splits into the sum of its symmetric part $\h_0^{s}$ and skew-symmetric part $\h_0^{a}$ with respect to $\beta$. The following relations hold:
\[
[\h_0^{a},\h_0^{a}]\subseteq  \h_0^{a}\ ,\quad[\h_0^{a},\h_0^{s}]\subseteq  \h_0^{s}\ ,\quad[\h_0^{s},\h_0^{s}]\subseteq  \h_0^{a}\ ,
\]
and the condition that elements of $\h_0$ act as derivations of $\m$ yields:
\begin{equation}\label{eeeeaaaa}
\begin{aligned}
\h_0^{a}&=\{D\in\mathfrak{gl}(W)\mid \beta(Ds,t)=-\beta(s,Dt),\     D(v\cdot s)=v\cdot Ds\ \forall v\in V,\ s,t\in W\}\,,\\
\h_0^{s}&=\{D\in\mathfrak{gl}(W)\mid \beta(Ds,t)=\beta(s,Dt),\ D(v\cdot s)=-v\cdot Ds\ \forall v\in V,\ s,t\in W\}\,.
\end{aligned}
\end{equation}
\medskip

We conclude the paragraph proving that, if $\dim V\geq 3$, then the maximal transitive prolongation $\g$ is finite dimensional and the action of $\g_p$, $p>0$ is faithful on $\g_{-2}$. 

\begin{theorem}
\label{fine}
If $\dim V\geq 3$ then
\begin{enumerate}
\item
$\m$ is of finite type, i.e. the maximal transitive prolongation $\g$ of $\m$ has finite dimension;
\item
for all $p>0$ and $X\in\g_p$, if $[X,\g_{-2}]=0$ then $X=0$.
\end{enumerate}
\end{theorem}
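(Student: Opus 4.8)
The plan is to prove (2) first and then to deduce the finiteness (1) from it; the two tools used throughout are the Jacobi identity combined with the depth-two condition $\g_{-3}=0$ (equivalently $[W,V]=0$), and the explicit description $\g_0=\so(V)\oplus\K E\oplus\h_0$ from Theorem \ref{lo0}, where $\h_0$ is singled out by the relation ${}^TD(v\cdot s)=-v\cdot Ds$.

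For (2) I would use descending induction on $p$. If $X\in\g_p$ satisfies $[X,V]=0$, put $Y_s:=[X,s]\in\g_{p-1}$; then for $v\in V$ the Jacobi identity gives $[Y_s,v]=[X,[s,v]]-[s,[X,v]]=0$, since $[s,v]\in\g_{-3}=0$ and $[X,v]=0$. Thus each $Y_s$ again annihilates $V$. For $p\geq 2$ the inductive hypothesis forces $Y_s=0$ for all $s$, and transitivity yields $X=0$. The case $p=1$ is the heart of the matter: there $Y_s\in\g_0$ kills $V$, so $Y_s\in\h_0$, and applying Jacobi to $[X,[s,t]]$ gives the symmetry $Y_s t=Y_t s$. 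One is reduced to showing that a symmetric linear map $s\mapsto Y_s\in\h_0$ is necessarily zero. I would encode this in the trilinear form $\tau(r,s,t):=\beta(Y_s t,r)$, symmetric in $(s,t)$, and combine the defining relation of $\h_0$ with (B3) to obtain a Clifford shift relating the insertion of a factor $v\in V$ in one slot of $\tau$ to its insertion in another. Introducing three mutually orthogonal non-isotropic vectors and using $e_ie_j=-e_je_i$ then forces $\tau$ to be killed by the invertible even element $e_ie_j$, so $\tau=0$ and $Y=0$.

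For (1) I would first record the reduction: by transitivity, if $\g_q=0$ for a single $q\geq 0$ then $\g_{q+1}=0$, and inductively $\g_p=0$ for all $p\geq q$; so it is enough to find one vanishing positive component. To locate it I would exploit (2) together with the fact that $\h_0$ acts trivially on $V$ while $\so(V)\oplus\K E$ acts on $V=\g_{-2}$ as the conformal algebra of $(V,(\cdot,\cdot))$. Iterating $\ad(V)$ on a positive-degree element $X$ and using $[V,V]=0$ shows that the maps $(v_1,\dots,v_k)\mapsto\ad(v_1)\cdots\ad(v_k)X$ are symmetric, so $X$ gives rise to symmetric jets $\mathrm{Sym}^k V\to\g_{p-2k}$ which, by (2), cannot all vanish before the degree drops below $-1$. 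Decomposing the target $\g_{p-2k}$ into $\so(V)$-irreducibles (built from $V$, the spinor module $W$, $\so(V)$ and $\h_0$) one sees that these modules cannot support arbitrarily high symmetric powers of $V$ once $\dim V\geq 3$; hence the jets, and with them the positive part of $\g$, terminate. This is exactly the point that breaks down for $\dim V\leq 2$, where $\m$ becomes a (complex) contact algebra with infinite prolongation.

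The main obstacle is concentrated in the two Clifford-algebraic steps that genuinely require $\dim V\geq 3$. The base case of (2) is not a formal consequence of the Jacobi relations alone — the identities satisfied by $\tau$ are self-consistent, and their triviality rests on the concrete structure of the Clifford module (the grade involution and the Schur commutant), so a real computation with three independent orthogonal directions is unavoidable. Likewise, the termination in (1) requires a careful bound on the symmetric jets while correctly handling the inert subalgebra $\h_0$ and the odd-degree components; this is where the whole weight of the hypothesis $\dim V\geq 3$ is carried.
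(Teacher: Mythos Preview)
Your inductive scheme for (2) is sound, and its base case $p=1$ is precisely the statement that the first Cartan prolongation $\h_0^{(1)}\subset W^*\otimes\h_0$ vanishes: the assignment $s\mapsto Y_s$ with $Y_st=Y_ts$ is exactly an element of $\h_0^{(1)}$. Your plan to kill $\tau(r,s,t)=\beta(Y_st,r)$ by inserting three orthogonal non-isotropic vectors is in fact the same computation the paper does, repackaged: the paper sets $\alpha(s,t)=\beta(x\cdot y\cdot z\cdot s,t)$, checks that $\alpha$ is symmetric nondegenerate and that $\h_0\subset\so(W,\alpha)$, and then uses the standard fact that an orthogonal subalgebra has trivial first prolongation. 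So for (2) you and the paper agree on the key mechanism; the paper's phrasing via the auxiliary form $\alpha$ is cleaner and avoids the case split between $\h_0^a$ and $\h_0^s$ that your direct $\tau$-computation would need.

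The genuine gap is in your argument for (1). From (2) you correctly extract injections $\g_p\hookrightarrow\mathrm{Sym}^kV^*\otimes\g_{p-2k}$, but the target \emph{grows} with $k$, so this alone never forces $\g_p=0$. Your claim that ``these modules cannot support arbitrarily high symmetric powers of $V$'' conflates two different $\so(V)$-equivariance statements: for a fixed $X$ the map $\mathrm{Sym}^kV\to\g_{p-2k}$ is \emph{not} $\so(V)$-equivariant, so bounding the highest weights of $\g_{-2},\g_{-1},\g_0$ tells you nothing. What is $\so(V)$-equivariant is the embedding of $\g_p$ into $\mathrm{Sym}^kV^*\otimes\g_{p-2k}$, and that target contains $\so(V)$-types of arbitrarily high weight. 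The extra constraint you are missing is the bracket with $W$ (transitivity), and organising the two constraints together into a termination statement is exactly the content of Tanaka's finiteness criterion (Theorem~11.1 in \cite{N2}): the Tanaka prolongation of a depth-two $\m$ is finite-dimensional whenever $\h_0=\ker(\g_0\to\mathfrak{gl}(\g_{-2}))$ has trivial Cartan prolongation in $\mathfrak{gl}(W)$. The paper simply invokes this theorem once $\h_0\subset\so(W,\alpha)$ is established, obtaining both (1) and (2) at once. If you want to avoid the citation you must essentially reprove that criterion; your jet heuristic is the right intuition (it is the statement that $\mathfrak{co}(V)$ has finite prolongation when $\dim V\geq3$) but it does not become a proof until you control the $\h_0$-valued part of the jets, and that is where the vanishing of $\h_0^{(1)}$ has to be fed back in.
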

\begin{proof}
Let $x,y,z\in V$ be orthogonal non-isotropic vectors. Consider the bilinear form $\alpha$ on $W$ defined by\footnote{This bilinear form has been already considered in \cite{C}.}
\begin{equation}
\label{laforma}
\alpha(s,t)=([y\cdot z\cdot s,t],x)=\beta(x\cdot y \cdot z \cdot s,t)\ .
\end{equation}
Straightforward computations show that 
\begin{enumerate}
\item
$\alpha$ is symmetric and nondegenerate,
\item
$\h_0\subset\so(W,\alpha)$.
\end{enumerate}
Then $\h_0\subset\mathfrak{gl}(W)$ has a trivial Cartan prolongation (in the sense of \textit{e.g.} \cite[Chapter VII]{St}). From the classical Tanaka theory \cite[Theorem 11.1]{N2} the statements follow.
\end{proof}

In the low dimensional cases $\dim V=1,2$, the Lie algebra $\g$ is actually infinite dimensional. This will be discussed in detail in section \ref{sec3}.
\medskip

{\it From now on, we will assume without further mention that $\dim V\geq 3$.}

\subsection{The structure of $\g$ in the $N=1$ case}
In this section we define a canonical $\so(V)$-equivariant linear embedding of $\g_1$ into $W$, such that the image is a $\Cl(V)$-submodule. We then use it to describe the maximal transitive prolongation $\g$ in the case where $N=1$, i.e. $W$ is an irreducible $\Cl(V)$-module.

Let $D\in\g_1$. By Theorem \ref{fine}, the element $D$ is uniquely determined by its action on $V$. 
\begin{lemma}
\label{lemmag1}
For every $D\in\g_1$ there exists a unique $s\in W$ such that $Dv=v\cdot s$ for all $v\in V$.
\end{lemma}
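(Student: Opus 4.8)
The plan is to prove uniqueness and existence separately, with the existence half resting entirely on one Jacobi identity together with Theorem \ref{lo0}. Uniqueness is immediate from the nondegeneracy of the Clifford action: if $v\cdot s=v\cdot s'$ for every $v\in V$, then $v\cdot(s-s')=0$ for all $v$, and multiplying by a non-isotropic $v_0$ gives $-(v_0,v_0)(s-s')=v_0\cdot v_0\cdot(s-s')=0$, so $s=s'$. Equivalently, $s\mapsto(v\mapsto v\cdot s)$ is injective, so once existence is known, $s$ is recovered from $Dv$, which by Theorem \ref{fine}(2) already determines $D$. It therefore remains to produce one $s\in W$ with $\phi(v)=v\cdot s$, where $\phi:=D|_V\colon V\to W$.

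First I would extract a usable identity from the grading. Since $[\m_{-2},\m_{-1}]=0$, the Jacobi identity for $D\in\g_1$, $v\in V=\g_{-2}$ and $s\in W=\g_{-1}$ collapses to
\[
[\,\phi(v),s\,]=[\,Ds,v\,]\,,
\]
where the left-hand bracket is the extended-translation bracket $W\times W\to V$ of \eqref{equazione_bracket}, and the right-hand side is the action on $V$ of the degree-zero derivation $Ds=[D,s]\in\g_0$. Here I invoke Theorem \ref{lo0}: writing $Ds=a(s)+\mu(s)E+H(s)$ with $a(s)\in\so(V)$, $\mu(s)\in\K$ and $H(s)\in\h_0$ (all depending linearly on $s$), the summand $H(s)$ annihilates $V$ while $E$ acts on $V$ as $-2\,\Id$, so $[\,Ds,v\,]=a(s)v-2\mu(s)v$.

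The key step is to pair this identity with an arbitrary $w\in V$ and to symmetrize in $v$ and $w$. Using $([\,p,q\,],w)=\beta(w\cdot p,q)$, the left side becomes $\beta(w\cdot\phi(v),s)$, while on the right the part $(a(s)v,w)$ is skew-symmetric in $(v,w)$ (because $a(s)\in\so(V)$) and $-2\mu(s)(v,w)$ is symmetric. Symmetrization thus discards $a(s)$ completely and leaves
\[
\beta\big(w\cdot\phi(v)+v\cdot\phi(w),\,s\big)=-4\mu(s)\,(v,w)\qquad(\forall\,v,w\in V,\ s\in W).
\]
Since $\mu\colon W\to\K$ is a fixed linear functional and $\beta$ is nondegenerate, there is a unique $s_0\in W$ with $\beta(s_0,s)=-4\mu(s)$ for all $s$; nondegeneracy of $\beta$ then forces the $W$-valued symmetric form to be metric-proportional, $w\cdot\phi(v)+v\cdot\phi(w)=(v,w)\,s_0$. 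Specializing to $v=w$ with $(v,v)\neq0$ gives $2\,v\cdot\phi(v)=(v,v)s_0$; multiplying by $v$ and using $v\cdot v=-(v,v)\mathbf{1}$ yields $\phi(v)=v\cdot s$ with $s:=-\tfrac12 s_0$. As the non-isotropic vectors span $V$ and both sides are linear, the equality $\phi(v)=v\cdot s$ extends to all $v\in V$.

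I expect the one genuinely conceptual point to be the symmetrization: recognizing that pairing $[\,\phi(v),s\,]=[\,Ds,v\,]$ with $V$ and symmetrizing eliminates the $\so(V)$-contribution $a(s)$, so that the $W$-valued quadratic expression $w\cdot\phi(v)+v\cdot\phi(w)$ reduces to a multiple of the metric and hence, by nondegeneracy of $\beta$, to a single spinor $s_0$. Everything else is the bookkeeping of the Jacobi identity, the decomposition of $\g_0$ from Theorem \ref{lo0}, and the Clifford relation $v\cdot v=-(v,v)\mathbf{1}$.
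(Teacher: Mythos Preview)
Your proof is correct and uses the same two ingredients as the paper---the Jacobi identity for $D,v,s$ (with $[\m_{-2},\m_{-1}]=0$) and the decomposition of $\g_0$ from Theorem~\ref{lo0}---but organizes them differently. The paper fixes one non-isotropic $u$, writes down the candidate $s=-\dfrac{u\cdot Du}{(u,u)}$, and then checks $Dv=v\cdot s$ separately for $v=u$ and for $v\perp u$ by a direct $\beta$-computation (invoking, at the key step, that $D(u\cdot t)\in\g_0$ acts on $V$ as a skew operator plus a multiple of the identity). You instead pair the Jacobi identity with $w\in V$ and symmetrize in $(v,w)$, which kills the $\so(V)$-part $a(s)$ once and for all and yields the clean metric-proportional relation $w\cdot\phi(v)+v\cdot\phi(w)=(v,w)\,s_0$; specializing to $v=w=u$ non-isotropic recovers exactly the paper's $s=-\tfrac12 s_0=-\dfrac{u\cdot Du}{(u,u)}$. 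Your route is slightly more coordinate-free and makes the role of the $\so(V)$-cancellation explicit; the paper's route is more hands-on and produces the explicit formula for $s$ immediately. Both are short and equivalent in strength.
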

\begin{proof}
Fix an element $u\in V$ with $(u,u)\neq0$ and define a map $D':V\to W$ by
\[
D'v=-\frac{v\cdot u\cdot Du}{(u,u)}\ .
\]
If $v=u$ then clearly $Dv=D'v$. If $v$ is orthogonal to $u$ then, for all $t\in W$:
\begin{align*}
\beta(D'v,t)&=\frac{\beta(u\cdot v\cdot Du,t)}{(u,u)}=\epsilon\frac{\beta(v\cdot Du,u\cdot t)}{(u,u)}\\
  &=\epsilon\frac{([Du,u\cdot t],v)}{(u,u)}=\epsilon\frac{([D(u\cdot t),u],v)}{(u,u)}\ ,
  \end{align*}
where, in the last equality, we used the fact that $[V,W]=0$. Hence,
 \begin{align*}
\beta(D'v,t) &=\epsilon\frac{([{}^T(D(u\cdot t)),v],u)}{(u,u)}=-\epsilon\frac{([D(u\cdot t),v],u)}{(u,u)}\\
  &=\epsilon \frac{([u\cdot t,Dv],u)}{(u,u)}=\beta(Dv,t)\ ,
\end{align*}
where, in the second equality, we used the fact that $D(u\cdot t)$ is the sum of a skew-symmetric operator on $V$ and a multiple of the identity. 
It follows that $D|_{V}=D'$ and the statement is true with $s=-\frac{u\cdot Du}{(u,u)}$.
\end{proof}
Let $\phi\colon\g_1\to W$ be the map satisfying
\[
Dv=v\cdot \phi(D)
\]
for all $D\in\g_1$ and $v\in V$.
It is unique and well defined by Lemma \ref{lemmag1}, and it is injective by (2) in Theorem \ref{fine}. 

For any $A\in \so(V)$, $D\in\g_1$ and $v\in V$, we have
\begin{align*}
v\cdot \phi([A,D])&=[[A,D],v]=[A,[D,v]]-[D,[A,v]]\\
&=A(v\cdot \phi(D))-(Av)\cdot \phi(D)=v\cdot [A,\phi(D)]
\end{align*}
so that $\phi$ is a morphism of $\so(V)$-modules.
\par
The next step is determining the behaviour of $\phi$ with respect to Clifford multiplications. 
\begin{proposition}
\label{hanome}
For every non-isotropic $v\in V$, there exists a $0$-degree Lie algebra automorphism $\psi:\g\to\g$ satisfying
\begin{enumerate}
\item
$\psi(s)=v\cdot s$ for all $s\in W$,
\item
$
\psi(\phi(D))=\epsilon\phi(\psi^{-1}(D))
$
for all $D\in \g_1$.
\end{enumerate}
In particular, the image $\phi(\g_1)\subset W$ is a $\Cl(V)$-submodule of $W$.
\end{proposition}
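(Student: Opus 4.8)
The plan is to construct the automorphism $\psi$ explicitly as conjugation by Clifford multiplication, then verify the two listed properties and deduce the final claim about $\phi(\g_1)$ being a $\Cl(V)$-submodule.

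\medskip

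The starting observation is that for a non-isotropic $v\in V$, Clifford multiplication $c_v\colon s\mapsto v\cdot s$ is an invertible linear map on $W$, with $c_v^2 = v\cdot v\cdot(\,\cdot\,) = -(v,v)\,\mathrm{Id}_W$ by the defining relation of $\Cl(V)$. On $V$ itself the natural candidate is the reflection $\sigma_v\colon V\to V$ across the hyperplane $v^\perp$, i.e. $\sigma_v(u) = u - \tfrac{2(u,v)}{(v,v)}v$, which is the orthogonal transformation intertwined with $c_v$ in the sense that $v\cdot u\cdot s = -u\cdot v\cdot s$ holds for $u\perp v$ (and $v\cdot v\cdot s$ rescales). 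First I would define a degree-$0$ linear map $\psi$ on $\m=V+W$ by setting $\psi|_W = c_v$ (up to the normalization forced by the Clifford relation) and $\psi|_V = \sigma_v$, and check that this is a Lie algebra automorphism of $\m$ using the bracket formula \eqref{equazione_bracket}: concretely, $\psi$ respects the bracket iff $(\,[\psi s,\psi t],\sigma_v w\,) = (\,[s,t],w\,)$ for all $s,t\in W$ and $w\in V$, which unwinds via (B1)--(B3) and the Clifford relation into an identity about how $\sigma_v$ and $c_v$ interact with $\beta(w\cdot(\,\cdot\,),\,\cdot\,)$. Because $\psi|_\m$ is an automorphism of the graded Lie algebra $\m$, the functoriality in item (4) of the definition of maximal transitive prolongation extends it uniquely to a $0$-degree automorphism $\psi\colon\g\to\g$. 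This already gives property (1), possibly after absorbing the scalar from $c_v^2=-(v,v)\,\mathrm{Id}$ into the normalization of $v$ or of $\psi$.

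\medskip

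For property (2), the key is that $\phi$ is characterized by the relation $Dw = w\cdot\phi(D)$ for all $w\in V$ (Lemma \ref{lemmag1}), so I would compute $\psi(\phi(D))$ by applying the grading-shift behaviour of $\psi$ to this defining equation. Since $\psi$ is a degree-$0$ automorphism, for $D\in\g_1$ the element $\psi^{-1}(D)$ again lies in $\g_1$ and satisfies $\psi^{-1}(D)\,w = w\cdot\phi(\psi^{-1}(D))$. Conjugating $D$'s defining relation by $\psi$ and comparing gives, for all $w\in V$,
\[
\psi^{-1}(D)\,w \;=\; \psi^{-1}\big(D\,\psi(w)\big) \;=\; \psi^{-1}\big(\sigma_v(w)\cdot \phi(D)\big),
\]
and one expands the right-hand side using $\psi^{-1}|_W = c_v^{-1}$ together with the anticommutation $c_v\,c_{\sigma_v(w)} = -c_w\,c_v$ on $V$. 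Tracking the sign $\epsilon$ through the identity $v\cdot u = -u\cdot v$ for $u\perp v$ (versus $u=v$) is precisely what produces the factor $\epsilon$ in $\psi(\phi(D)) = \epsilon\,\phi(\psi^{-1}(D))$. The main obstacle I anticipate is exactly this bookkeeping of signs and scalars: one must disentangle the normalization of $c_v$ (the $-(v,v)$ from $c_v^2$), the reflection formula for $\sigma_v$, and the symmetry constant $\epsilon$, and verify they conspire to give the clean statement rather than an extraneous factor. Getting the \emph{uniqueness} and well-definedness of $\psi$ on all of $\g$ (not just on $\m$) is handled by appealing to property (4) of the prolongation, so that part is not where the difficulty lies.

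\medskip

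Finally, the submodule claim follows formally: property (2) rearranges to $\phi(\psi^{-1}(D)) = \epsilon\,\psi(\phi(D)) = \epsilon\,v\cdot\phi(D)$ by property (1). As $D$ ranges over $\g_1$, so does $\psi^{-1}(D)$ (since $\psi$ is an automorphism preserving degree), hence the left-hand side ranges over all of $\phi(\g_1)$; this shows $v\cdot\phi(\g_1)\subseteq\phi(\g_1)$ for every non-isotropic $v$. Since the non-isotropic vectors span $V$ and generate $\Cl(V)$ as an algebra, $\phi(\g_1)$ is stable under the full $\Cl(V)$-action, i.e. it is a $\Cl(V)$-submodule of $W$. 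I would close by remarking that combined with the already-established fact that $\phi$ is an $\so(V)$-morphism, this makes $\phi(\g_1)$ into a genuine $\Cl(V)$-submodule, which is the structural input needed for the $N=1$ classification that follows.
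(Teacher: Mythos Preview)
Your approach is essentially the same as the paper's: define $\psi$ on $\m$ by Clifford multiplication on $W$ and a reflection-type map on $V$, extend to $\g$ by functoriality of the maximal transitive prolongation, then compute $\phi(\psi^{-1}(D))$ from the defining relation $Dw=w\cdot\phi(D)$. One correction to your bookkeeping: with $\psi|_W=c_v$ fixed by property~(1), the automorphism condition on the bracket forces $\psi|_V=\epsilon(v,v)\,\sigma_v$, not $\sigma_v$ alone; indeed $([v\cdot s,v\cdot t],u)=\epsilon\beta(v\cdot u\cdot v\cdot s,t)=\epsilon(v,v)(\sigma_v[s,t],u)$ via (B3), and it is this factor of $\epsilon$ in $\psi|_V$, rather than the Clifford anticommutation $v\cdot u=-u\cdot v$, that ultimately produces the $\epsilon$ in property~(2). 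With that adjustment your computation goes through exactly as in the paper, and your deduction of the $\Cl(V)$-submodule claim is correct.
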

\begin{proof}
Fix a non-isotropic vector $v\in V$. Define a $0$-degree Lie algebra automorphism $\psi$ of $\mathfrak{m}$ as:
\begin{align*}
\psi(s)&=v\cdot s&&\forall s\in W\ ,\\
\psi(u)&=\epsilon(v,v)\left(u-\frac{2(v,u)}{(v,v)}v\right)&&\forall u\in V\ ,
\end{align*}
and still denote by $\psi$ its canonical extension to a $0$-degree Lie algebra automorphism of $\g$. Observe that:
\begin{align*}
\psi^{-1}(s)&=-\frac{v\cdot s}{(v,v)}=-\frac{\psi(s)}{(v,v)}&&\forall s\in W\ ,\\
\psi^{-1}(u)&=\frac\epsilon{(v,v)}\left(u-\frac{2(v,u)}{(v,v)}v\right)=\frac{\psi(u)}{(v,v)^2}&&\forall u\in V\ .
\end{align*}

If $D\in\g_{1}$, then:
\begin{align*}
u\cdot\phi(\psi^{-1}(D))&=[\psi^{-1}(D),u]=\psi^{-1}([D,\psi(u)])\\
  &=\psi^{-1}\big(\psi(u)\cdot\phi(D)\big)=\epsilon\psi^{-1}\big(v\cdot u\cdot v\cdot\phi(D)\big)\\
  &=\epsilon u\cdot v\cdot\phi(D)
\end{align*}
for all $u\in V$. It follows then
\[
\phi(\psi^{-1}(D))=\epsilon v\cdot\phi(D)=\epsilon\psi(\phi(D))\ .
\]
The last statement follows from the fact that $V$ generates $\Cl(V)$.
\end{proof}
The following theorem describes the structure of $\g$ for $N=1$.
\begin{theorem}
\label{nonloso}
Let $\dim V\geq 3$ and $\g$ be the maximal transitive prolongation of $\m=V+W$. If $W$ is an irreducible $\Cl(V)$-module, then exactly one of the following two cases occurs:
\begin{enumerate}
\item $\g_{p}=0$ for all $p\geq 1$;
\item $\g$ is simple.
\end{enumerate}
\end{theorem}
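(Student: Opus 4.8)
The plan is to read Proposition~\ref{hanome} as a clean dichotomy and then prove simplicity by an ideal argument organized around the grading element $E$. Since $\phi(\g_1)$ is a $\Cl(V)$-submodule of the irreducible module $W$, either $\g_1=0$ or $\phi\colon\g_1\to W$ is an isomorphism (it is injective by Theorem~\ref{fine}(2)). If $\g_1=0$, I argue by induction that $\g_p=0$ for all $p\ge1$: assuming $\g_q=0$ for $1\le q\le p-1$, any $X\in\g_p$ satisfies $[X,\g_{-1}]\subseteq\g_{p-1}=0$, hence $X=0$ by transitivity. This is alternative $(1)$. From now on I assume $\g_1\cong W\ne0$ and must prove that $\g$ is simple.

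Let $\mathfrak a\subseteq\g$ be a nonzero ideal. Since $\ad(E)$ acts on $\g_p$ as multiplication by $p$, the ideal $\mathfrak a$ is $\ad(E)$-stable, hence graded: $\mathfrak a=\sum_p\mathfrak a_p$ with $\mathfrak a_p=\mathfrak a\cap\g_p$. I first claim $\mathfrak a_{-2}=V$. Let $k$ be least with $\mathfrak a_k\ne0$. If $k\ge0$, any $0\ne X\in\mathfrak a_k$ has $[X,\g_{-1}]\subseteq\mathfrak a_{k-1}=0$, so $X=0$ by transitivity, a contradiction; thus $k\le-1$. If $k=-1$, any $0\ne s\in\mathfrak a_{-1}$ satisfies $[s,W]\subseteq\mathfrak a_{-2}=0$, i.e.\ $\beta(v\cdot s,t)=0$ for all $v,t$; nondegeneracy of $\beta$ gives $v\cdot s=0$ for all $v\in V$, and a non-isotropic $v$ yields $s=-(v,v)^{-1}v\cdot(v\cdot s)=0$, again a contradiction. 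Hence $\mathfrak a_{-2}\ne0$, and since $V$ is $\so(V)$-irreducible for $\dim V\ge3$ and $\mathfrak a_{-2}$ is an $\so(V)$-submodule, $\mathfrak a_{-2}=V$. Using $[\phi^{-1}(t),v]=v\cdot t$ together with the surjectivity of Clifford multiplication $V\cdot W=W$, I then obtain $\g_{-1}=[\g_1,V]\subseteq\mathfrak a$, so $\g_{-1}+\g_{-2}\subseteq\mathfrak a$.

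Next I recover part of $\g_0$. The key computation is $([[\phi^{-1}(t),s],v],v')=-\beta(v'\cdot s,v\cdot t)$, from which (B3) yields $\mathrm{tr}_V\big(\ad[\phi^{-1}(t),s]\big)=\epsilon\,(\dim V)\,\beta(s,t)$ and, after antisymmetrizing in $v,v'$, a nonzero component in $\Lambda^2V\subseteq\so(V)$. Since $\beta$ is nondegenerate and $\Lambda^2V$ acts nontrivially on $W$, the ideal $[\g_{-1},\g_1]\subseteq\g_0$ has nonzero projections both onto the central line $\K E$ (the ``trace on $V$'') and onto each simple ideal of the semisimple algebra $\so(V)$; this forces $\so(V)\subseteq[\g_{-1},\g_1]\subseteq\mathfrak a$. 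Because the irreducible $\Cl(V)$-module $W$ has no trivial $\so(V)$-summand, $\g_1=[\so(V),\g_1]\subseteq\mathfrak a$.

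It remains to saturate $\mathfrak a$ in all degrees $\ge0$, and I expect this to be the main obstacle: I still have to recover the grading element $E$, the summand $\h_0$ of $\g_0$, and the whole positive part. My plan is to establish the two structural identities $\g_0=[\g_{-1},\g_1]+[\g_{-2},\g_2]$ and $\g_{p+1}=[\g_1,\g_p]$ for $p\ge1$; granting these, $\g_{-1}+\g_{-2}\subseteq\mathfrak a$ and $\g_1\subseteq\mathfrak a$ propagate to $\g_2=[\g_1,\g_1]\subseteq\mathfrak a$, hence $[\g_{-2},\g_2]\subseteq\mathfrak a$, hence $\g_0\subseteq\mathfrak a$, and then $\g_p\subseteq\mathfrak a$ for all $p\ge2$ by induction, giving $\mathfrak a=\g$. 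The hard point is precisely that neither identity is a formal consequence of the prolongation axioms: both the generation of $\g_{>0}$ by $\g_1$ and the filling of the $\h_0$-direction of $\g_0$ must be extracted from the Clifford structure, using $\h_0\subseteq\so(W,\alpha)$ from Theorem~\ref{fine} and the description \eqref{eeeeaaaa} to control the $\h_0$-contamination of $[\g_{-1},\g_1]$ and to exclude any prolongation in positive degree not generated by $\g_1$.
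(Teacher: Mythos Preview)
Your opening is solid: the dichotomy from Proposition~\ref{hanome}, the $\g_1=0$ case, and the chain $\mathfrak a_{-2}=V\Rightarrow\g_{-1}\subseteq\mathfrak a\Rightarrow\so(V)\subseteq[\g_{-1},\g_1]\subseteq\mathfrak a\Rightarrow\g_1\subseteq\mathfrak a$ are all correct (the last step uses that $[\g_{-1},\g_1]$ is an ideal of $\g_0$ and that the map $\Lambda^2V\to (W^*)^{\otimes2}$, $A\mapsto\beta(\cdot,A\cdot)$, is injective, hence the $\so(V)$-projection of $[\g_{-1},\g_1]$ is surjective).

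The gap is exactly where you locate it, and I do not think your plan closes it. The two identities $\g_0=[\g_{-1},\g_1]+[\g_{-2},\g_2]$ and $\g_{p+1}=[\g_1,\g_p]$ are equivalent to saying that $\g$ is generated by $\g_{\pm1}$, which is essentially what you are trying to prove; nothing in Theorem~\ref{fine} or in \eqref{eeeeaaaa} gives you control over whether $\h_0$ lies in $[\g_{-1},\g_1]$, and your trace computation only produces elements of the form $cE+h$ with $h\in\h_0$ unknown, from which $E$ cannot be extracted without already knowing $\h_0\subseteq\mathfrak a$. Likewise there is no mechanism in the prolongation axioms forcing $[\g_1,\g_1]=\g_2$.

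The paper bypasses this entirely by arguing with structure theory rather than a direct ideal computation. First one shows the radical $\ra$ vanishes: if $\ra_{-1}=0$ then transitivity and $\so(V)$-irreducibility of $V$ force $\ra\subseteq\g_{-2}$, whence $[\g_1,\g_{-2}]\subseteq\ra_{-1}=0$ and Theorem~\ref{fine}(2) gives $\g_1=0$; if $\ra_{-1}\neq0$ one passes to the last nonvanishing term $\ra^{(\ell)}$ of the derived series, observes that $\ra^{(\ell)}_{-1}$ is invariant under the automorphisms $\psi$ of Proposition~\ref{hanome}, hence is a $\Cl(V)$-submodule of $W$ and equals $W$, and then $[\g_1,\g_{-2}]\subseteq\ra^{(\ell+1)}_{-1}=0$ again forces $\g_1=0$. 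Once $\g$ is semisimple, a putative splitting $\g=\g'\oplus\g''$ is ruled out using $\so(V)$-irreducibility of $\g_{-2}$ and nondegeneracy of the Killing form in degree $\pm2$ and of $\beta$ in degree $\pm1$, together with transitivity in degree $0$. This route never needs to know that $E$ or $\h_0$ lies in a given ideal, nor that $\g_1$ generates $\g_{\ge1}$.
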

\begin{proof}
If $\g_{1}=0$, then $\g_{p}=0$ for all $p\geq 1$ by transitivity of $\g$.

Suppose $\g_1\neq 0$.
We first prove that the radical $\ra$ of $\g$ is zero. The proof is a variant on the proof of Theorem $3.22$ in \cite{MN}. Since $\g$ contains a grading element, every ideal of $\g$ is graded. We distinguish two cases.

Case $\ra_{-1}=0$. By transitivity of $\g$, we have $\ra=\ra_{-2}$.
On the other hand, since $\ra_{-2}$ is $\so(V)$-invariant and $\g_{-2}$ is $\so(V)$-irreducible, either $\ra=0$ or $\ra=\g_{-2}$. In the latter case,
$[\g_1,\g_{-2}]\subset \ra_{-1}=0$ and it follows that $\g_1=0$, contradicting our assumptions.

Case $\ra_{-1}\neq 0$. Define by recurrence $\ra^{(0)}=\ra$ and, for $h\geq 1$, $\ra^{(h)}:=[\ra^{(h-1)},\ra^{(h-1)}]$. Let $\ell$ be the smallest integer such that $\ra^{(\ell)}_{-1}\neq 0$ and $\ra^{(\ell+1)}_{-1}= 0$. Note that $\ra^{(\ell)}$ is invariant under the action of all automorphisms of $\g$. In particular, $\ra^{(\ell)}_{-1}$ is invariant under the action of the automorphism
$\psi:\g \to\g$ defined in Proposition \ref{hanome}. It follows that $\ra^{(\ell)}_{-1}$ is a $\Cl(V)$-submodule of $W$ and thus $\ra^{(\ell)}_{-1}=\g_{-1}$. Then, 
\[
\g_{-2}=[\g_{-1},\g_{-1}]=[\ra^{(\ell)}_{-1},\ra^{(\ell)}_{-1}]\subset \ra^{(\ell+1)}_{-2}\subset \g_{-2}\ .
\]
Hence $\ra^{(\ell+1)}_{-2}=\g_{-2}$. Reasoning as in previous case, $[\g_{1},\g_{-2}]\subset \ra^{(\ell+1)}_{-1}=0$, yielding a contradiction.

Therefore $\g$ is semi-simple.  It is simple, because if it is the direct sum of two semi-simple ideals $\g=\g'\oplus\g''$, then each of the subspaces $\g'_{-2}$ and $\g''_{-2}$ is invariant for the action of $\so(V)\subset \g_0$. By irreducibility of $V=\g_{-2}$, one obtains, say, $\g'_{-2}=\g_{-2}$ and $\g''_{-2}=0$. 

By non-degeneracy of the Killing forms of $\g'$ and $\g''$, we also get $\g'_{2}=\g_{2}$ and $\g''_{2}=0$. Hence $\g''\subset \g_{-1}+\g_{0}+\g_{1}$ and, by non-degeneracy of $\beta$, $\g''_{-1}=0=\g''_{1}$. Finally, $\g''=\g''_{0}$ and equality $[\g''_0,\g_{-1}]\subset \g''_{-1}=0$ gives, by transitivity of $\g$, that $\g''=0$. The theorem is proved. 
\end{proof}

\begin{remark}
In the case $N>1$ we do not know if there are examples such that $\g_1\neq0$ and $\phi$ is not surjective, and in particular $\g$ is not simple.
\end{remark}

\section{Classification of simple prolongations.}
\label{sec2}
In this section we classify all real and complex simple graded Lie algebras $\mathfrak{g}$ that arise as prolongations of extended translation algebras $\m=V + W$, for $\dim V\geq 3$. Note that Lemma 3.17 and Theorem 3.21 of \cite{MN}, together with our Theorem \ref{fine}, directly imply that any prolongation $\mathfrak{g}$ of this kind is also necessarily the unique \emph{maximal} and \emph{transitive} prolongation of $\mathfrak{m}$.

\subsection{The complex case}
\label{subsub1}
In some proofs, we will make extensive use of the notation and results of \cite{AC} and the classification of Clifford algebras as given in \cite{LM}. In particular, we adopt the following conventions.

\begin{itemize}
\item 
The symbol $\mathbb S$ will denote an irreducible complex $\Cl(V)$-module. If $\dim V$ is odd, there exist two inequivalent irreducible $\Cl(V)$-modules. Whenever necessary, we denote them by $\mathbb S'$ and $\mathbb S''$. In all other cases, one can safely assume for instance that $\mathbb S$ is the $\Cl(V)$-module explicitly described in \cite{AC}. Clifford multiplication on $\mathbb S$ will be denoted by 
``$\,\circ\,$''. 
\item
If $\mathbb{S}$ is $\so(V)$-reducible, we denote by $\mathbb S^+$ and $\mathbb S^-$ its irreducible $\so(V)$-submodules.
\item
A nondegenerate bilinear form $b\colon\mathbb S\times\mathbb S\to\C$ is called \emph{admissible} if there exist $\tau,\sigma\in\{\pm1\}$ such that $b(v\circ s, t)=\tau b(s, v\circ t)=\sigma b(t, v\circ s)$ for all $v\in V$ and $s,t\in\mathbb{S}$. If $\mathbb S=\mathbb S^++\mathbb S^-$, we also require that $\mathbb S^+$ and $\mathbb S^-$ are either isotropic or mutually orthogonal w.r.t. $b$
(in the former case we set $\imath=-1$, in the latter $\imath=1$). The numbers $(\tau,\sigma)$, or $(\tau,\sigma,\imath)$, are the \emph{invariants associated with $b$}.
\end{itemize}

We first recall the description of simple complex graded Lie algebras (see e.g. \cite{CS, GOV, Y}). Let $\g$ be a complex simple Lie algebra. Fix a Cartan subalgebra $\mathfrak{c}\subset\g$, and denote by $\Delta=\Delta(\g,\mathfrak{c})$ the root system and by $\mathfrak{c}_{\R}\subset\mathfrak{c}$ the real subspace where all the roots are real valued. 
Any element $\lambda\in(\mathfrak{c}_\R^*)^*\simeq\mathfrak{c}_\R$ with $\lambda(\alpha)\in\mathbb{Z}$ for all $\alpha\in\Delta$ defines a gradation on $\g$ by setting:
\begin{align*}
\begin{cases}
\g_0=\mathfrak{c}+
\displaystyle\sum_{\substack{\alpha\in\Delta\\\lambda(\alpha)=0}}\g^\alpha,\\
\g_p=\displaystyle\sum_{\substack{\alpha\in\Delta\\\lambda(\alpha)=p}}\g^\alpha, &\forall p\in\Z\setminus\{0\},
\end{cases}
\end{align*}
and all possible gradations on $\g$ are of this form, for some choice of $\mathfrak{c}$ and $\lambda$. We will refer to $\lambda(\alpha)$ as the \emph{degree} of the root $\alpha$.

There exists a set of positive roots $\Delta^+\subset\Delta$ such that $\lambda$ is dominant, i.e. $\lambda(\alpha)\geq0$ for all $\alpha\in\Delta^+$. 
The \emph{depth} of $\g$ is the degree of the maximal root.

Let $\Pi$ be the set of positive simple roots, which we identify with the nodes of the Dynkin diagram. A gradation is \emph{fundamental} if $\g_{-1}$ generates $\sum_{p<0}\g_p$.
This is true if and only if $\lambda(\alpha)\in\{0,1\}$ for all simple roots $\alpha\in\Pi$. We denote a fundamental gradation on a Lie algebra $\g$ by marking with a cross the nodes of the Dynkin diagram of $\g$ corresponding to simple roots $\alpha$ with $\lambda(\alpha)=1$.

The Lie subalgebra $\g_0$ is reductive. The Dynkin diagram of its semisimple ideal is obtained from the Dynkin diagram of $\g$ by removing all crossed nodes, and any line issuing from them.

A routine examination of the Dynkin diagrams of simple complex Lie algebras shows that Table \ref{tabella} below lists all the fundamental complex graded simple Lie algebras satisfying the conditions:
\begin{enumerate}
\item
the gradation has depth $2$;
\item
$\g_0$ is the Lie algebra direct sum of $\so(n,\C)$, where $n=\dim\g_{-2}\geq3$, of $\C E$, where $E$ is the grading element, and of a reductive Lie subalgebra $\h_0$ acting trivially on $\g_{-2}$;
\item
the representation of $\so(n,\C)\subset\g_0$ on $\g_{-2}$ is equivalent to the standard representation of $\so(n,\C)$.
\end{enumerate}
\vfill

\begin{table}[H]
\begin{tabular}{|c|c|c|c|c|}
\hline
\hline
& $\g$ & $\dim\g_{-2}$ & $\dim\g_{-1}$ & $\h_0$\\
\hline
\hline

$A_{\ell}$&
\begin{tikzpicture}
\node[root]   (1)                     {};
\node[xroot] (2) [right=of 1] {} edge [-] (1);
\node[]   (3) [right=of 2] {$\;\cdots\,$} edge [-] (2);
\node[xroot]   (4) [right=of 3] {} edge [-] (3);
\node[root]   (5) [right=of 4] {} edge [-] (4);
\end{tikzpicture}&
$4$ & $4(\ell-3)$ &$A_{\ell-4}\oplus\C$
\\
\hline

$B_4$&
\begin{tikzpicture}
\node[root]   (1)                     {};
\node[root] (2) [right=of 1] {} edge [-] (1);
\node[root]   (3) [right=of 2] {} edge [-] (2);
\node[xroot]   (4) [right=of 3] {} edge [rdoublearrow] (3);
\end{tikzpicture}&
$6$ & $4$ & $0$
\\
\hline

$B_{\ell}$&
\begin{tikzpicture}
\node[root]   (1)                     {};
\node[root] (2) [right=of 1] {} edge [-] (1);
\node[root]   (3) [right=of 2] {} edge [-] (2);
\node[xroot]   (4) [right=of 3] {} edge [-] (3);
\node[]   (5) [right=of 4] {$\;\cdots\,$} edge [-] (4);
\node[root]   (7) [right=of 5] {} edge [rdoublearrow] (5);
\end{tikzpicture}&
$6$ & $8\ell-28$ & $B_{\ell-4}$
\\
\hline 

$C_{\ell}$&
\begin{tikzpicture}
\node[root]   (1)                     {};
\node[xroot] (2) [right=of 1] {} edge [-] (1);
\node[]   (3) [right=of 2] {$\;\cdots\,$} edge [-] (2);
\node[root]   (5) [right=of 3] {} edge [doublearrow] (3);
\end{tikzpicture}&
$3$ & $4(\ell-2)$ & $C_{\ell-2}$
\\
\hline

$D_5$&
\begin{tikzpicture}
\node[root]   (1)                     {};
\node[root] (2) [right=of 1] {} edge [-] (1);
\node[root]   (3) [right=of 2] {} edge [-] (2);
\node[xroot]   (4) [above right=of 3] {} edge [-] (3);
\node[xroot]   (5) [below right=of 3] {} edge [-] (3);
\end{tikzpicture}&
$6$ & $8$ & $\C$
\\
\hline

$D_{\ell}$&
\begin{tikzpicture}
\node[root]   (1)                     {};
\node[root] (2) [right=of 1] {} edge [-] (1);
\node[root]   (3) [right=of 2] {} edge [-] (2);
\node[xroot]   (4) [right=of 3] {} edge [-] (3);
\node[]   (6) [right=of 4] {$\;\cdots\quad$} edge [-] (4);
\node[root]   (7) [above right=of 6] {} edge [-] (6);
\node[root]   (8) [below right=of 6] {} edge [-] (6);
\end{tikzpicture}&
$6$ & $8(\ell-4)$ & $D_{\ell-4}$
\\
\hline

$E_6$&
\begin{tikzpicture}
\node[xroot]   (1)                     {};
\node[root] (2) [right=of 1] {} edge [-] (1);
\node[root]   (3) [right=of 2] {} edge [-] (2);
\node[root]   (4) [right=of 3] {} edge [-] (3);
\node[xroot]   (5) [right=of 4] {} edge [-] (4);
\node[root]   (6) [below=of 3] {} edge [-] (3);
\end{tikzpicture}&
$8$ & $16$ & $\C$
\\
\hline

$E_7$&
\begin{tikzpicture}
\node[root]   (1)              {};
\node[root]   (3) [right=of 1] {} edge [-] (1);
\node[root]   (4) [right=of 3] {} edge [-] (3);
\node[root]   (5) [right=of 4] {} edge [-] (4);
\node[xroot]  (6) [right=of 5] {} edge [-] (5);
\node[root]   (7) [right=of 6] {} edge [-] (6);
\node[root]   (2) [below=of 4] {} edge [-] (4);
\end{tikzpicture}&
$10$ & $32$ & $A_1$
\\
\hline

$E_8$&
\begin{tikzpicture}
\node[xroot]  (1)              {};
\node[root]   (3) [right=of 1] {} edge [-] (1);
\node[root]   (4) [right=of 3] {} edge [-] (3);
\node[root]   (5) [right=of 4] {} edge [-] (4);
\node[root]   (6) [right=of 5] {} edge [-] (5);
\node[root]   (7) [right=of 6] {} edge [-] (6);
\node[root]   (2) [below=of 4] {} edge [-] (4);
\node[root]   (8) [right=of 7] {} edge [-] (7);
\end{tikzpicture}&
$14$ & $64$ & $0$
\\
\hline

$F_4$&
\begin{tikzpicture}
\node[root]   (1)                     {};
\node[root] (2) [right=of 1] {} edge [-] (1);
\node[root]   (3) [right=of 2] {} edge [rdoublearrow] (2);
\node[xroot]   (4) [right=of 3] {} edge [-] (3);
\end{tikzpicture}&
$7$ & $8$ & $0$\\
\hline 
\hline
\end{tabular}
\caption{}
\label{tabella}
\end{table}

%
%
%
%

\begin{theorem}
\label{theoC}
The simple complex graded Lie algebras $\g$ that are the maximal transitive prolongations of  a complex extended translation algebra are the following:
\begin{table}[H]
\begin{tabular}{|c|c|c|c|c|c|}
\hline
\hline
& $\g$ & $\dim\g_{-2}$ & $\dim\g_{-1}$ & $N$ & $\h_0$\\
\hline
\hline

$\begin{gathered}\mathrm{A}_\ell\\\ell\geq4\end{gathered}$&\begin{tikzpicture}
\node[root]   (1)                     {};
\node[xroot] (2) [right=of 1] {} edge [-] (1);
\node[]   (3) [right=of 2] {$\;\cdots\,$} edge [-] (2);
\node[xroot]   (4) [right=of 3] {} edge [-] (3);
\node[root]   (5) [right=of 4] {} edge [-] (4);
\end{tikzpicture}&
$4$ & $4N$ & $\ell-3$ &$\mathfrak{sl}(N,\C)\oplus\C$
\\
\hline

$\begin{gathered}\mathrm{C}_\ell\\\ell\geq3\end{gathered}$&
\begin{tikzpicture}
\node[root]   (1)                     {};
\node[xroot] (2) [right=of 1] {} edge [-] (1);
\node[]   (3) [right=of 2] {$\;\cdots\,$} edge [-] (2);
\node[root]   (5) [right=of 3] {} edge [doublearrow] (3);
\end{tikzpicture}&
$3$ & $2N$ & $2(\ell-2)$ & $\mathfrak{sp}(\frac{N}{2},\C)$
\\
\hline

$\mathrm E_6$&
\begin{tikzpicture}
\node[xroot]   (1)                     {};
\node[root] (2) [right=of 1] {} edge [-] (1);
\node[root]   (3) [right=of 2] {} edge [-] (2);
\node[root]   (4) [right=of 3] {} edge [-] (3);
\node[xroot]   (5) [right=of 4] {} edge [-] (4);
\node[root]   (6) [below=of 3] {} edge [-] (3);
\end{tikzpicture}&
$8$ & $16$ & $1$ & $\C$
\\
\hline

$\mathrm F_4$&
\begin{tikzpicture}
\node[root]   (1)                     {};
\node[root] (2) [right=of 1] {} edge [-] (1);
\node[root]   (3) [right=of 2] {} edge [rdoublearrow] (2);
\node[xroot]   (4) [right=of 3] {} edge [-] (3);
\end{tikzpicture}&
$7$ & $8$ & $1$ & $0$
\\
\hline 
\hline
\end{tabular}
\caption{}
\label{tabellina}
\end{table}
\end{theorem}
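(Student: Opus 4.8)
The strategy is to intersect the finite list of candidates in Table~\ref{tabella} with the representation-theoretic constraints imposed by the extended translation algebra structure, and then to verify realizability for the survivors. First I would check that every simple $\g$ occurring as the maximal transitive prolongation of an extended translation algebra $\m=V+W$ satisfies the three hypotheses preceding Table~\ref{tabella}. By construction $\g_{-2}=V$ carries the standard representation of $\so(V)$ and $\dim V\geq 3$; Theorem~\ref{lo0} gives $\g_0=\so(V)\oplus\K E\oplus\h_0$, and since $\g$ is simple with grading element $E$ the subalgebra $\g_0$, hence $\h_0$, is reductive; the depth is $2$; and the gradation is fundamental because nondegeneracy of $\beta$ forces $[W,W]=V$, so that $\g_{-1}$ generates $\g_{-2}$. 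Thus $\g$ is one of the ten families of Table~\ref{tabella}, and the problem reduces to deciding which of them genuinely arise. For the converse I would rely on Lemma~3.17 and Theorem~3.21 of \cite{MN}, already invoked at the start of this section: as soon as a table entry $\g$ is shown to have negative part $\g_{-2}+\g_{-1}$ isomorphic to an extended translation algebra, its simplicity and transitivity force it to be the maximal transitive prolongation.

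The discriminating invariant is the $\so(V)$-module structure of $\g_{-1}$, which I would read off for each entry from the semisimple part of $\g_0$ and the node(s) adjacent to the crossed one(s). For an extended translation algebra $\g_{-1}=W$ must be a $\Cl(V)$-module, and when $\dim V$ is even every such module is a direct sum of copies of the irreducible module $S^++S^-$; hence $S^+$ and $S^-$ must occur in $\g_{-1}$ with equal multiplicity. A direct computation shows this balance fails precisely for $B_4$, $B_\ell$, $D_5$, $D_\ell$, $E_7$ and $E_8$, all of which have even $\dim V\in\{6,10,14\}$ and $\g_{-1}$ equal, as an $\so(V)$-module, to a nonzero multiple of a single half-spinor: for example $\g_{-1}\cong 2S^+$ for $D_5$, and for $E_7$ the module $S^+\otimes\C^2$ of $\so(10)\oplus\mathfrak{sl}(2,\C)$, again $2S^+$ over $\so(10)$, while for $E_8$ it is the $64$-dimensional half-spinor of $\so(14)$. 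None of these can be a $\Cl(V)$-module, so they are excluded. The surviving families $A_\ell$, $C_\ell$, $E_6$, $F_4$ have $\dim V\in\{4,3,8,7\}$ and, as I would verify, the balanced decomposition $\g_{-1}\cong N(S^+\oplus S^-)$ in the even cases and $\g_{-1}\cong N\mathbb S$ in the odd ones, as required of a $\Cl(V)$-module.

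It remains to establish realizability for the four surviving families, and this is where the main difficulty lies. For each I would show that $\g_{-2}+\g_{-1}$ is an extended translation algebra. The Clifford module structure on $\g_{-1}$ is partly detectable inside $\g$ itself: by Lemma~\ref{lemmag1} an element $D\in\g_1$ acts on $V$ via $Dv=v\cdot\phi(D)$, so that Clifford multiplication is recorded by the bracket $[\g_1,\g_{-2}]$, and by Proposition~\ref{hanome} its image is $\Cl(V)$-stable. The crux, however, is to prove that the Lie bracket $\Lambda^2\g_{-1}\to\g_{-2}=V$ coincides with $([s,t],v)=\beta(v\cdot s,t)$ for a nondegenerate $\so(V)$-invariant form $\beta$ obeying (B1)--(B3); since the bracket is automatically antisymmetric this places us in the extended (rather than super) regime, i.e. an admissible form with $\sigma\tau=-1$ in the notation of \S\ref{subsub1}. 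Here I would combine the explicit basis of $\so(V)$-invariant tensors in $W^*\otimes W^*\otimes V$ from \cite{AC} with the classification of Clifford modules in \cite{LM}: matching each bracket against the admissible basis identifies $\beta$ and at the same time pins down the multiplicity $N$ and the reductive algebra $\h_0$ acting on the multiplicity space, giving $\mathfrak{sl}(N,\C)\oplus\C$ for $A_\ell$ with $N=\ell-3$, $\mathfrak{sp}(\tfrac N2,\C)$ for $C_\ell$ with $N=2(\ell-2)$, and $\C$ respectively $0$ for $E_6$ and $F_4$ with $N=1$. The delicate point, especially for the exceptional cases $E_6$ and $F_4$, is exactly this last verification: showing that the simple Lie algebra's own bracket is literally the Clifford bracket of an admissible form, and not merely some $\so(V)$-equivariant map with the correct source and target. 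Once $\g_{-2}+\g_{-1}$ is recognized as an extended translation algebra, the appeal to \cite{MN} closes the argument.
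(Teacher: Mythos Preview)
Your proposal is correct and mirrors the paper's proof: narrow Table~\ref{tabella} by requiring $\g_{-1}$ to be a genuine $\Cl(V)$-module (the paper phrases this as ``a multiple of the spinor representation $\mathbb S$'', which for even $\dim V$ is precisely your half-spinor balance criterion), then verify case by case via the admissible forms of \cite{AC} that the survivors' negative parts are extended translation algebras, with the appeal to \cite{MN} closing the loop. The only quibble is that Lemma~\ref{lemmag1} and Proposition~\ref{hanome} are proved under the hypothesis that $\g$ is already the prolongation of an extended translation algebra, so invoking them on a candidate $\g$ before that is established is premature; but since you do not actually rely on them for the verification and proceed, as the paper does, by directly exhibiting the Clifford action and the form $\beta$, this does not affect the argument.
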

In this table, the multiplicity of the spinor representation $\mathbb{S}$ in $\g_{-1}$ is denoted by $N$. 
Moreover, we adopt the convention that the Lie algebra $\mathrm{C}_m=\mathfrak{sp}(m,\mathbb{C})$ acts naturally on $\C^{2m}$.

\begin{proof}
Conditions (1), (2) and (3) above are clearly necessary for $\g$ to be the prolongation of an extended translation algebra.

Among the Lie algebras listed in table \ref{tabella}, we select those such that 
the representation of $\so(n,\C)\subset\g_0$ on $\g_{-1}$ is equivalent to a multiple of the spinor representation $\mathbb S$. In this way we obtain exactly the Lie algebras listed in table \ref{tabellina}.

To conclude the proof, we show in each case that the negatively graded part $\g_{-2}+\g_{-1}$ is isomorphic, as graded Lie algebra, to an extended translation algebra $\m(V,W,\beta)$.
In fact, for suitable identifications $\g_{-1}\simeq\mathbb{S}\oplus\cdots\oplus\mathbb{S}$ and $\g_{-2}=V$, we exhibit a nondegenerate bilinear form $\beta$ on $\g_{-1}$ satisfying (B1), (B2), (B3) and such that the Lie bracket on $\g_{-1}$ is of the form \eqref{equazione_bracket}. 

\paragraph
{($A_\ell$)} As $(\so(4,\mathbb{C})\oplus\mathfrak{sl}(N,\C))$-module, $\g_{-1}$ is isomorphic to $\mathbb{S^+}\otimes \mathbb{C}^{N}+\mathbb{S^-}\otimes (\mathbb{C}^{N})^*$.
The Lie bracket is induced by a $(\so(4,\mathbb{C})\oplus\mathfrak{sl}(N,\C))$-equivariant map
$
(\mathbb{S^+}\otimes \mathbb{C}^{N}) \times (\mathbb{S^-}\otimes (\mathbb{C}^{N})^*)\rightarrow V
$.
Hence, there exists a $\so(4,\mathbb{C})$-equivariant map $\Gamma:\mathbb{S^+}\times \mathbb{S^-}\rightarrow V$ such that
\[
[s^+\otimes c,s^-\otimes c^*]=\Gamma(s^+,s^-)c^*(c)\ ,
\]
for any $s^{\pm}\in\mathbb{S^\pm}$, $c\in\mathbb{C}^N$ and $c^*\in(\mathbb{C}^N)^*$.
By the results of \cite{AC}, the map $\Gamma$ is uniquely determined by a nondegenerate bilinear form $b:\mathbb{S}\times\mathbb{S}\rightarrow\mathbb{C}$ with invariants $(\tau,\sigma,\imath)=(+,-,+)$ by $(\Gamma(s^+,s^-),v)=b(v\circ s^+,s^-)$.
\\
Identifying $\mathbb{C}^N$ and $(\mathbb{C}^N)^*$ via a nondegenerate symmetric bilinear form $\delta$ on $\mathbb{C}^N$,
we define a Clifford multiplication ``$\,\cdot\,$''$\colon\g_{-2}\otimes \g_{-1}\rightarrow \g_{-1}$ by:
\begin{equation}
\label{CliffA}
v\cdot (s\otimes c)=(v\circ s)\otimes c\ ,
\end{equation}
and a nondegenerate $\so(4,\C)$-invariant bilinear form $\beta:\g_{-1}\times\g_{-1}\rightarrow\mathbb{C}$:
\[
\beta(s\otimes c,t\otimes d)=b(s,t)\delta(c,d)\ .
\]
Direct computations show that $\cdot$ and $\beta$ satisfy all the required conditions.
\paragraph
{($C_\ell$)} In this case, $\g_{-1}\simeq\mathbb{S}\otimes \mathbb{C}^{N}$ as a $(\so(3,\mathbb{C})\oplus\mathfrak{sp}(\frac{N}{2},\C))$-module and
the Lie bracket is given by a $(\so(3,\mathbb{C})\oplus\mathfrak{sp}(\frac{N}{2},\C))$-equivariant map
$
\vee^2\mathbb{S}\otimes\wedge^2\mathbb{C}^{N}\rightarrow V
$.
Hence, there exists a $\so(3,\mathbb{C})$-equivariant map $\Gamma:\mathbb{S}\vee \mathbb{S}\rightarrow V$ and a nondegenerate $\mathfrak{sp}(\frac{N}{2},\mathbb{C})$-invariant bilinear form $\omega$ on $\mathbb{C}^N$ such that
\[
[s\otimes c,t\otimes d]=\Gamma(s,t)\omega(c,d)\ ,
\]
for any $s,t\in\mathbb{S}$ and $c,d\in\mathbb{C}^N$. The map $\Gamma$ is uniquely determined by a nondegenerate bilinear form $b:\mathbb{S}\times\mathbb{S}\rightarrow\mathbb{C}$ with invariants $(\tau,\sigma)=(-,-)$ by $(\Gamma(s,t),v)=b(v\circ s,t)$.
\\
We again define a Clifford multiplication by \eqref{CliffA} and a nondegenerate $(\so(3,\mathbb{C})\oplus\mathfrak{sp}(\frac{N}{2},\C))$-invariant bilinear form $\beta=b\otimes\omega:\g_{-1}\times\g_{-1}\rightarrow\mathbb{C}$.
\paragraph
{($E_6$)} 
In this case, $\g_{-1}\simeq\mathbb{S}$ as a $\so(8,\mathbb{C})$-module and
the Lie bracket is uniquely determined by a nondegenerate bilinear form $b:\mathbb{S}\times\mathbb{S}\rightarrow\mathbb{C}$ with invariants $(\tau,\sigma)=(-,+)$ by $([s,t],v)=b(v\circ s,t)$.
\paragraph
{($F_4$)} 
In this case, $\g_{-1}\simeq\mathbb{S}$ as a $\so(7,\mathbb{C})$-module and
the Lie bracket is uniquely determined by a nondegenerate bilinear form $b:\mathbb{S}\times\mathbb{S}\rightarrow\mathbb{C}$ with invariants $(\tau,\sigma)=(-,+)$ by $([s,t],v)=b(v\circ s,t)$.
\end{proof}
We show that in the case $(A_\ell)$ with $N=2$, a different choice of $\beta$ yields the same extended translation algebra.
This result will be needed in the following subsection.
\begin{lemma}
\label{uaoh}
Let $\dim V=4$ and $\mathbb{S}=\mathbb{S}^++\mathbb{S}^-$ be the complex irreducible $\Cl(V)$-module. 
Let $f_E$ and $f$ be the unique (up to constant) bilinear forms on $\mathbb{S}$ with invariants 
$(\tau,\sigma,\imath)(f_E)=(+,-,+)$ and $(\tau,\sigma,\imath)(f)=(-,-,+)$.
\\
Denote by $\phi_j\colon\mathbb{S}\to\mathbb{S}_j$, $j=1,2$, two isomorphisms of $\Cl(V)$-modules.
The extended translation algebras $\mathfrak{m}$ and $\mathfrak{n}$ on $V+(\mathbb{S}_1+\mathbb{S}_2)$, associated with the bilinear forms $\beta_{\mathfrak{m}}$ and $\beta_{\mathfrak{n}}$, given on $\mathbb{S}_1+\mathbb{S}_2$  by
\begin{align*}
&\beta_{\mathfrak{m}}=(f_E\circ\phi_1)+ (f_E\circ\phi_2)\ ,\\
\bigg\{
&\begin{aligned}
  &\beta_{\mathfrak{n}}(\mathbb{S}_1,\mathbb{S}_1)=\beta_{\mathfrak{n}}(\mathbb{S}_2,\mathbb{S}_2)=0\,,\\
  &\beta_{\mathfrak{n}}(\phi_1(s),\phi_2(t))=\beta_{\mathfrak{n}}(\phi_2(t),\phi_1(s))=f(s,t)\,,
\end{aligned}
\end{align*}
are graded-isomorphic via a linear automorphism of $V+(\mathbb{S}_1+\mathbb{S}_2)$ acting trivially on $V$.
\end{lemma}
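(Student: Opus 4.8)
The plan is to translate the statement into a concrete linear-algebra problem on $\mathbb{S}_1+\mathbb{S}_2\simeq\mathbb{S}\oplus\mathbb{S}$ and to exhibit the intertwiner explicitly. Any graded Lie algebra isomorphism $\Psi\colon\mathfrak{m}\to\mathfrak{n}$ acting trivially on $V$ has the form $\Psi=\Id_V\oplus\Phi$ with $\Phi\in\mathfrak{gl}(\mathbb{S}_1+\mathbb{S}_2)$, and since the bracket takes values in $V=\m_{-2}$, the condition that $\Psi$ be an isomorphism is
\[
\beta_{\mathfrak{n}}(v\cdot\Phi x,\Phi y)=\beta_{\mathfrak{m}}(v\cdot x,y)\qquad\text{for all }v\in V,\ x,y\in\mathbb{S}_1+\mathbb{S}_2.
\]
Encoding each side as the $V$-valued form $\gamma(x,y)$ determined by $(\gamma(x,y),v)=\beta(v\cdot x,y)$, I would thus search for an invertible $\Phi$ with $\gamma_{\mathfrak{n}}(\Phi x,\Phi y)=\gamma_{\mathfrak{m}}(x,y)$.

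The decisive structural input is the low-dimensional coincidence in $\dim V=4$: here $\dim\mathbb{S}^{\pm}=2$ and the $\so(V)$-module $\mathbb{S}^+\otimes\mathbb{S}^-$ is irreducible and isomorphic to $V$, so $\mathrm{Hom}_{\so(V)}(\mathbb{S}^+\otimes\mathbb{S}^-,V)$ is one-dimensional. Since $f_E$ and $f$ both have $\imath=+$, they vanish on mixed chirality pairs, and the $\so(V)$-equivariant maps $\Gamma_E,\Gamma_f\colon\mathbb{S}\times\mathbb{S}\to V$ defined by $(\Gamma_E(s,t),v)=f_E(v\circ s,t)$ and $(\Gamma_f(s,t),v)=f(v\circ s,t)$ are off-diagonal in the splitting $\mathbb{S}=\mathbb{S}^+\oplus\mathbb{S}^-$ and are each governed by the unique equivariant map $\Gamma^+\colon\mathbb{S}^+\times\mathbb{S}^-\to V$. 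The invariants then fix the symmetry type: $\tau(f_E)=+$ makes $\Gamma_E$ skew, whereas $\tau(f)=-$ makes $\Gamma_f$ symmetric, so that, with $\Gamma^+$ a fixed generator and up to a relative nonzero constant absorbed into $\Phi$ below,
\[
\Gamma_E(s,t)=\Gamma^+(s^+,t^-)-\Gamma^+(t^+,s^-),\qquad
\Gamma_f(s,t)=\Gamma^+(s^+,t^-)+\Gamma^+(t^+,s^-).
\]

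Transporting $f_E$ and $f$ along the $\Cl(V)$-isomorphisms $\phi_j$ and writing $x=\phi_1 s_1+\phi_2 s_2$, $y=\phi_1 t_1+\phi_2 t_2$, the two brackets become
\[
\gamma_{\mathfrak{m}}(x,y)=\Gamma_E(s_1,t_1)+\Gamma_E(s_2,t_2),\qquad
\gamma_{\mathfrak{n}}(x,y)=\Gamma_f(s_1,t_2)-\Gamma_f(t_1,s_2),
\]
both expressed through the single map $\Gamma^+$, coupling the two copies \emph{within} each copy for $\mathfrak{m}$ and \emph{across} the copies for $\mathfrak{n}$. To match them I would take $\Phi$ chirality-preserving, exchanging the two copies on the plus-parts and acting diagonally with a sign on the minus-parts, namely
\[
\Phi(\phi_1 s_1+\phi_2 s_2)=\phi_1(s_2^+-s_1^-)+\phi_2(s_1^++s_2^-)
\]
(rescaling the minus-part to absorb the relative constant between $\Gamma_E$ and $\Gamma_f$). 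A direct bilinear expansion of $\gamma_{\mathfrak{n}}(\Phi x,\Phi y)$ then reproduces $\gamma_{\mathfrak{m}}(x,y)$ term by term, and $\Phi$ is invertible since it permutes and rescales the four chirality blocks.

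I expect the main difficulty to be conceptual rather than computational. Note that $\beta_{\mathfrak{m}}$ is skew while $\beta_{\mathfrak{n}}$ is symmetric, i.e.\ the two algebras carry opposite values of $\epsilon$; consequently $\Phi$ cannot be a morphism of $\Cl(V)$-modules, and must treat the two half-spinor spaces asymmetrically while mixing the copies. Recognizing that this asymmetry is exactly what the exchange-on-$\mathbb{S}^+$, sign-on-$\mathbb{S}^-$ ansatz provides, and that the one-dimensionality of $\mathrm{Hom}_{\so(V)}(\mathbb{S}^+\otimes\mathbb{S}^-,V)$ in $\dim V=4$ is what lets both brackets be written through a single $\Gamma^+$ (reducing the matching to a solvable system), is the heart of the argument.
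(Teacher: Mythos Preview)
Your proposal is correct and follows essentially the same approach as the paper: both construct the graded isomorphism by swapping the two copies on the $\mathbb{S}^+$-components and acting diagonally (by scalars) on the $\mathbb{S}^-$-components, the paper using the coefficients $(1,i,1,i)$ on $(\mathbb{S}_1^-,\mathbb{S}_2^-,\mathbb{S}_1^+\!\to\!\mathbb{S}_2^+,\mathbb{S}_2^+\!\to\!\mathbb{S}_1^+)$ where you use $(-1,1,1,1)$. The only difference is in the justification---the paper invokes the explicit relation $f_E(\cdot,\cdot)=f(\mathcal{E}\cdot,\cdot)$ with $\mathcal{E}|_{\mathbb{S}^\pm}=\pm\Id$, whereas you argue via the one-dimensionality of $\mathrm{Hom}_{\so(V)}(\mathbb{S}^+\otimes\mathbb{S}^-,V)$; both lead to the same block-by-block verification.
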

\begin{proof}
Define a graded isomorphism $\varphi:\mathfrak{m}\rightarrow\mathfrak{n}$ by $\varphi|_{V}=id$ and
\[
\varphi|_{\mathbb{S}^-_1}=id\,,\quad\varphi|_{\mathbb{S}^-_2}=i\,id\,,\quad\varphi|_{\mathbb{S}^+_1}=\phi_2\circ\phi_1^{-1}\,\quad\varphi|_{\mathbb{S}^+_2}=i\,\phi_1\circ\phi_2^{-1}\ .
\]
To check the statement, recall that $f_E(\cdot,\cdot)=f(\mathcal{E}(\cdot),\cdot)$ with $\mathcal{E}|_{\mathbb{S^\pm}}=\pm id$.
\end{proof}
\par\noindent
\subsection{The real case}
\label{subsub2}
We use the same notation as in \cite{AC}:

\begin{itemize}
\item 
The symbol $S$ will denote an irreducible real $\Cl(V)$-module. When there exist two inequivalent irreducible $\Cl(V)$-modules and we need to distinguish them, we denote them by $S'$ and $S''$. In all other cases, one can safely assume for instance that $S$ is the $\Cl(V)$-module explicitly described in \cite{AC}. Clifford multiplication on $S$ will be denoted by 
``$\,\circ\,$''. 
\item
If ${S}$ is $\so(V)$-reducible, we denote by $S^+$ and $S^-$ its irreducible $\so(V)$-submodules.
\item
Admissible bilinear forms and their invariants are defined as in the complex case.
\end{itemize}

We recall the description of simple real graded Lie algebras (see e.g. \cite{CS, D, GOV}). Let $\g$ be a real simple Lie algebra. Fix a Cartan decomposition $\g=\mathfrak{k}+\mathfrak{p}$, a maximal abelian subspace $\a\subset\mathfrak p$, and a maximal torus $\t$ in the centralizer of $\a$ in $\mathfrak k$. Then $\c=\a+\t$ is a Cartan subalgebra of $\g$.

Denote by $\Delta=\Delta(\g^\C,\c^\C)$ the root system of $\g^\C$ and by $\c_{\R}=\a+\mathrm{i}\t\subset\c^\C$ the real subspace where all the roots have real values. 
Conjugation of $\g^\C$ with respect to the real form $\g$ leaves $\c^\C$ invariant, and induces an 
involution $\alpha\mapsto\bar\alpha$ on $\c_\R^*$, trasforming roots into roots.
We say that a root $\alpha$ is compact if $\bar\alpha=-\alpha$ and denote by $\Delta_\bullet$ the set of compact roots.
There exists a set of positive roots $\Delta^+\subset\Delta$, with corresponding system of simple roots $\Pi$, and an involutive automorphism $\varepsilon\colon\Pi\to\Pi$ of the Dynkin diagram of $\g^\C$, such that $\epsilon(\Pi\setminus\Delta_\bullet)\subseteq \Pi\setminus\Delta_\bullet$ and
\[
\bar\alpha=-\alpha \text{ for all }\alpha\in\Pi\cap\Delta_\bullet \ ,
\]
\[
\bar\alpha=\varepsilon(\alpha)+\sum_{\beta\in\Pi\cap\Delta_\bullet}b_{\alpha,\beta}\beta\text{ for all }\alpha\in\Pi\setminus\Delta_\bullet\ .
\]
The \emph{Satake diagram} of $\g$ is the Dynkin diagram of $\g^\C$ with the following additional information:
\begin{enumerate}
\item
nodes in $\Pi\cap\Delta_\bullet$ are painted black;
\item
if $\alpha\in\Pi\setminus\Delta_\bullet$ and $\varepsilon(\alpha)\neq\alpha$ then $\alpha$ and $\varepsilon(\alpha)$ are joined by a curved arrow.
\end{enumerate}
A list of Satake diagrams can be found in \cite{CS, GOV, O}.

Let $\lambda\in(\c_\R^*)^*\simeq\c_\R$ be 
an element such that the induced gradation on $\g^\C$ is fundamental. Then the gradation on $\g^\C$ induces a gradation on $\g$ if and only if $\bar\lambda=\lambda$, or equivalently the two following conditions  on the set $\Phi=\{\alpha\in\Pi\mid\lambda(\alpha)=1\}$ are satisfied:
\begin{enumerate}
\item
$\Phi\cap\Delta_\bullet=\emptyset$;
\item
if $\alpha\in\Phi$ then $\varepsilon(\alpha)\in\Phi$.
\end{enumerate}
We indicate the gradation on a real Lie algebra by crossing all nodes in $\Phi$.

In the real case too the Lie subalgebra $\g_0$ is reductive and the Satake diagram of its semisimple ideal is obtained from the Satake diagram of $\g$ by removing all crossed nodes, and any line issuing from them.
\medskip

The first step in the classification is giving necessary conditions on the complexification of a simple maximal transitive prolongation of a real extended translation algebra.
\begin{proposition}
Let $\m=\m(V,W,\beta)$ be a real extended translation algebra, with simple maximal transitive prolongation $\g$. Then the complexification $\m^\C=\m(V^\C,W^\C,\beta^\C)$ of $\m$ is a complex extended translation algebra with simple complex maximal transitive prolongation equal to the complexification $\g^\C$ of $\g$.
\end{proposition}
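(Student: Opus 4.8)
The plan is to verify the three assertions about $\m^\C$ and $\g^\C$ in sequence, exploiting the fact that complexification commutes with the constructions involved. First I would check that $\m^\C = \m(V^\C, W^\C, \beta^\C)$ is itself an extended translation algebra. The Clifford algebra of a complexified quadratic space satisfies $\Cl(V^\C) \cong \Cl(V)^\C$, and $W^\C$ is naturally a $\Cl(V^\C)$-module; the form $\beta^\C$ obtained by $\C$-bilinear extension of $\beta$ inherits properties (B1), (B2), (B3) directly, since these are identities and the relevant groups and multiplications complexify compatibly. The bracket defined by \eqref{equazione_bracket} for $\beta^\C$ is precisely the $\C$-bilinear extension of the bracket on $\m$, so $\m^\C$ is the complexification of $\m$ as a graded Lie algebra and is an extended translation algebra in the sense of the definition.

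The heart of the matter is the identification of the maximal transitive prolongation. The key point is that the maximal transitive prolongation is a functor that commutes with field extension: if $\g$ is the maximal transitive prolongation of $\m$, then $\g^\C$ satisfies properties (1)–(3) of the prolongation for $\m^\C$. Indeed, $\g^\C_p = (\g_p)^\C$ is finite-dimensional, agrees with $\m^\C_p$ in degrees $-1,-2$ and vanishes below $-2$, and transitivity is preserved because $[X, (\g^\C)_{-1}] = 0$ for $X \in (\g_p)^\C$ forces $X$ to pair trivially with $\g_{-1}$ after choosing a real basis, hence $X=0$. Thus there is a canonical injection $\g^\C \hookrightarrow (\m^\C)^{\mathrm{prol}}$ into the maximal transitive prolongation of $\m^\C$. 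Conversely, by Theorem \ref{fine} applied to $\m^\C$ (legitimate since $\dim V^\C = \dim V \geq 3$), the prolongation $(\m^\C)^{\mathrm{prol}}$ is finite-dimensional, and by the uniqueness and maximality characterization one must argue that no strictly larger prolongation can appear over $\C$ than the complexification of the one over $\R$.

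The main obstacle will be this last equality $\g^\C = (\m^\C)^{\mathrm{prol}}$, i.e. ruling out that the prolongation grows strictly under complexification. I would handle this by a dimension-counting argument grounded in Theorem \ref{lo0}: the degree-zero part $(\m^\C)^{\mathrm{prol}}_0 = \so(V^\C) \oplus \C E \oplus \h_0^\C$, and a direct computation shows $\h_0(V^\C, W^\C, \beta^\C) = \h_0(V, W, \beta)^\C$ since the defining conditions \eqref{eeeeaaaa} are linear and complexify. For the positive degrees, the embedding $\phi$ of Lemma \ref{lemmag1} and Proposition \ref{hanome}, together with Theorem \ref{fine}(2), show that each $\g_p$ injects into a fixed finite-dimensional space depending only on the $\Cl(V)$-module structure, and these bounds are stable under complexification; a prolongation element over $\C$ is determined by its action on $V^\C$, which lies in the complexification of the corresponding real space. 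Finally, simplicity of $\g^\C$ follows because a simple real Lie algebra has complexification that is either simple or a sum of two conjugate simple ideals; the presence of the grading element and the $\so(V^\C)$-irreducibility of $V^\C = \g^\C_{-2}$ (for $\dim V \geq 3$) rule out the second possibility exactly as in the proof of Theorem \ref{nonloso}, forcing $\g^\C$ to be simple.
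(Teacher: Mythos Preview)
Your verification that $\m^\C$ is a complex extended translation algebra and your argument for the simplicity of $\g^\C$ are both sound; the latter is essentially the paper's Lemma~\ref{blublublu}. The gap is in the step establishing $(\m^\C)^{\mathrm{prol}} = \g^\C$. Your degree-$0$ argument is fine, since the conditions \eqref{eeeeaaaa} defining $\h_0$ are $\R$-linear and hence complexify. But the positive-degree part is not an argument: Lemma~\ref{lemmag1} and Proposition~\ref{hanome} concern only $\g_1$, and even there they give an embedding $\phi\colon\g_1\hookrightarrow W$ onto some $\Cl(V)$-submodule, not a formula for $\dim\g_1$ that you could compare between $\R$ and $\C$. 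The sentence ``a prolongation element over $\C$ is determined by its action on $V^\C$, which lies in the complexification of the corresponding real space'' does not show that every element of $(\m^\C)^{\mathrm{prol}}_p$ already lies in $(\g_p)^\C$; at best it bounds both sides by the same ambient space. To make your route work you would need an inductive argument that the linear system cutting out the prolongation in each degree $p$ (given the lower degrees) is defined over $\R$, and you have not set this up.

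The paper bypasses all of this with a short Galois-descent argument (Lemma~\ref{blablabla}): the conjugation $\sigma$ of $\m^\C$ with respect to $\m$ extends uniquely to an antilinear involution $\tilde\sigma$ of the complex maximal prolongation $\tilde\g$, and the fixed-point set $\tilde\g^{\tilde\sigma}$ is a real transitive prolongation of $\m$, hence contained in $\g$ by maximality; thus $\tilde\g = (\tilde\g^{\tilde\sigma})^\C \subset \g^\C$, giving equality. This uses no structure theory of $\g$ whatsoever and applies to any fundamental $\m$, which is why it is preferable to a degree-by-degree dimension count.
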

\begin{proof}
The assertion that $\m^\C$ is a complex extended translation algebra is straightforward. The proposition then easily follows from Lemmata \ref{blablabla} and \ref{blublublu} below.
\end{proof}

\begin{lemma}\label{blablabla}
If $\m=\m_{-2}+\m_{-1}$ is a real fundamental graded Lie algebra and $\g$ is its maximal transitive prolongation, then the complexification  $\g^\C$ of $\g$ is the complex maximal transitive prolongation of the complexification $\m^\C$ of $\m$.
\end{lemma}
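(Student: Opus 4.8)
The plan is to verify that $\g^\C$, with the grading $(\g^\C)_p=(\g_p)^\C$, satisfies the four defining properties of the complex maximal transitive prolongation of $\m^\C$; properties (1)--(3) are routine, and maximality will be forced by a dimension count using the real structure. First I would check (1)--(3). Finite-dimensionality and the identifications in degrees $-1,-2$ pass to the complexification at once. For transitivity, given $X=X_1+\mathrm{i}X_2\in(\g^\C)_p$ with $X_1,X_2\in\g_p$ and $[X,(\g^\C)_{-1}]=0$, evaluating on a real argument $s\in\g_{-1}\subset(\g^\C)_{-1}$ gives $[X,s]=[X_1,s]+\mathrm{i}[X_2,s]=0$, and splitting into real and imaginary parts in $\g$ yields $[X_1,\g_{-1}]=[X_2,\g_{-1}]=0$; transitivity of $\g$ then forces $X_1=X_2=0$. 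Writing $\tilde\g$ for the complex maximal transitive prolongation of $\m^\C$, its maximality produces an injective homomorphism of graded Lie algebras $\iota\colon\g^\C\hookrightarrow\tilde\g$, so $\dim_\C(\g^\C)_p\le\dim_\C\tilde\g_p$ for all $p$.

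To obtain the reverse inequality I would exploit conjugation. The conjugation $\sigma$ of $\m^\C$ fixing $\m$ is an anti-linear involutive automorphism of the graded Lie algebra $\m^\C$, equivalently a complex-linear isomorphism $\m^\C\to\overline{\m^\C}$ onto the conjugate Lie algebra. Since the Tanaka prolongation is built functorially from the bracket (\cite{N1,N2}), it lifts complex-linear isomorphisms and commutes with conjugation ($\overline{\tilde\g}$ being the prolongation of $\overline{\m^\C}$); combining these, $\sigma$ lifts to an anti-linear involutive automorphism $\tilde\sigma$ of $\tilde\g$ that extends $\sigma$ in degrees $-1,-2$. The fixed-point algebra $\tilde\g^{\tilde\sigma}$ is then a real form of $\tilde\g$, and one checks directly that it is a real transitive prolongation of $\m=(\m^\C)^\sigma$ satisfying (1)--(3). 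Maximality of $\g$ now gives an injection $\tilde\g^{\tilde\sigma}\hookrightarrow\g$, whence $\dim_\C\tilde\g_p=\dim_\R(\tilde\g^{\tilde\sigma})_p\le\dim_\R\g_p=\dim_\C(\g^\C)_p$.

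Comparing the two estimates forces $\dim_\C(\g^\C)_p=\dim_\C\tilde\g_p$ in every degree, so the injection $\iota$ is a graded isomorphism and $\g^\C=\tilde\g$, as claimed.

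I expect the only genuine subtlety to be the lifting of $\sigma$ to $\tilde\sigma$: one must ensure that the recursive description of the prolongation accommodates an anti-linear symmetry. This is precisely why I would reformulate $\sigma$ as a complex-linear isomorphism onto the conjugate algebra $\overline{\m^\C}$ and invoke both the naturality of the construction under linear isomorphisms and its compatibility with complex conjugation. That $\tilde\sigma^2=\mathrm{id}$, so that its fixed points genuinely form a real form of the expected dimension $\dim_\R(\tilde\g^{\tilde\sigma})_p=\dim_\C\tilde\g_p$, follows from the uniqueness of such lifts together with $\sigma^2=\mathrm{id}$.
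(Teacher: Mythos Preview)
Your proof is correct and follows essentially the same route as the paper: both embed $\g^\C$ into the complex prolongation $\tilde\g$ of $\m^\C$, lift the conjugation on $\m^\C$ to an antilinear involution $\tilde\sigma$ of $\tilde\g$, and use that the fixed points $\tilde\g^{\tilde\sigma}$ embed into $\g$ to conclude. The only cosmetic difference is that the paper phrases the final step as a containment $\tilde\g=(\tilde\g^{\tilde\sigma})^\C\subset\g^\C$ rather than your dimension comparison, and leaves the lifting of $\sigma$ implicit where you justify it via functoriality.
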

\begin{proof}
Let $\tilde\g$ be the complex maximal transitive prolongation of $\m^\C$ . Since $\g^\C$ is a complex transitive prolongation of $\m^\C$, it is clear that $\g^\C\subset\tilde\g$.

Let $\sigma\colon\m^\C\to\m^\C$ be the antilinear involution given by complex conjugation with respect to the real form $\m$. Then $\sigma$ uniquely extends to an antilinear involution $\tilde\sigma\colon\tilde\g\to\tilde\g$. The set $\tilde\g^{\tilde\sigma}$ of fixed points of $\tilde\sigma$ in $\tilde\g$ is a transitive prolongation of $\m$, hence it is contained in $\g$.
Since $\tilde\g=(\tilde\g^{\tilde\sigma})^\C$, we obtain $\tilde\g\subset\g^\C$ , and finally $\tilde\g=\g^\C$.
\end{proof}

\begin{lemma}\label{blublublu}
If $\m=\m(V,W,\beta)$ is a real extended translation algebra and  its maximal transitive prolongation $\g$ is simple then the complexification $\g^\C$ of $\g$ is also simple.
\end{lemma}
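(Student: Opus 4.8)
The plan is to argue by contradiction. Since $\g$ is real and simple, its complexification $\g^\C$ is semisimple; if it fails to be simple, I can write $\g^\C=\mathfrak a\oplus\mathfrak b$ as a direct sum of two nonzero ideals, and I will derive a contradiction by showing that one of them must vanish. The argument runs parallel to the proof of simplicity in Theorem \ref{nonloso}, the new input being only the irreducibility of $\g^\C_{-2}=V^\C$ as a module over $\so(V^\C)$. Note that by Lemma \ref{blablabla} the algebra $\g^\C$ is the maximal transitive prolongation of $\m^\C=\m(V^\C,W^\C,\beta^\C)$, so the transitivity property and Theorem \ref{fine}(2) are available for $\g^\C$.

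First I would record that both $\mathfrak a$ and $\mathfrak b$ are graded ideals: they are stable under $\ad(E)$ for the grading element $E\in\g_0\subset\g^\C_0$, so $\mathfrak a=\sum_p\mathfrak a_p$ and $\mathfrak b=\sum_p\mathfrak b_p$. Because $\so(V^\C)=\so(V)^\C\subset\g^\C_0$ acts on $\g^\C$ by inner derivations, the degree $-2$ pieces $\mathfrak a_{-2}$ and $\mathfrak b_{-2}$ are $\so(V^\C)$-submodules of $V^\C=\g^\C_{-2}$. Since $\dim V\geq3$, the space $V^\C=\C^{\dim V}$ is the irreducible standard $\so(\dim V,\C)$-module, so one of the two summands is zero; say $\mathfrak b_{-2}=0$ and $\mathfrak a_{-2}=V^\C$.

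Next I would propagate this vanishing through all degrees, using $[\mathfrak a,\mathfrak b]=0$. For $p>0$ one has $[\mathfrak b_p,\g^\C_{-2}]=[\mathfrak b_p,\mathfrak a_{-2}]\subseteq[\mathfrak b,\mathfrak a]=0$, so Theorem \ref{fine}(2) gives $\mathfrak b_p=0$. In degree $-1$, the bracket $[\mathfrak b_{-1},\g^\C_{-1}]$ splits as $[\mathfrak b_{-1},\mathfrak a_{-1}]\subseteq[\mathfrak a,\mathfrak b]=0$ and $[\mathfrak b_{-1},\mathfrak b_{-1}]\subseteq\mathfrak b_{-2}=0$; the bracket formula \eqref{equazione_bracket} together with the non-degeneracy of $\beta^\C$ then forces $\mathfrak b_{-1}=0$. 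Finally $[\mathfrak b_0,\g^\C_{-1}]=[\mathfrak b_0,\mathfrak a_{-1}]\subseteq[\mathfrak a,\mathfrak b]=0$, and transitivity of $\g^\C$ yields $\mathfrak b_0=0$. Hence $\mathfrak b=0$, a contradiction, and $\g^\C$ is simple.

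I expect the only genuinely delicate point to be the very first reduction: establishing that $V^\C$ remains $\so(V^\C)$-irreducible for every $\dim V\geq 3$, including the cases $\dim V=3,4$ where $\so(V^\C)$ is not simple but the standard module ($\C^3$, resp. $\C^2\otimes\C^2$) is still irreducible. Equivalently, one may phrase the obstruction structurally: a non-simple $\g^\C$ would endow $\g$ with an invariant complex structure $J$ acting as $\pm i$ on $\mathfrak a,\mathfrak b$, which, commuting with $\ad(E)$ and with $\so(V)$, would restrict to an $\so(V)$-equivariant complex structure on $V$; absolute irreducibility of $V$ makes $\mathrm{End}_{\so(V)}(V)=\R\,\Id$, which cannot contain a square root of $-\Id$. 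Either way, the crux is the absence of an $\so(V)$-invariant complex structure on the standard module $V$, which is precisely what $\dim V\geq3$ guarantees.
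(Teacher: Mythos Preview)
Your proposal is correct and follows essentially the same route as the paper: both arguments hinge on the $\so(V^\C)$-irreducibility of $V^\C$ for $\dim V\geq 3$, and then rerun the simplicity portion of the proof of Theorem~\ref{nonloso} for $\g^\C$. The only cosmetic difference is in killing the positive-degree pieces of the spurious ideal: the paper (via Theorem~\ref{nonloso}) invokes nondegeneracy of the Killing form to get $\g''_2=0$, whereas you use Theorem~\ref{fine}(2); both are valid and the remaining steps (nondegeneracy of $\beta^\C$ in degree $-1$, transitivity in degree $0$) coincide. Your closing remark recasting the obstruction as the nonexistence of an $\so(V)$-equivariant complex structure on $V$ is a nice alternative phrasing of exactly the same irreducibility input.
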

\begin{proof}
We remind that we assumed $\dim V\geq 3$, hence $V$ is $\so(V)$-irreducible and $V^\C$ is $\so(V^\C)$-irreducible. 
This in particular implies that $\g_{-2}$ (resp. $\g^\C_{-2}$) is $\g_0$-irreducible (resp. $\g_0^\C$-irreducible). 
If $\g^\C$ is not simple then, arguing as in the proof of Theorem \ref{nonloso}, one gets that $\g_{-2}$ is not $\g_0$-irreducible, and hence a contradiction.
\end{proof}
\enlargethispage{3mm}

Table \ref{tabellona} lists the real graded simple Lie algebras $\g$ such that $\g^\C$ is the maximal transitive prolongation of a complex extended translation algebra.

\noindent

\begin{table}[H]
\begin{tabular}{|c|c|c|c|c|c|}
\hline
\hline
& $\g$ & $\begin{gathered}\mathrm{sign}\,\g_{-2}\\(+,-)\end{gathered}$ & $\dim\g_{-1}$ & $\h_0$\\
\hline
\hline

$\begin{gathered}\mathrm{A_\ell I}\\\ell\geq4\end{gathered}$&
\begin{tikzpicture}
\node[root]   (1)                     {};
\node[xroot] (2) [right=of 1] {} edge [-] (1);
\node[]   (3) [right=of 2] {$\;\cdots\,$} edge [-] (2);
\node[xroot]   (4) [right=of 3] {} edge [-] (3);
\node[root]   (5) [right=of 4] {} edge [-] (4);
\end{tikzpicture}&
$(2,2)$ & $4(\ell-3)$ &$\mathfrak{sl}(\ell-3,\R)\oplus\R$
\\
\hline

$\begin{gathered}\mathrm{A_\ell II}\\\ell\geq5\\\ell\text{ odd}\end{gathered}$&
\begin{tikzpicture}
\node[broot]   (1)                     {};
\node[xroot] (2) [right=of 1] {} edge [-] (1);
\node[]   (3) [right=of 2] {$\;\cdots\,$} edge [-] (2);
\node[xroot]   (4) [right=of 3] {} edge [-] (3);
\node[broot]   (5) [right=of 4] {} edge [-] (4);
\end{tikzpicture}&
$\begin{gathered}(4,0)\\(0,4)\end{gathered}$ & $4(\ell-3)$ &$\mathfrak{sl}(\ell-3/2,\mathbb H)\oplus\R$
\\
\hline

$\begin{gathered}\mathrm{A_\ell III}\\\ell\geq4\end{gathered}$&
\begin{tikzpicture}
\node[root]   (1)                     {}; 
\node[xroot] (2) [right=of 1] {} edge [-] (1);
\node[]   (3) [right=of 2] {$\;\cdots\,$} edge [-] (2);
\node[xroot]   (4) [right=of 3] {} edge [-] (3) edge [<->,out=150, in=30] (2);
\node[root]   (5) [right=of 4] {} edge [-] (4) edge [<->,out=145, in=35] (1);
\end{tikzpicture}&
$(3,1)$ & $4(\ell-3)$ & $\begin{gathered}\mathfrak{su}(p,q)\oplus\R\\ p\leq q,\, p+q=\ell-3\end{gathered}$
\\
\hline

$\begin{gathered}\mathrm{A_\ell III}\\\ell\geq4\end{gathered}$&
\begin{tikzpicture}
\node[root]   (1)                     {}; 
\node[xroot] (2) [right=of 1] {} edge [-] (1);
\node[]   (3) [right=of 2] {$\;\cdots\,$} edge [-] (2);
\node[xroot]   (4) [right=of 3] {} edge [-] (3) edge [<->,out=150, in=30] (2);
\node[root]   (5) [right=of 4] {} edge [-] (4) edge [<->,out=145, in=35] (1);
\end{tikzpicture}&
$(1,3)$ & $4(\ell-3)$ & $\begin{gathered}\mathfrak{su}(p,q)\oplus\R\\ p\leq q,\, p+q=\ell-3\end{gathered}$
\\
\hline

$\begin{gathered}\mathrm{C_\ell I}\\\ell\geq3\end{gathered}$&
\begin{tikzpicture}
\node[root]   (1)                     {};
\node[xroot] (2) [right=of 1] {} edge [-] (1);
\node[]   (3) [right=of 2] {$\;\cdots\,$} edge [-] (2);
\node[root]   (5) [right=of 3] {} edge [doublearrow] (3);
\end{tikzpicture}&
$(2,1)$ & $4(\ell-2)$ & $\mathfrak{sp}(\ell-2,\R)$
\\
\hline

$\begin{gathered}\mathrm{C_\ell I}\\\ell\geq3\end{gathered}$&
\begin{tikzpicture}
\node[root]   (1)                     {};
\node[xroot] (2) [right=of 1] {} edge [-] (1);
\node[]   (3) [right=of 2] {$\;\cdots\,$} edge [-] (2);
\node[root]   (5) [right=of 3] {} edge [doublearrow] (3);
\end{tikzpicture}&
$(1,2)$ & $4(\ell-2)$ & $\mathfrak{sp}(\ell-2,\R)$
\\
\hline

$\begin{gathered}\mathrm{C_\ell II}\\\ell\geq3\end{gathered}$&
$\begin{gathered}
\begin{tikzpicture}
\node[broot]   (1)                     {};
\node[xroot] (2) [right=of 1] {} edge [-] (1);
\node[]   (3) [right=of 2] {$\;\cdots\,$} edge [-] (2);
\node[broot]   (5) [right=of 3] {} edge [doublearrow] (3);
\end{tikzpicture}\\
\begin{tikzpicture}
\node[broot]   (1)                     {};
\node[xroot] (2) [right=of 1] {} edge [-] (1);
\node[]   (3) [right=of 2] {$\;\cdots\,$} edge [-] (2);
\node[root]   (5) [right=of 3] {} edge [doublearrow] (3);
\end{tikzpicture}\end{gathered}$&
$(3,0)$ & $4(\ell-2)$ & $\begin{gathered}\mathfrak{sp}(p,q)\\p\leq q, \,p+q=\ell-2\end{gathered}$
\\
\hline

$\begin{gathered}\mathrm{C_\ell II}\\\ell\geq3\end{gathered}$&
$\begin{gathered}
\begin{tikzpicture}
\node[broot]   (1)                     {};
\node[xroot] (2) [right=of 1] {} edge [-] (1);
\node[]   (3) [right=of 2] {$\;\cdots\,$} edge [-] (2);
\node[broot]   (5) [right=of 3] {} edge [doublearrow] (3);
\end{tikzpicture}\\
\begin{tikzpicture}
\node[broot]   (1)                     {};
\node[xroot] (2) [right=of 1] {} edge [-] (1);
\node[]   (3) [right=of 2] {$\;\cdots\,$} edge [-] (2);
\node[root]   (5) [right=of 3] {} edge [doublearrow] (3);
\end{tikzpicture}\end{gathered}$&
$(0,3)$ & $4(\ell-2)$ & $\begin{gathered}\mathfrak{sp}(p,q)\\p\leq q, \,p+q=\ell-2\end{gathered}$
\\
\hline

$\mathrm{E\,I}$&
\begin{tikzpicture}
\node[xroot]   (1)                     {};
\node[root] (2) [right=of 1] {} edge [-] (1);
\node[root]   (3) [right=of 2] {} edge [-] (2);
\node[root]   (4) [right=of 3] {} edge [-] (3);
\node[xroot]   (5) [right=of 4] {} edge [-] (4);
\node[root]   (6) [below=of 3] {} edge [-] (3);
\end{tikzpicture}&
$(4,4)$ & $16$ & $\R$
\\
\hline

$\mathrm{E\,II}$&
\begin{tikzpicture}
\node[xroot]   (1)                     {};
\node[root] (2) [right=of 1] {} edge [-] (1);
\node[root]   (3) [right=of 2] {} edge [-] (2);
\node[root]   (4) [right=of 3] {} edge [-] (3)  edge [<->,out=150, in=30] (2);
\node[xroot]   (5) [right=of 4] {} edge [-] (4)  edge [<->,out=145, in=35] (1);
\node[root]   (6) [below=of 3] {} edge [-] (3);
\end{tikzpicture}&
$(5,3)$ & $16$ & $\R$
\\
\hline

$\mathrm{E\,II}$&
\begin{tikzpicture}
\node[xroot]   (1)                     {};
\node[root] (2) [right=of 1] {} edge [-] (1);
\node[root]   (3) [right=of 2] {} edge [-] (2);
\node[root]   (4) [right=of 3] {} edge [-] (3)  edge [<->,out=150, in=30] (2);
\node[xroot]   (5) [right=of 4] {} edge [-] (4)  edge [<->,out=145, in=35] (1);
\node[root]   (6) [below=of 3] {} edge [-] (3);
\end{tikzpicture}&
$(3,5)$ & $16$ & $\R$
\\
\hline

$\mathrm{E\,III}$&
\begin{tikzpicture}
\node[xroot]   (1)                     {};
\node[broot] (2) [right=of 1] {} edge [-] (1);
\node[broot]   (3) [right=of 2] {} edge [-] (2);
\node[broot]   (4) [right=of 3] {} edge [-] (3);
\node[xroot]   (5) [right=of 4] {} edge [-] (4)  edge [<->,out=150, in=30] (1);
\node[root]   (6) [below=of 3] {} edge [-] (3);
\end{tikzpicture}&
$(7,1)$ & $16$ & $\R$
\\
\hline

$\mathrm{E\,III}$&
\begin{tikzpicture}
\node[xroot]   (1)                     {};
\node[broot] (2) [right=of 1] {} edge [-] (1);
\node[broot]   (3) [right=of 2] {} edge [-] (2);
\node[broot]   (4) [right=of 3] {} edge [-] (3);
\node[xroot]   (5) [right=of 4] {} edge [-] (4)  edge [<->,out=150, in=30] (1);
\node[root]   (6) [below=of 3] {} edge [-] (3);
\end{tikzpicture}&
$(1,7)$ & $16$ & $\R$
\\
\hline

$\mathrm{E\,IV}$&
\begin{tikzpicture}
\node[xroot]   (1)                     {};
\node[broot] (2) [right=of 1] {} edge [-] (1);
\node[broot]   (3) [right=of 2] {} edge [-] (2);
\node[broot]   (4) [right=of 3] {} edge [-] (3);
\node[xroot]   (5) [right=of 4] {} edge [-] (4);
\node[broot]   (6) [below=of 3] {} edge [-] (3);
\end{tikzpicture}&
$\begin{gathered}(8,0)\\(0,8)\end{gathered}$ & $16$ & $\R$
\\
\hline

$\mathrm{F\,I}$&
\begin{tikzpicture}
\node[root]   (1)                     {};
\node[root] (2) [right=of 1] {} edge [-] (1);
\node[root]   (3) [right=of 2] {} edge [rdoublearrow] (2);
\node[xroot]   (4) [right=of 3] {} edge [-] (3);
\end{tikzpicture}&
$(4,3)$ & $8$ & $0$
\\
\hline 

$\mathrm{F\,I}$&
\begin{tikzpicture}
\node[root]   (1)                     {};
\node[root] (2) [right=of 1] {} edge [-] (1);
\node[root]   (3) [right=of 2] {} edge [rdoublearrow] (2);
\node[xroot]   (4) [right=of 3] {} edge [-] (3);
\end{tikzpicture}&
$(3,4)$ & $8$ & $0$
\\
\hline 

$\mathrm{F\,II}$&
\begin{tikzpicture}
\node[broot]   (1)                     {};
\node[broot] (2) [right=of 1] {} edge [-] (1);
\node[broot]   (3) [right=of 2] {} edge [rdoublearrow] (2);
\node[xroot]   (4) [right=of 3] {} edge [-] (3);
\end{tikzpicture}&
$(7,0)$ & $8$ & $0$
\\
\hline 

$\mathrm{F\,II}$&
\begin{tikzpicture}
\node[broot]   (1)                     {};
\node[broot] (2) [right=of 1] {} edge [-] (1);
\node[broot]   (3) [right=of 2] {} edge [rdoublearrow] (2);
\node[xroot]   (4) [right=of 3] {} edge [-] (3);
\end{tikzpicture}&
$(0,7)$ & $8$ & $0$
\\
\hline 

\hline
\end{tabular}
\caption{}
\label{tabellona}
\end{table}

We prove now the main classification result of this section.
\begin{theorem}
\label{abcdefg}
The simple real graded Lie algebras $\g$ that are the maximal transitive prolongations of a real extended translation algebra are the following:
\begin{table}[H]
\begin{tabular}{|c|c|c|c|c|c|c|}
\hline
\hline
& $\g$ & $\begin{gathered}\mathrm{sign}\,\g_{-2}\\(+,-)\end{gathered}$ & $\dim\g_{-1}$ & $N$ & $\h_0$\\
\hline
\hline

$\begin{gathered}\mathrm{A_\ell I}\\\ell\geq4\end{gathered}$&
\begin{tikzpicture}
\node[root]   (1)                     {};
\node[xroot] (2) [right=of 1] {} edge [-] (1);
\node[]   (3) [right=of 2] {$\;\cdots\,$} edge [-] (2);
\node[xroot]   (4) [right=of 3] {} edge [-] (3);
\node[root]   (5) [right=of 4] {} edge [-] (4);
\end{tikzpicture}&
$(2,2)$ & $4N$ & $\ell-3$ &$\mathfrak{sl}(N,\R)\oplus\R$
\\
\hline

$\begin{gathered}\mathrm{A_\ell II}\\\ell\geq5\\\ell\text{ odd}\end{gathered}$&
\begin{tikzpicture}
\node[broot]   (1)                     {};
\node[xroot] (2) [right=of 1] {} edge [-] (1);
\node[]   (3) [right=of 2] {$\;\cdots\,$} edge [-] (2);
\node[xroot]   (4) [right=of 3] {} edge [-] (3);
\node[broot]   (5) [right=of 4] {} edge [-] (4);
\end{tikzpicture}&
$\begin{gathered}(4,0)\\(0,4)\end{gathered}$ & $8N$ & $\frac{\ell-3}{2}$ &$\mathfrak{sl}(N,\mathbb H)\oplus\R$
\\
\hline

$\begin{gathered}\mathrm{A_\ell III}\\\ell\geq4\\\ell\text{ odd}\end{gathered}$&
\begin{tikzpicture}
\node[root]   (1)                     {}; 
\node[xroot] (2) [right=of 1] {} edge [-] (1);
\node[]   (3) [right=of 2] {$\;\cdots\,$} edge [-] (2);
\node[xroot]   (4) [right=of 3] {} edge [-] (3) edge [<->,out=150, in=30] (2);
\node[root]   (5) [right=of 4] {} edge [-] (4) edge [<->,out=145, in=35] (1);
\end{tikzpicture}&
$(3,1)$ & $8N$ & $\frac{\ell-3}{2}$ & $\begin{gathered}\mathfrak{su}(p,q)\oplus\R\\ p\leq q,\, p+q=2N\end{gathered}$
\\
\hline

$\begin{gathered}\mathrm{A_\ell III}\\\ell\geq4\end{gathered}$&
\begin{tikzpicture}
\node[root]   (1)                     {}; 
\node[xroot] (2) [right=of 1] {} edge [-] (1);
\node[]   (3) [right=of 2] {$\;\cdots\,$} edge [-] (2);
\node[xroot]   (4) [right=of 3] {} edge [-] (3) edge [<->,out=150, in=30] (2);
\node[root]   (5) [right=of 4] {} edge [-] (4) edge [<->,out=145, in=35] (1);
\end{tikzpicture}&
$(1,3)$ & $4N$ & $\ell-3$ & $\begin{gathered}\mathfrak{su}(p,q)\oplus\R\\ p\leq q,\, p+q=N\end{gathered}$
\\
\hline

$\begin{gathered}\mathrm{C_\ell I}\\\ell\geq3\end{gathered}$&
\begin{tikzpicture}
\node[root]   (1)                     {};
\node[xroot] (2) [right=of 1] {} edge [-] (1);
\node[]   (3) [right=of 2] {$\;\cdots\,$} edge [-] (2);
\node[root]   (5) [right=of 3] {} edge [doublearrow] (3);
\end{tikzpicture}&
$(2,1)$ & $4N$ & $\ell-2$ & $\mathfrak{sp}(N,\R)$
\\
\hline

$\begin{gathered}\mathrm{C_\ell I}\\\ell\geq3\end{gathered}$&
\begin{tikzpicture}
\node[root]   (1)                     {};
\node[xroot] (2) [right=of 1] {} edge [-] (1);
\node[]   (3) [right=of 2] {$\;\cdots\,$} edge [-] (2);
\node[root]   (5) [right=of 3] {} edge [doublearrow] (3);
\end{tikzpicture}&
$(1,2)$ & $2N$ & $2(\ell-2)$ & $\mathfrak{sp}(\frac{N}{2},\R)$
\\
\hline

$\begin{gathered}\mathrm{C_\ell II}\\\ell\geq3\end{gathered}$&
$\begin{gathered}
\begin{tikzpicture}
\node[broot]   (1)                     {};
\node[xroot] (2) [right=of 1] {} edge [-] (1);
\node[]   (3) [right=of 2] {$\;\cdots\,$} edge [-] (2);
\node[broot]   (5) [right=of 3] {} edge [doublearrow] (3);
\end{tikzpicture}\\
\begin{tikzpicture}
\node[broot]   (1)                     {};
\node[xroot] (2) [right=of 1] {} edge [-] (1);
\node[]   (3) [right=of 2] {$\;\cdots\,$} edge [-] (2);
\node[root]   (5) [right=of 3] {} edge [doublearrow] (3);
\end{tikzpicture}\end{gathered}$&
$(3,0)$ & $4N$ & $\ell-2$ & $\begin{gathered}\mathfrak{sp}(p,q)\\p\leq q, \,p+q=N\end{gathered}$
\\
\hline

$\begin{gathered}\mathrm{C_\ell II}\\\ell\geq3\end{gathered}$&
$\begin{gathered}
\begin{tikzpicture}
\node[broot]   (1)                     {};
\node[xroot] (2) [right=of 1] {} edge [-] (1);
\node[]   (3) [right=of 2] {$\;\cdots\,$} edge [-] (2);
\node[broot]   (5) [right=of 3] {} edge [doublearrow] (3);
\end{tikzpicture}\\
\begin{tikzpicture}
\node[broot]   (1)                     {};
\node[xroot] (2) [right=of 1] {} edge [-] (1);
\node[]   (3) [right=of 2] {$\;\cdots\,$} edge [-] (2);
\node[root]   (5) [right=of 3] {} edge [doublearrow] (3);
\end{tikzpicture}\end{gathered}$&
$(0,3)$ & $4N$ & $\ell-2$ & $\begin{gathered}\mathfrak{sp}(p,q)\\p\leq q, \,p+q=N\end{gathered}$
\\
\hline

$\mathrm{E\,I}$&
\begin{tikzpicture}
\node[xroot]   (1)                     {};
\node[root] (2) [right=of 1] {} edge [-] (1);
\node[root]   (3) [right=of 2] {} edge [-] (2);
\node[root]   (4) [right=of 3] {} edge [-] (3);
\node[xroot]   (5) [right=of 4] {} edge [-] (4);
\node[root]   (6) [below=of 3] {} edge [-] (3);
\end{tikzpicture}&
$(4,4)$ & $16$ & $1$ & $\R$
\\
\hline

$\mathrm{E\,II}$&
\begin{tikzpicture}
\node[xroot]   (1)                     {};
\node[root] (2) [right=of 1] {} edge [-] (1);
\node[root]   (3) [right=of 2] {} edge [-] (2);
\node[root]   (4) [right=of 3] {} edge [-] (3)  edge [<->,out=150, in=30] (2);
\node[xroot]   (5) [right=of 4] {} edge [-] (4)  edge [<->,out=145, in=35] (1);
\node[root]   (6) [below=of 3] {} edge [-] (3);
\end{tikzpicture}&
$(3,5)$ & $16$ & $1$ & $\R$
\\
\hline

$\mathrm{E\,III}$&
\begin{tikzpicture}
\node[xroot]   (1)                     {};
\node[broot] (2) [right=of 1] {} edge [-] (1);
\node[broot]   (3) [right=of 2] {} edge [-] (2);
\node[broot]   (4) [right=of 3] {} edge [-] (3);
\node[xroot]   (5) [right=of 4] {} edge [-] (4)  edge [<->,out=150, in=30] (1);
\node[root]   (6) [below=of 3] {} edge [-] (3);
\end{tikzpicture}&
$(7,1)$ & $16$ & $1$ & $\R$
\\
\hline

$\mathrm{E\,IV}$&
\begin{tikzpicture}
\node[xroot]   (1)                     {};
\node[broot] (2) [right=of 1] {} edge [-] (1);
\node[broot]   (3) [right=of 2] {} edge [-] (2);
\node[broot]   (4) [right=of 3] {} edge [-] (3);
\node[xroot]   (5) [right=of 4] {} edge [-] (4);
\node[broot]   (6) [below=of 3] {} edge [-] (3);
\end{tikzpicture}&
$\begin{gathered}(8,0)\\(0,8)\end{gathered}$ & $16$ & $1$ & $\R$
\\
\hline

$\mathrm{F\,I}$&
\begin{tikzpicture}
\node[root]   (1)                     {};
\node[root] (2) [right=of 1] {} edge [-] (1);
\node[root]   (3) [right=of 2] {} edge [rdoublearrow] (2);
\node[xroot]   (4) [right=of 3] {} edge [-] (3);
\end{tikzpicture}&
$(3,4)$ & $8$ & $1$ & $0$
\\
\hline 

$\mathrm{F\,II}$&
\begin{tikzpicture}
\node[broot]   (1)                     {};
\node[broot] (2) [right=of 1] {} edge [-] (1);
\node[broot]   (3) [right=of 2] {} edge [rdoublearrow] (2);
\node[xroot]   (4) [right=of 3] {} edge [-] (3);
\end{tikzpicture}&
$(7,0)$ & $8$ & $1$ & $0$
\\
\hline 
\hline
\end{tabular}
\caption{}
\label{tabella_reale}
\end{table}
\end{theorem}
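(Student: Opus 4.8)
The plan is to combine the necessary condition already obtained from complexification with a case-by-case verification driven by the real representation theory of Clifford algebras. By the Proposition preceding the statement (via Lemmata \ref{blablabla} and \ref{blublublu}), if $\g$ is the simple maximal transitive prolongation of a real extended translation algebra $\m=\m(V,W,\beta)$, then $\g^\C$ is the simple complex prolongation of $\m^\C$; by Theorem \ref{theoC} this forces $\g^\C$ to be of type $\mathrm A_\ell$, $\mathrm C_\ell$, $\mathrm E_6$ or $\mathrm F_4$ with the gradations of Table \ref{tabellina}. Hence $\g$ is a real form of one of these carrying a compatible depth-$2$ fundamental gradation whose $\g_{-2}$ is the standard $\so$-module, and a routine inspection of the Satake diagrams (following the node-removal rules recalled in this subsection) shows that all such real forms are exactly the entries of Table \ref{tabellona}. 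It therefore remains to decide, for each entry of that table, whether the negatively graded part $\g_{-2}+\g_{-1}$ is genuinely graded-isomorphic to a \emph{real} extended translation algebra, and in the affirmative cases to compute $N$ and $\h_0$.

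For the existence direction I would proceed as in the proof of Theorem \ref{theoC}. For each surviving signature one reads off $V=\g_{-2}$ with the signature dictated by the Satake diagram, identifies $W=\g_{-1}$ as $N$ copies of the appropriate real irreducible $\Cl(V)$-module $S$, and exhibits a nondegenerate $\so(V)$-invariant $\beta$ obeying (B1)--(B3) with bracket of the form \eqref{equazione_bracket}. Concretely $\beta$ is built as the tensor product of a real admissible form $b$ on $S$ with a pairing on the multiplicity space, where the invariants $(\tau,\sigma)$ of $b$ (and $\imath$, when $S$ is $\so(V)$-reducible) match those forced by the complex computation: $(\tau,\sigma,\imath)=(+,-,+)$ for type $\mathrm A$, $(\tau,\sigma)=(-,-)$ for type $\mathrm C$, and $(\tau,\sigma)=(-,+)$ for $\mathrm E_6$ and $\mathrm F_4$. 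Once $b$ (hence $\beta$) is produced, Lemma \ref{blablabla} guarantees that the resulting real prolongation complexifies to $\g^\C$, and the reality of the construction identifies it with the real form at hand; in the four-dimensional type-$\mathrm A$ cases I would invoke Lemma \ref{uaoh} to reconcile the two natural choices of $\beta$ on $S^+\oplus S^-$.

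For completeness one must instead \emph{eliminate} those entries of Table \ref{tabellona} for which no real admissible $b$ with the required invariants exists in the prescribed signature. The decisive input is that the set of realizable $(\tau,\sigma)$ depends on $p-q\bmod 8$ and on the reality type $\R$, $\C$ or $\mathbb{H}$ of $S$, and is \emph{not} symmetric under $(p,q)\leftrightarrow(q,p)$. This is what removes, from each exceptional mirror pair, the signature admitting no compatible form: the $\mathrm{E\,II}$ datum $(5,3)$, the $\mathrm{E\,III}$ datum $(1,7)$, the $\mathrm{F\,I}$ datum $(4,3)$ and the $\mathrm{F\,II}$ datum $(0,7)$ are discarded, while their partners $(3,5)$, $(7,1)$, $(3,4)$, $(7,0)$ survive. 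The same analysis fixes $N=\dim\g_{-1}/\dim S$ and pins down the real type of $\h_0$ --- whether $\mathfrak{sl}(N,\R)$, $\mathfrak{sl}(N,\mathbb{H})$, $\mathfrak{su}(p,q)$, $\mathfrak{sp}(N,\R)$, $\mathfrak{sp}(\tfrac N2,\R)$ or $\mathfrak{sp}(p,q)$ --- from the commutant of $\Cl(V)$ on $W$ together with the symmetry type of the induced pairing on the multiplicity space (as recorded in \eqref{eeeeaaaa}); since $\dim S$ jumps in the signatures where $S$ is quaternionic or complex, this is also what produces the parity restrictions ($\ell$ odd) in types $\mathrm{A_\ell II}$ and $\mathrm{A_\ell III}\,(3,1)$.

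The main obstacle is precisely this last bookkeeping: correctly reading the signature of $\g_{-2}$ off each Satake diagram, matching it against the \cite{AC} tables of admissible invariants to decide existence, and then disentangling the reality type of the multiplicity space so as to name $\h_0$ and establish the divisibility constraints. The verification that \eqref{equazione_bracket} is reproduced and that $\beta$ is reflexive of the correct symmetry is routine but must be carried out uniformly across all four types; the genuinely delicate point is the $(p,q)$-asymmetry responsible for eliminating exactly one signature from each exceptional mirror pair.
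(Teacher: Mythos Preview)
Your global strategy --- reduce to the complex classification via complexification, enumerate the admissible real forms by Satake diagrams, then verify or exclude each entry --- is exactly the paper's. The discrepancy is in the \emph{mechanism} you invoke to discard the unwanted exceptional signatures $\mathrm{E\,II}\,(5,3)$, $\mathrm{E\,III}\,(1,7)$, $\mathrm{F\,I}\,(4,3)$, $\mathrm{F\,II}\,(0,7)$.

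You attribute their exclusion to the unavailability of an admissible bilinear form $b$ with the required invariants $(\tau,\sigma)$ in those signatures. That is not the obstruction. The paper's argument is a bare dimension count: in each of these four signatures the irreducible real $\Cl(V)$-module $S$ has $\dim S = 2\dim\g_{-1}$ (e.g.\ $\Cl(\R^{5,3})\cong M_8(\mathbb H)$ so $\dim_\R S=32$, while $\dim\g_{-1}=16$), so $\g_{-1}$ cannot carry any $\Cl(V)$-module structure at all and the question of which $(\tau,\sigma)$ are realizable never arises. The $(p,q)$-asymmetry you correctly sense is real, but it manifests first at the level of $\dim S$ (via the reality type of $\Cl(V)$), not at the level of bilinear-form invariants on a hypothetical $S$. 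Your explanation therefore points at the wrong layer; as written it is a gap, because one would need to \emph{produce} a $\Cl(V)$-module structure on $\g_{-1}$ before appealing to the nonexistence of an admissible $b$ on it.

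A secondary point: for the classical families your outline is correct but omits the concrete identification of $\h_0$. The paper does not read $\h_0$ off the commutant abstractly; it uses the auxiliary symmetric form $\alpha$ of \eqref{laforma} (and, in the $(3,0)$ and $(1,3)$ cases, builds an explicit Hermitian or quaternionic-Hermitian form out of $\alpha$ and Clifford multiplication) to pin down the real form $\mathfrak{su}(p,q)$ or $\mathfrak{sp}(p,q)$. Your schematic ``commutant plus symmetry type'' argument would need to be fleshed out to recover these precise signatures.
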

As in the complex case, the multiplicity of $S$ in $\g_{-1}$ is denoted by $N$.

\begin{proof}
We first look at the exceptional Lie algebras. In cases $\mathrm{E\,II}$ with signature $(5,3)$, $\mathrm{E\,III}\,(1,7)$, $\mathrm{F\,I}\,(4,3)$, and $\mathrm{F\,II}\,(0,7)$ the dimension of the irreducible $\Cl(V)$-module $S$ equals twice the dimension of $\g_{-1}$, and hence their negatively graded parts are not extended translation algebras.
In all the remaining cases (i.e. $\mathrm{E\,I}\,(4,4)$, $\mathrm{E\,II}\,(3,5)$, $\mathrm{E\,III}\,(7,1)$, $\mathrm {E\,IV}\,(8,0)$, $\mathrm {E\,IV}\,(0,8)$, $\mathrm{F\,I}\,(3,4)$, and $\mathrm{F\,II}\,(7,0)$), the dimension of an irreducible $\Cl(V)$-module $S$ equals the dimension of $\g_{-1}$ and its complexification $\mathbb S=S^\C$ is a complex irreducible $\Cl(V^\C)$-module. Moreover, we know from \cite{AC} that, for each of these signatures on $V$, there is exactly one equivalence class of real extended translation algebras with $N=1$, and exactly one equivalence class of complex extended translation algebras with $N=1$. From the classification in the complex case and from Lemma \ref{blablabla} we obtain the result.
\smallskip

\noindent
We now focus on the classical cases. For expository reasons, we first consider the case $N=1$ (or $N=2$ if $\g=\mathrm{C_{3}\,I}\,(1,2)$).


\subsubsection*{Case $\mathrm{A_{4}\,I}$.} The signature of  $\g_{-2}$ is $(2,2)$ and $\dim \g_{-1}=(\ell-3)\dim S$. We consider the case $\ell=4$, i.e. $N=1$. There is exactly one equivalence class of real extended translation algebras, determined by a bilinear form
$b=f_{E}:S\otimes S\rightarrow\R$ with invariants $(+,-,+)$. 

There is only one extended complex translation algebra in dimension $4$, with prolongation $\mathrm{A}_{4}$. Hence, by Lemma \ref{blablabla}, $\g^\C=\mathrm{A}_{4}$ and  $\g=\mathrm{A_4 I}$.

\subsubsection*{Case $\mathrm{A_{5}\,II}$.} The signature of  $\g_{-2}$ is $(4,0)$ or $(0,4)$. We consider the case $\ell=5$, i.e. $N=1$. 
The $i$-linear extension to $S^\C=S+ i S$ of the complex structure $I\in \mathrm{End}(S)$ (see \cite{AC}) determines a $\Cl(V^\C)$-invariant decomposition
$
S^\C=S^{1,0}\oplus S^{0,1}
$
with $S^{1,0}\simeq S^{0,1}\simeq\mathbb{S}$ irreducible $\Cl(V^\C)$ modules.

Consider the forms
$b_{(4,0)}=h_{J}:S\otimes S\rightarrow\R$ if $\mathrm{sign}\,\g_{-2}=(4,0)$, and $b_{(0,4)}=h_{EJ}:S\otimes S\rightarrow\R$ if $\mathrm{sign}\,\g_{-2}=(0,4)$. Both have invariants $(+,-)$ and
$
S^{1,0}\perp S^{0,1}
$
with respect to the $i$-linear extensions of $b_{(4,0)}$ and $b_{(0,4)}$.

By comparing with the description of the complex case $\mathrm{A}_{5}$, we see that $\g^{\C}=\mathrm{A}_{5}$ so that $\g=\mathrm{A_5 II}$. Note that the complexified extended translation algebra has $N=2$.

\subsubsection*{Case $\mathrm{A_{4}\,III}\,(1,3)$.} 
For $\ell=4$, i.e. $N=1$, there is exactly one equivalence class of real extended translation algebras, determined by the bilinear form
$b=f_{E}\otimes h_{J}$ with invariants $(+,-)$. 
Again one can check that $\g^{\C}=\mathrm{A}_{4}$ so that $\g=\mathrm{A_4 III}$.

\subsubsection*{Case $\mathrm{A_{5}\,III}\,(3,1)$.}
We look at the case $N=1$, i.e. $\ell= 5$, and $\g\simeq\mathfrak{su}(3,3)$ or $\g\simeq\mathfrak{su}(2,4)$.
In both cases, 
the standard $\Cl(V)$-module $S$ is isomorphic (see again \cite{AC}) to the tensor product $S_{1,1}\otimes S_{2,0}$ of the standard $\Cl(\R^{1,1})$- and $\Cl(\R^{2,0})$-modules.
The space of $\so(V)$-invariant bilinear forms on $S$ with invariants $\tau$ and $\sigma$ satisfying $\tau\sigma=-1$ is $4$-dimensional. 
The $i$-linear extension to $S^\C=S+ i S$ of the complex structure $\Id\otimes I\in End(S)$ determines a decomposition
$
S^\C=S^{1,0}\oplus S^{0,1}
$
which is stable under $\Cl(V^\C)$, i.e. $S^{1,0}\simeq S^{0,1}\simeq\mathbb{S}$ as $\Cl(V^\C)$-irreducible modules.

Consider the bilinear forms 
$b^1=f_{E}\otimes h_{J}$ and $b^2=f\otimes h_{EI}$ on $S$ with invariants $(+,-)$ and $(-,+)$, respectively. Then, with respect to the $i$-linear extensions of $b^1$ (resp. $b^2$),
we have $S^{1,0}\perp_{b^1} S^{0,1}$ (resp. $S^{1,0}$ and $S^{0,1}$ are $b^2$-isotropic).

By comparing with the description of the complex case $\mathrm{A}_{5}$, using also Lemma \ref{uaoh}, we see that $\g^{\C}=\mathrm{A}_{5}$ in both cases. We observe that the complexified extended translation algebra has $N=2$.

Explicit computations give 
\[ \h_0^{b^1}\simeq\mathfrak{su}(1,1)\oplus\R\,,\qquad\h_0^{b^2}\simeq \mathfrak{su}(2)\oplus\R\,, \]
so that $\g\simeq\mathfrak{su}(3,3)$ or $\g\simeq\mathfrak{su}(2,4)$, respectively.
\smallskip


\subsubsection*{Case $\mathrm{C_{3}\,I}\,(1,2)$.} We look at the case $N=2$, because $N=1$ does not correspond to a simple prolongation of extended translation algebra. 
The standard $\Cl(V)$-module $S$ is isomorphic to the tensor product $S_{1,1}\otimes S_{0,1}$ and
there is exactly one admissible bilinear form on $S$ (up to scalar), namely
$b=f\otimes h$, with invariants $(-,-)$. 
Let $\omega$ be the standard symplectic form on $\R^{2}$ and consider the extended translation algebra
$ \mathfrak{m}:=\R^{1,2}+S\otimes \R^{2} $
with Lie bracket determined by the Clifford multiplication
\[
v\cdot(s\otimes c)=(v\circ s)\otimes c
\]
for any $v\in \R^{1,2}$, $c\in\R^2$, and by the bilinear form 
$\beta=b\otimes\omega$, with invariants $(-,+)$.

One can check that $\g^{\C}=\mathrm{C}_{3}$ so that $\g=\mathrm{C_3 I}$.

\subsubsection*{Case $\mathrm{C_{3}\,I}\,(2,1)$.} In this case $\dim_{\R}S=4$, $S\simeq S_{1,1}\otimes S_{1,0}$ and $N=1$. There is exactly one admissible  $\so(V)$-invariant bilinear form on $S$ with invariants $\tau$ and $\sigma$ satisfying $\tau\sigma=-1$, namely $b=f_E\otimes h_{I}$, with invariants $(+,-,+)$. 

Denote by $\vol\in\Cl(V)$ the volume form. Then  $P=i\,\vol$ is a para-complex structure and
determines a decomposition 
$S^\C=\mathbb{S}'\oplus\mathbb{S}''$
into the two {inequivalent} irreducible $\Cl(V^\C)$-modules $\mathbb S^{'}$ and $\mathbb S^{''}$.
Both $\mathbb S'$ and $\mathbb S''$ are isotropic w.r.t. the complexification of $b$. 
A straightforward computation shows that the complexified fundamental Lie algebra determined by $b$ concides with the one described in the complex case $\mathrm C_3$.

\subsubsection*{Case $\mathrm{C_{3}\,II}\,(0,3)$.} The space of $\so(V)$-invariant bilinear forms on $S$ with $\tau\sigma=-1$ is generated by $b=h_I$ with invariants $(+,-)$. The paracomplex structure given by $i\,\vol\in\Cl(V^\C)$ determines a decomposition
$
S^\C=\mathbb S^{'}\oplus \mathbb S^{''}
$
into two inequivalent $\Cl(V^\C)$-irreducible modules
$\mathbb S^{'}$ and $\mathbb S^{''}$, isotropic w.r.t. the complexification of $b$.
As in the previous case, one can see that $\g^\C=\mathrm C_3$.

\subsubsection*{Case $\mathrm{C_{3}\,II}\,(3,0)$.} The space of $\so(V)$-invariant bilinear forms on $S$ with $\tau\sigma=-1$ is generated by $b=h$ with invariants $(-,+)$.
The endomorphism $iI\in\mathrm{End}(S^\C)$ is a paracomplex structure that determines a decomposition
$S^\C=\mathbb S\oplus \mathbb S$
into two equivalent $\Cl(V^\C)$-irreducible modules, isotropic w.r.t. the complexification of $b$. Again $\g^\C$ is isomorphic to $\mathrm C_3$.

\bigskip

Now we look at higher $N$. We will explicitly describe only the families of algebras $\mathrm{C_{\ell}\,II}\,(3,0)$ and $\mathrm{A_{\ell}\,III}\,(1,3)$. The cases $\mathrm{C_{\ell}\,II}\,(0,3)$ and $\mathrm{A_{\ell}\,III}\,(3,1)$ 
can be treated analogously, whereas the proof of Theorem \ref{theoC} can be carried over unchanged for each of the remaining cases.
\subsubsection*{Case $\mathrm{A_{\ell}\,III}\,(1,3)$.}

In this case $N=\ell-3$ and $W=S\otimes \mathbb{R}^N$ is a $\Cl(V)$-module via 
$
v\cdot (s\otimes c)=(v\circ s)\otimes c
$, for $v\in V=\mathbb{R}^{1,3}$, $s\in S$, $c\in\mathbb{R}^N$.
Let $\delta$ be a nondegenerate symmetric bilinear form on $\mathbb{R}^N$ of signature $(p,q)$, and $\mathfrak{m}_{p,q}=V+W$ the real extended translation algebra associated to the following bilinear form on $W$ with invariants $(+,-)$:
\[
\beta(s\otimes c,t\otimes d):=b(s,t)\delta(c,d)\ .
\]
Again $\g^{\C}=\mathrm{A}_{\ell}$ and the maximal transitive prolongation $\g_{p,q}$ of $\mathfrak{m}_{p,q}$ is of type $\mathrm{A_\ell III}$. We now want to show that $\mathfrak{h}_0(\mathfrak{g_{p,q}})\simeq \mathfrak{u}(p,q)$. 
Let $\alpha$ be the symmetric bilinear form on $W$ defined by \eqref{laforma}, where $x\cdot y\cdot z\in\Cl(\mathbb{R}^{0,3})\subset \Cl(V)$ equals the volume form of $\mathbb{R}^{0,3}$. Explicit computations show that the signature of $\alpha$ is $(4p,4q)$ and that the volume $\vol\in \Cl(V)$, which is a complex structure on $W\simeq \mathbb{C}^2\otimes \mathbb{R}^N$, is in $\so(W,\alpha)$. Recall that $\mathfrak{h}_0\subset \so(W,\alpha)$. Hence, the Hermitian form
\[
H(\hat{w},\tilde{w}):=\alpha(\hat{w},\tilde{w})+i\alpha(\vol\cdot \hat{w},\tilde{w}):W\times W\rightarrow\mathbb{C}\ ,\qquad(\hat{w},\tilde{w}\in W)
\]
has complex signature $(2p,2q)$ and, since $[\mathfrak{h}_0,\Cl(V)^{\overline{0}}]=0$, one also gets that $\mathfrak{h}_0\subset\mathfrak{u}(W,H)\simeq \mathfrak{u}(2p,2q)$. Finally, for any element $s\in S\simeq \mathbb{C}^2$ of $H$-norm 1, consider the following $H$-orthogonal decomposition of $W$
\[ W=W'+ W''\ ,\quad W':=\mathbb{C}s\otimes \mathbb{R}^N\, \quad W'':=\mathbb{C}(x\cdot y\cdot z\cdot s)\otimes\mathbb{R}^N \]
into Hermitian vector spaces of complex signature $(p,q)$. Direct computations show that
$\mathfrak{h}_0$ preserve the decomposition and any of its elements is fully determined by the action on, say, $W'$. This implies $\mathfrak{h}_0\simeq \mathfrak{u}(p,q)$.

\subsubsection*{Case $\mathrm{C_{\ell}\,II}\,(3,0)$.}
In this case $N=\ell-2$ and $W=S\otimes \mathbb{R}^N$ is a $\Cl(V)$-module via 
$
v\cdot (s\otimes c)=(v\circ s)\otimes c
$, for $v\in V=\mathbb{R}^{3,0}$, $s\in S$, $c\in\mathbb{R}^N$.
Let $\delta$ be a nondegenerate symmetric bilinear form on $\mathbb{R}^N$ of signature $(p,q)$, and $\mathfrak{m}_{p,q}=V+W$ the real extended translation algebra associated to the following bilinear form on $W$ with invariants $(-,+)$:
\[
\beta(s\otimes c,t\otimes d):=b(s,t)\delta(c,d)\ .
\]
Again $\g^{\C}=\mathrm{C}_{\ell}$ and the maximal transitive prolongation $\g_{p,q}$ of $\mathfrak{m}_{p,q}$ is of type $\mathrm{C_\ell II}$. We now want to show that $\mathfrak{h}_0(\mathfrak{g_{p,q}})\simeq \mathfrak{sp}(p,q)$. The symmetric bilinear form $\alpha$ on $W$ defined by \eqref{laforma}, with $x\cdot y\cdot z=\vol\in\Cl(V)$, turns out to be proportional to $\beta$. Hence, it is of real signature $(4p,4q)$. Since $\mathfrak{h}_0\subset \so(W,\alpha)=\so(W,\beta)$, we get that $\mathfrak{h}_0=\mathfrak{h}_0^a$ (see \eqref{eeeeaaaa}). Hence
{\scriptsize \[
H(\hat{w},\tilde{w}):=\alpha(\hat{w},\tilde{w})+i\alpha(x\cdot \hat{w},\tilde{w})+j\alpha(y\cdot \hat{w},\tilde{w})+k\alpha(z\cdot \hat{w},\tilde{w}):W\times W\rightarrow\mathbb{H}\ ,\ (\hat{w},\tilde{w}\in W)
\]}
is a $\mathbb{H}$-hermitian form of quaternionic signature $(p,q)$ and $\mathfrak{h}_0\simeq \mathfrak{sp}(p,q)$. 
\end{proof}
\section{The low dimensional cases $\dim V=1,2$.}
\label{sec3}
We finally consider the extended translation algebras $\m=V+W$, when $\dim V=1$ or $2$: Theorem \ref{fine} does not apply in these cases, and in fact the maximal transitive prolongation $\g$ of $\m$ turns out to be always infinite dimensional.

Let us denote by $\mathfrak c_\ell(\C)$ (resp. $\mathfrak c_{\ell}(\R)$) the infinite-dimensional complex (resp. real) contact algebra in dimension $2\ell+1$ (see e.g. \cite{MT} for a detailed description) and by $\mathfrak{n}_\ell(\C)$ (resp. $\mathfrak{n}_\ell(\R)$) its negatively graded part.
\subsection{The complex case}
\begin{proposition}
Let $\m$ be a complex extended translation algebra with $\dim V=2$. Then $N$ is even, $\m$ is isomorphic to $\mathfrak n_{N/2}(\C)\oplus\mathfrak n_{N/2}(\C)$ and the maximal transitive prolongation is $\mathfrak{g}=\mathfrak c_{N/2}(\C)\oplus \mathfrak c_{N/2}(\C)$. 
\end{proposition}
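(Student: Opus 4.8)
The plan is to exploit the very simple structure of $\Cl(V)$ in rank two in order to split $\m$ as a direct sum of two Heisenberg algebras, and then to show that the Tanaka prolongation distributes over this splitting. First I would fix a basis of null vectors $v_+,v_-\in V$ with $(v_+,v_-)=1$ and $(v_\pm,v_\pm)=0$, so that $v_\pm^2=0$ and $v_+v_-+v_-v_+=-2$ in $\Cl(V)$. The elements $\pi_\pm=-\tfrac12 v_\pm v_\mp$ are then complementary idempotents of the commutative even part $\Cl(V)^{\overline 0}$, and they induce an $\so(V)$-invariant decomposition $W=W_++W_-$ with $W_\pm=\pi_\pm W$; equivalently, $W_\pm$ are the eigenspaces of the paracomplex structure $i\,\vol$, since $\pi_+-\pi_-=i\,\vol$. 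As $\dim V=2$, the irreducible module $\mathbb S$ is two-dimensional with one-dimensional half-spin spaces, whence $\dim W_\pm=N$. A direct computation in $\Cl(V)$ gives the key multiplication rules: $v_+$ annihilates $W_+$ and maps $W_-$ isomorphically onto $W_+$, while $v_-$ annihilates $W_-$ and maps $W_+$ isomorphically onto $W_-$. Using the $\so(V)$-invariance of $\beta$ together with $\vol\,v=-v\,\vol$ for $v\in V$, I would then check that $W_+$ and $W_-$ are $\beta$-isotropic and mutually dual.

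These facts feed directly into the bracket $([s,t],v)=\beta(v\cdot s,t)$. For $s,t\in W_+$ only the $v_-$-component of $v$ contributes and $[s,t]\in\C v_+$; symmetrically $[W_-,W_-]\subseteq\C v_-$; and for $s\in W_+$, $t\in W_-$ the vector $v\cdot s$ lies in $W_-$, so $\beta(v\cdot s,t)=0$ and $[W_+,W_-]=0$. Hence, writing $V_1=\C v_+$ and $V_2=\C v_-$, the algebra $\m$ is the direct sum of the graded ideals $V_1+W_+$ and $V_2+W_-$. On $W_+$ the bracket is encoded by $\gamma(s,t)=\beta(v_-\cdot s,t)$, which is nondegenerate because $\beta$ pairs $W_+$ with $W_-$ and $v_-\colon W_+\to W_-$ is injective, and which is skew-symmetric because combining (B2) and (B3) gives $\gamma(s,t)=-\epsilon^2\gamma(t,s)=-\gamma(t,s)$. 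Thus $\gamma$ is symplectic, forcing $\dim W_+=N$ to be even and $V_1+W_+\cong\mathfrak n_{N/2}(\C)$; the same holds for the second factor, giving $\m\cong\mathfrak n_{N/2}(\C)\oplus\mathfrak n_{N/2}(\C)$.

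It remains to compute the prolongation. The maximal transitive prolongation of a Heisenberg algebra is the contact algebra (see \cite{MT}), so each factor $V_i+W_\pm$ prolongs to $\mathfrak c_{N/2}(\C)$. The main point, and the step I expect to be the real obstacle, is that prolongation does \emph{not} commute with direct sums in general; here I would prove that it does by showing that every homogeneous element of $\g$ preserves the decomposition $\m=(V_1+W_+)\oplus(V_2+W_-)$. In degree $0$ this is a short computation: a derivation $D$ must preserve $V_1,V_2$ (the $v_\mp$-components of $Dv_\pm$ vanish by nondegeneracy of $\gamma$), and then its off-diagonal blocks $W_+\to W_-$ and $W_-\to W_+$ are killed using $[W_+,W_-]=0$ and nondegeneracy again. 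For $p\ge1$ I would argue by induction, assuming that $\g_q=\g'_q\oplus\g''_q$ for $q<p$ with $\g'$ acting trivially on $\m''=V_2+W_-$ and vice versa. Given $X\in\g_p$ and $s\in W_+$, write $X\cdot s=Y'+Y''$ along $\g'_{p-1}\oplus\g''_{p-1}$; the Jacobi identity applied to $X,s,t$ with $t\in W_-$ and $[s,t]=0$ forces $[Y'',t]\in\g''_{p-2}$ to equal $[Z',s]\in\g'_{p-2}$, hence both vanish, and transitivity of $\g''$ yields $Y''=0$. The symmetric argument gives $X\cdot W_-\subseteq\g''_{p-1}$, and applying $X$ as a derivation to $V_1=[W_+,W_+]$ and $V_2=[W_-,W_-]$ then shows $X\in\g'_p\oplus\g''_p$. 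This closes the induction and yields $\g=\mathfrak c_{N/2}(\C)\oplus\mathfrak c_{N/2}(\C)$; the only delicate bookkeeping is keeping track of the degrees so that the cross-terms land in $\g'_{p-2}\cap\g''_{p-2}=0$.
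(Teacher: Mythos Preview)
Your proposal is correct and follows essentially the same strategy as the paper: split $\m$ into two graded ideals via the $\so(2,\C)$-weight decomposition (equivalently, your Clifford-idempotent/$i\,\vol$ decomposition), identify each summand as a Heisenberg algebra of dimension $N+1$, and read off the prolongation. The paper's proof is much terser---it records the weight spaces, remarks that odd $N$ forces degeneracy, and writes ``the proposition follows,'' implicitly using the general fact (Proposition~3.3 of \cite{MN}, invoked later in the paper) that the maximal transitive prolongation of a direct sum of nondegenerate fundamental algebras is the direct sum of the prolongations; your inductive argument for this splitting is therefore sound but can be replaced by that citation.
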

\begin{proof}
We can identify $\so(2,\C)=\C$, $\m_{-1}=\C^N_++\C^N_-$, and $\m_{-2}=\C_{+}+\C_{-}$, with action of $\so(2,\C)$ given by:
\begin{align*}
\lambda\cdot s_\pm=\pm\lambda s_\pm,\quad\lambda\cdot v_\pm=\pm2\lambda v_\pm\quad (\lambda\in\mathfrak{so}(2,\mathbb{C}),\ s_\pm\in \C^N_\pm,\ v_\pm\in \C_\pm).
\end{align*}
Then $\m=\m_+\oplus\m_-$ as a direct sum of ideals. If $N$ is odd then $\m$ cannot be nondegenerate. The proposition follows.
\end{proof}

\begin{proposition}
Let $\m$ be a complex extended translation algebra with $\dim V=1$. Then $N$ is even and the maximal transitive prolongation is $\mathfrak{g}=\mathfrak c_{N/2}(\C)$.
\end{proposition}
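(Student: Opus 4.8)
The plan is to show that, for $\dim V=1$, the extended translation algebra $\m$ is nothing but a complex Heisenberg algebra, and then to invoke the classical identification of its maximal transitive prolongation with the contact algebra.

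First I would analyze $\Cl(V)$. Since $\dim V=1$ we have $\so(V)=0$, so condition (B1) is vacuous. Fixing $v\in V$ with $(v,v)=1$, the Clifford relation gives $v^2=-\mathbf 1$, whence $\Cl(V)\cong\C\oplus\C$ and every irreducible module is one-dimensional, with $v$ acting as $+i$ or $-i$. In particular $\dim S=1$, so $\dim W=N$, and the operator $J:=v\cdot{}$ on $W$ satisfies $J^2=-\Id$. Next I would reduce the bracket to a single bilinear form: because $\m_{-2}=\C v$ is one-dimensional, the whole Lie structure is encoded by $\omega(s,t):=\beta(v\cdot s,t)=\beta(Js,t)$ through $[s,t]=\omega(s,t)\,v$. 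Using (B2) and (B3) together with $\epsilon^2=1$, one checks $\omega(t,s)=-\epsilon\,\beta(s,Jt)=-\epsilon^2\,\beta(Js,t)=-\omega(s,t)$, so $\omega$ is skew-symmetric, and it is nondegenerate since $\beta$ is and $J$ is invertible.

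A nondegenerate skew-symmetric form on $W$ forces $\dim W=N$ to be even, say $N=2\ell$; this is the first assertion. Moreover $\m$ is then exactly the Heisenberg algebra with symplectic space $\m_{-1}=(W,\omega)$ and one-dimensional center $\m_{-2}=\C v$, and the grading element $E$ (acting by $-1$ on $W$ and $-2$ on $\m_{-2}$) is precisely the standard Heisenberg grading; thus $\m\cong\mathfrak n_{N/2}(\C)$.

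Finally, it remains to identify the prolongation, which is the only substantive input and is classical: the maximal transitive prolongation of the complex Heisenberg algebra $\mathfrak n_\ell(\C)$ is the contact algebra $\mathfrak c_\ell(\C)$ (see \cite{MT}, and \cite{N1,N2} for the general prolongation theory). Hence $\g=\mathfrak c_{N/2}(\C)$, as claimed. This is the one-summand analogue of the preceding proposition: for $\dim V=2$ the extra dimension of $\m_{-2}$ made $\m$ split into two Heisenberg ideals, whereas here $\dim\m_{-2}=1$ yields a single contact algebra. The main (and essentially only) obstacle is this last identification of the prolongation with $\mathfrak c_\ell(\C)$, which I take from the cited literature; the rest is the elementary Clifford-algebra computation above.
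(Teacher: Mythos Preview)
Your proof is correct and follows exactly the same approach as the paper's: identify $\m_{-2}=\C$, $\m_{-1}=\C^N$, observe that the induced bracket form on $\m_{-1}$ is skew-symmetric and nondegenerate (forcing $N$ even), and then recognize $\m$ as the Heisenberg algebra $\mathfrak n_{N/2}(\C)$ with contact prolongation $\mathfrak c_{N/2}(\C)$. The paper compresses this into two lines (``If $N$ is odd, then $\mathfrak m$ cannot be nondegenerate''), whereas you spell out the Clifford-algebra computation and the skew-symmetry of $\omega$ explicitly; the substance is identical.
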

\begin{proof}
We can identify $\mathfrak{m}_{-1}=\mathbb{C}^N$, and
$\mathfrak{m}_{-2}=\mathbb{C}$. If $N$ is odd, then $\mathfrak{m}$ cannot be nondegenerate. Then $N$ is even and $\mathfrak{g}=\mathfrak c_{N/2}(\C)$.
\end{proof}

\subsection{The real case}
We first give a lemma.
\begin{lemma}\label{blobloblo}
Let $\m=\sum_{j<0}\m_{j}$ be a complex fundamental nondegenerate graded Lie algebra and $\g$ its maximal complex transitive prolongation. Let $\m_\R$ (resp. $\g_\R$) be the real graded Lie algebra obtained from $\m$ (resp. $\g$) by restriction of scalars. Then $\g_\R$ is the real maximal transitive prolongation of $\m_\R$.
\end{lemma}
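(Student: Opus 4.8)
The plan is to establish the two inclusions $\g_\R\subseteq\tilde\g$ and $\tilde\g\subseteq\g_\R$, where $\tilde\g$ denotes the real maximal transitive prolongation of $\m_\R$, and then to conclude by a degree-by-degree dimension count (all graded pieces being finite dimensional). The first inclusion is immediate: $\g_\R$ is a real transitive prolongation of $\m_\R$. Indeed $(\g_\R)_p=(\m_\R)_p$ in negative degrees, each $(\g_\R)_p$ is finite dimensional over $\R$, and transitivity is inherited from $\g$ because neither the bracket nor the underlying set of $\g_{-1}$ changes under restriction of scalars; hence $\big([X,(\g_\R)_{-1}]=0\big)\Rightarrow\big([X,\g_{-1}]=0\big)\Rightarrow X=0$. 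By maximality of $\tilde\g$ we obtain an injective homomorphism of graded Lie algebras $\g_\R\hookrightarrow\tilde\g$, which we treat as an inclusion.

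For the reverse inclusion I would pass to complexifications and invoke Lemma \ref{blablabla}. Writing $J$ for the complex structure of $\m$ (multiplication by $i$), viewed as an $\R$-linear operator on $\m_\R$, the $\C$-bilinearity of the bracket gives $[JX,Y]=J[X,Y]=[X,JY]$, so the $\C$-linear extension of $J$ to $(\m_\R)^\C$ commutes with the (complex bilinear) bracket. Its eigenspaces therefore split $(\m_\R)^\C$ as a direct sum of graded ideals
\[
(\m_\R)^\C=\m^{1,0}\oplus\m^{0,1},\qquad \m^{1,0}\cong\m,\ \ \m^{0,1}\cong\bar\m,
\]
where $\bar\m$ is the complex-conjugate algebra. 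Since $\m_\R$ is again fundamental (the factor $i$ lies inside $\m_{-1}$, so the real iterated brackets of $(\m_{-1})_\R$ already $\C$-span $\m$), Lemma \ref{blablabla} applies to $\m_\R$ and yields $(\tilde\g)^\C=\mathrm{Pr}_\C\big((\m_\R)^\C\big)=\mathrm{Pr}_\C(\m^{1,0}\oplus\m^{0,1})$, where $\mathrm{Pr}_\C$ denotes the complex maximal transitive prolongation.

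The key step is then to show that the complex maximal transitive prolongation of a direct sum $\mathfrak a\oplus\mathfrak b$ of nondegenerate fundamental graded Lie algebras splits as $\mathrm{Pr}_\C(\mathfrak a)\oplus\mathrm{Pr}_\C(\mathfrak b)$. I would prove this by induction on the degree $p\geq0$: an element $D$ of the $p$-th prolongation restricts to derivations on $\mathfrak a_-$ and on $\mathfrak b_-$ with values in $\mathrm{Pr}_\C(\mathfrak a)_{<p}\oplus\mathrm{Pr}_\C(\mathfrak b)_{<p}$; using $[\mathfrak a,\mathfrak b]=0$ and evaluating the derivation identity on pairs in $\mathfrak a_-\times\mathfrak a_-$ and on mixed pairs in $\mathfrak a_{-1}\times\mathfrak b_{-1}$ forces the off-diagonal components to vanish. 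For $p\geq1$ this uses transitivity of the factor prolongations, while the base case $p=0$ is exactly where nondegeneracy enters: a nonzero cross component would produce some $Z\in\mathfrak b_{-1}$ with $[Z,\mathfrak b_{-1}]=0$, hence $Z=0$. Granting this, $(\tilde\g)^\C\cong\g\oplus\bar\g$, so $\dim_\R(\tilde\g)_p=\dim_\C(\tilde\g)^\C_p=2\dim_\C\g_p=\dim_\R(\g_\R)_p$ for every $p$. Combined with the inclusion $\g_\R\hookrightarrow\tilde\g$ and the finiteness of all graded pieces, this forces $\g_\R=\tilde\g$.

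The main obstacle is the direct-sum splitting of the prolongation, namely checking that no cross prolongation elements survive in any degree. The degree-zero case is governed directly by the nondegeneracy hypothesis, and the positive-degree cases reduce to it through transitivity and fundamentality; the delicate point is to organize this induction cleanly, keeping careful track of the graded component into which each iterated bracket lands.
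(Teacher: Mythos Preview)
Your proposal is correct and follows essentially the same route as the paper's proof: complexify, decompose $(\m_\R)^\C\cong\m\oplus\bar\m$, use that the complex maximal transitive prolongation of a direct sum of nondegenerate fundamental algebras is the direct sum of the prolongations, and conclude via Lemma~\ref{blablabla} by a dimension comparison. The only substantive difference is that the paper cites this direct-sum splitting from \cite[Proposition~3.3]{MN}, whereas you sketch a self-contained inductive proof of it; your sketch is along the right lines, though in writing it up you should be careful that ``nondegenerate'' for general depth means the center of $\m$ lies in the lowest graded piece, and that the vanishing of the cross components on $\m_{-1}$ propagates to all of $\m_{-}$ via fundamentality.
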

\begin{proof}
The complexification $\m_\R^\C$ of $\m_\R$ (resp. $\g_\R^\C$ of $\g_\R$) is isomorphic, as a complex Lie algebra, to the direct sum of two ideals $\m_\R^\C=\m\oplus\overline\m$ (resp. $\g_\R^\C=\g\oplus\overline\g$). By \cite[Proposition 3.3]{MN}, the maximal complex transitive prolongation of $\m\oplus\overline\m$ is $\g\oplus\overline\g$. Then $\g_\R$ is a real transitive prolongation of $\m_\R$ such that its complexification is the maximal complex transitive prolongation of $\m_\R^\C$. By Lemma \ref{blablabla}, $\g_\R$ is the maximal real transitive prolongation of $\m_\R$.
\end{proof}

We give now the description of $\m$ and $\g$ for $\dim V=1,2$.
\begin{proposition}
Let $\m$ be a real extended translation algebra with $\dim V=2$. Then one of the following three cases occurs:
\begin{itemize}
\item If $\sign\,V=(2,0)$, then $N$ is an arbitrary positive integer, $\m=\mathfrak n_N(\C)_{\R}$,  and $\mathfrak{g}=\mathfrak{c}_{N}({\mathbb{C}})_{\R}$.
\item If $\sign\,V=(0,2)$, then $N$ is even, $\m=\mathfrak n_{N/2}(\C)_{\R}$,  and $\mathfrak{g}=\mathfrak{c}_{N/2}({\mathbb{C}})_{\R}$.
\item If $\sign\,V=(1,1)$, then $N$ is even, $\m=\mathfrak n_{N/2}(\R)\oplus \mathfrak n_{N/2}(\R)$,  and $\mathfrak{g}=\mathfrak c_{N/2}(\R)\oplus \mathfrak c_{N/2}(\R)$.
\end{itemize}
\end{proposition}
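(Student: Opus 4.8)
The plan is to handle the three signatures uniformly by exploiting that, for $\dim V=2$, the orthogonal algebra $\so(V)=\Lambda^2V$ is one-dimensional and lies inside the even Clifford algebra $\Cl(V)^{\overline 0}=\langle 1,\omega\rangle$, where $\omega=e_1\cdot e_2$ for an orthogonal basis $e_1,e_2$ of $V$. A one-line computation gives $\omega^2=-(e_1,e_1)(e_2,e_2)\mathbf 1$, so $\omega^2=-\mathbf 1$ when $V$ is definite (signatures $(2,0)$ and $(0,2)$) and $\omega^2=+\mathbf 1$ when $V$ is indefinite (signature $(1,1)$). Let $J$ be the degree-preserving endomorphism of $\m$ acting on $W=\m_{-1}$ as Clifford multiplication by $\omega$ and on $V=\m_{-2}$ as $\frac12\ad(\omega)\in\so(V)$; then $J^2=\omega^2\cdot\Id$ on both summands. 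Because $\omega$ anticommutes with every $v\in V$ inside $\Cl(V)$ (as $\dim V=2$), a short computation from \eqref{equazione_bracket} and the skew-symmetry of $\frac12\ad(\omega)$ yields the compatibility identity
\[
[Js,t]=J[s,t]\qquad(s,t\in W),
\]
which drives the whole argument.

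First I would dispose of the two definite cases, where $J^2=-\Id$ makes $J$ a complex structure on $\m$. The identity above says exactly that the bracket is $\C$-bilinear, so $\m=(\m^{\mathrm{hol}})_{\R}$ is the realification of a complex graded Lie algebra $\m^{\mathrm{hol}}$ with $\dim_{\C}\m^{\mathrm{hol}}_{-2}=1$ whose bracket is a $\C$-valued antisymmetric form on $\m^{\mathrm{hol}}_{-1}$. Nondegeneracy of $\beta$ forces this form to be a nondegenerate symplectic form, hence $\m^{\mathrm{hol}}\cong\mathfrak n_\ell(\C)$ with $\ell=\frac12\dim_{\C}\m^{\mathrm{hol}}_{-1}$. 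A dimension count using $\dim_{\R}S=4$ for $(2,0)$ and $\dim_{\R}S=2$ for $(0,2)$ gives $\ell=N$ in the first case ($N$ arbitrary) and $\ell=N/2$ in the second (forcing $N$ even for the symplectic form to exist). The real prolongation then follows from Lemma \ref{blobloblo} together with the complex $\dim V=1$ case already treated: the complex prolongation of $\mathfrak n_\ell(\C)$ is $\c_\ell(\C)$, so $\g=\c_\ell(\C)_{\R}$.

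In the indefinite case $(1,1)$ the same identity behaves differently, because now $J^2=+\Id$: the operator $J$ is a para-complex structure, and I would split $W=W_++W_-$ and $V=V_++V_-$ into its $\pm1$-eigenspaces. Matching eigenvalues in $[Js,t]=J[s,t]$ forces $[W_\pm,W_\mp]=0$ and $[W_\pm,W_\pm]\subseteq V_\pm$, so $\m=\m_+\oplus\m_-$ decomposes as a direct sum of ideals $\m_\pm=V_\pm+W_\pm$, each with one-dimensional centre $V_\pm$ and skew bracket. Nondegeneracy of $\beta$ makes this bracket a nondegenerate symplectic form, so $\m_\pm\cong\mathfrak n_{N/2}(\R)$ (again $N$ even, as $\dim_{\R}W_\pm=N$). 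Since the maximal transitive prolongation of a direct sum is the direct sum of the prolongations \cite{MN}, and $\mathfrak n_{N/2}(\R)$ prolongs to the real contact algebra $\c_{N/2}(\R)$ \cite{MT}, one obtains $\g=\c_{N/2}(\R)\oplus\c_{N/2}(\R)$.

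The load-bearing step is the compatibility identity $[Js,t]=J[s,t]$ and its two readings: over a definite $V$ it upgrades $\m$ to a complex contact nilpotent algebra, while over an indefinite $V$ it yields the ideal decomposition. The remaining work is routine: checking nondegeneracy of the induced symplectic forms (which produces the parity constraints on $N$) and passing through restriction of scalars via Lemma \ref{blobloblo}. The only external inputs are the standard descriptions of the contact algebras $\c_\ell(\K)$ and the behaviour of maximal transitive prolongations under direct sums and restriction of scalars, all drawn from \cite{MN,MT}.
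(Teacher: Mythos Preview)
Your argument is correct and takes a genuinely different route from the paper's. The paper proceeds by complexification: from the complex $\dim V=2$ result one knows $\m^\C\cong\mathfrak n(\C)\oplus\mathfrak n(\C)$, and the real form $\m$ is recovered by analysing how complex conjugation $\varsigma$ acts on the two ideals (swapping them gives $\m=\mathfrak n(\C)_\R$, fixing each gives $\m=\mathfrak n(\R)\oplus\mathfrak n(\R)$); the signature is then read off from whether $\m_{-2}$ is $\so(V)$-irreducible. Your approach instead works entirely over $\R$, extracting the needed structure from the volume element $\omega=e_1e_2\in\Cl(V)^{\overline 0}$ and the compatibility identity $[Js,t]=J[s,t]$, which is indeed an easy consequence of the anticommutation of $\omega$ with $V$ and the skewness of $J|_V$. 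Your method is more hands-on and self-contained (it does not presuppose the complex $\dim V=2$ case), and it makes transparent \emph{why} the definite signatures produce a complex Heisenberg algebra while the split signature produces a real direct sum: it is precisely the sign of $\omega^2$. The paper's approach, by contrast, is shorter once the complex classification is in hand and treats all three signatures by a single dichotomy on $\varsigma$. Both ultimately invoke Lemma~\ref{blobloblo} (restriction of scalars) and \cite[Proposition~3.3]{MN} (prolongation of direct sums) to pass from the identification of $\m$ to that of $\g$, so the external inputs are the same.
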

\begin{proof}
By the description of the complex case, $\m^\C=\mathfrak n(\C)\oplus\mathfrak n(\C)$ as direct sum of ideals.
Let $\varsigma$ be the conjugation with respect to $\m$. There are two possibilities.

If $\varsigma$ exchanges the two ideals isomorphic to $\mathfrak n(\C)$ then $\m=\mathfrak n(\C)_{\R}$ and the real maximal transitive prolongation is $\mathfrak{g}=\mathfrak{c}({\mathbb{C}})_{\R}$ by Lemma \ref{blobloblo}. In this case $\m_{-2}$ is 
$\g_0$-irreducible, hence also 
$\so(V)$-irreducible and the signature of $V$ is $(2,0)$ or $(0,2)$. In signature $(2,0)$ we have that $N=\dim\m_{-1}/4$ is an arbitrary positive integer and $\mathfrak{g}=\mathfrak{c}_{N}({\mathbb{C}})_{\R}$. In signature $(0,2)$ we have that $N=\dim\m_{-1}/2$ must be even and $\mathfrak{g}=\mathfrak{c}_{{N}/{2}}({\mathbb{C}})_{\R}$.

If each of the two ideals is $\varsigma$-invariant, then $\m=\mathfrak{n}(\R)\oplus\mathfrak{n}(\R)$ as direct sum of ideals. In this case $\m_{-2}$ is $\g_0$- and $\so(V)$-reducible and hence the signature of $V$ is $(1,1)$. One has that $N=\dim\m_{-1}/2$ is even and $\mathfrak{g}=\mathfrak{c}_{{N}/{2}}({\mathbb{R}})\oplus\mathfrak{c}_{{N}/{2}}({\mathbb{R}})$.
\end{proof}

\begin{proposition}
Let $\m$ be a real extended translation algebra with $\dim V=1$. 
Then exacltly one of the following two cases occurs:
\begin{itemize}
\item If $\sign V =(1,0)$, then $N$ is an arbitrary positive integer, $\m=\mathfrak n_N(\R)$,  and $\mathfrak{g}=\mathfrak{c}_{N}(\R)$.
\item If $\sign V =(0,1)$, then $N$ is even, $\m=\mathfrak n_{N/2}(\R)$,  and $\mathfrak{g}=\mathfrak{c}_{N/2}(\R)$.
\end{itemize}
\end{proposition}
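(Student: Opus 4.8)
The plan is to recognize $\m$ as a real Heisenberg algebra and then appeal to the classical fact that the maximal transitive prolongation of a Heisenberg algebra is the corresponding contact algebra. Since $\dim V=1$, the orthogonal algebra $\so(V)$ vanishes, so condition (B1) is vacuous and $\beta$ is constrained only by (B2) and (B3). I would begin by fixing a generator $e\in V$ with $(e,e)\neq0$ and recording the Clifford algebra together with its irreducible modules (cf.\ \cite{LM}): for $\sign V=(1,0)$ one has $e^2=-1$, so $\Cl(V)\cong\C$ has a single real irreducible module $S$ of real dimension $2$, on which $e$ acts as a complex structure; for $\sign V=(0,1)$ one has $e^2=1$, so $\Cl(V)\cong\R\oplus\R$ has two inequivalent real irreducible modules, each of real dimension $1$, on which $e$ acts as $+1$ and $-1$ respectively.

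Next I would encode the bracket. Because $\m_{-2}=V=\R e$ is one-dimensional, the bracket \eqref{equazione_bracket} is determined by the single bilinear form $c$ on $W=\m_{-1}$ given by $[s,t]=c(s,t)\,e$, equivalently $c(s,t)=(e,e)^{-1}\beta(e\cdot s,t)$. Since $[\cdot,\cdot]$ is a Lie bracket, $c$ is skew-symmetric, and $c$ is in fact \emph{nondegenerate}: if $c(s,\cdot)=0$ then $e\cdot s$ is $\beta$-orthogonal to all of $W$, hence $e\cdot s=0$ by nondegeneracy of $\beta$, hence $s=0$ since Clifford multiplication by the non-isotropic vector $e$ is invertible (with inverse $s\mapsto-(e,e)^{-1}\,e\cdot s$). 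Thus $(W,c)$ is a real symplectic vector space, and $\m$ is a two-step nilpotent graded Lie algebra with one-dimensional centre $\m_{-2}$ and nondegenerate Levi form; that is, $\m$ is the real Heisenberg algebra $\mathfrak n_\ell(\R)$ with $2\ell=\dim_\R W$.

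It remains to carry out the dimension bookkeeping and invoke the prolongation. In signature $(1,0)$ we have $\dim_\R W=2N$, even for every $N\geq1$, so $\ell=N$ and $\m\cong\mathfrak n_N(\R)$ for all $N$ (realized, for instance, by a positive-definite $\beta$ with $e$ acting orthogonally). In signature $(0,1)$ we have $\dim_\R W=N$, and since a real symplectic space has even dimension the nondegeneracy of $c$ forces $N$ to be even, giving $\ell=N/2$ and $\m\cong\mathfrak n_{N/2}(\R)$; conversely every even $N$ occurs for a suitable choice of $W$ and $\beta$. As the maximal transitive prolongation depends only on the graded Lie algebra structure of $\m$, and the prolongation of $\mathfrak n_\ell(\R)$ is the real contact algebra $\mathfrak c_\ell(\R)$ (see e.g.\ \cite{MT}), we conclude $\g=\mathfrak c_N(\R)$ and $\g=\mathfrak c_{N/2}(\R)$ respectively. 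Finally, a one-dimensional real quadratic space has signature either $(1,0)$ or $(0,1)$ and not both, so exactly one case occurs.

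The argument is short once two inputs are in place: the nondegeneracy of $c$, which forces $\dim_\R W$ even and thereby yields the parity constraint on $N$ in signature $(0,1)$, and the classical identification of $\mathfrak c_\ell(\R)$ as the maximal transitive prolongation of $\mathfrak n_\ell(\R)$. The one point genuinely demanding attention is to note that $\m_{-2}=V$ is one-dimensional \emph{over} $\R$, so that $\g$ is the genuinely real contact algebra and not one obtained by restriction of scalars from $\C$, as happened for $\dim V=2$ in signature $(2,0)$.
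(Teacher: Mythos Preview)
Your proof is correct and follows the same line as the paper's, which is extremely terse (two sentences of dimension count followed by ``the statement easily follows''). You have simply made explicit what the paper leaves to the reader: that the skew form $c$ on $W$ induced by the bracket is nondegenerate, forcing $\dim_\R W$ to be even, and that the maximal transitive prolongation of the resulting Heisenberg algebra $\mathfrak n_\ell(\R)$ is the contact algebra $\mathfrak c_\ell(\R)$.
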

\begin{proof}
In signature $(1,0)$, one has that $N=\dim \mathfrak{m}_{-1}/ 2$ is an arbitrary positive integer.
In signature $(0,1)$, one has that $N=\dim \mathfrak{m}_{-1}$ is even. The statement easily follows.
\end{proof}

\section{The $N=1$ classification.}
\label{sec4}
In this section we explicitly describe the maximal transitive prolongations of real and complex extended translation algebras with $N=1$. 
\\
In the complex case, we have the following.
\begin{theorem}
\label{N1c}
Let $\mathfrak{m}=\mathfrak{m}_{-2}+\mathfrak{m}_{-1}=V+W$ be a complex extended translation algebra with $N=1$. 
Then $\dim V\equiv 4,5,6,7,8\; \mathrm{mod}\; 8$. If $\dim V=4,7,8$, then the maximal transitive complex prolongation $\g$ of $\mathfrak{m}$ is simple and
\begin{itemize}
\item[--] $\mathfrak{g}=\mathfrak{sl}(5,\mathbb{C})$ if $\dim V=4$,
\item[--] $\mathfrak{g}=\mathrm F_{4}$ if $\dim V=7$,
\item[--] $\mathfrak{g}=\mathrm E_{6}$ if $\dim V=8$,
\end{itemize}
with gradation as described in Theorem \ref{theoC}. \\
In all other dimensions, $\mathfrak{g}$ is not semi-simple, $\mathfrak{g}_0$ is as in Theorem \ref{lo0}, and $\mathfrak{g}_{i}=0$ for any $i\geq 1$.
\end{theorem}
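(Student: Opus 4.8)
The plan is to combine three ingredients: the classification of admissible forms on complex irreducible Clifford modules from \cite{AC}, the structural dichotomy of Theorem \ref{nonloso}, and the list of simple complex prolongations in Theorem \ref{theoC}. The statement has two faces, namely the congruence constraint on $\dim V$ and the identification of $\g$ in each admissible dimension, and I would address them in that order.

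First I would determine for which dimensions a complex extended translation algebra with $N=1$ exists at all. Since $W=\mathbb{S}$ is irreducible, the bracket \eqref{equazione_bracket} is prescribed by a nondegenerate admissible form $\beta$ on $\mathbb{S}$ with invariants $(\tau,\sigma)$, and antisymmetry of $[\,\cdot\,,\,\cdot\,]$ forces $\tau\sigma=-1$, since $\beta(v\cdot s,t)=\tau\beta(s,v\cdot t)=\tau\sigma\,\beta(v\cdot t,s)$ must equal $-\beta(v\cdot t,s)$. Consulting the tables of admissible forms on $\mathbb{S}$ in \cite{AC}, together with the Clifford-algebra classification of \cite{LM}, a form with $\tau\sigma=-1$ exists exactly when $\dim V\equiv 4,5,6,7,8\pmod 8$; for $\dim V\equiv 1,2,3\pmod 8$ every admissible form on the irreducible module has $\tau\sigma=+1$, so no such $\m$ exists. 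This establishes the congruence.

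Next I would apply the dichotomy. By Theorem \ref{nonloso}, for $\dim V\geq 3$ and $N=1$ either $\g_p=0$ for all $p\geq 1$ or $\g$ is simple. In the simple case $\g$ occurs in Theorem \ref{theoC}; inspecting the rows with $N=1$ there, the $\mathrm{A}_\ell$ family has $N=\ell-3$ and hence forces $\ell=4$, giving $\g=\mathfrak{sl}(5,\C)$ with $\dim V=4$; the $\mathrm{C}_\ell$ family has $N=2(\ell-2)\geq 2$ and is excluded; and $\mathrm{E}_6$, $\mathrm{F}_4$ arise with $N=1$ in dimensions $8$ and $7$. Hence a simple prolongation with $N=1$ occurs precisely for $\dim V\in\{4,7,8\}$, with the gradations of Theorem \ref{theoC}.

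It then remains to treat the admissible dimensions outside $\{4,7,8\}$. For these, $\dim\g_{-2}=\dim V\notin\{3,4,7,8\}$, so $\g$ matches no row of Theorem \ref{theoC} and cannot be simple; by the dichotomy $\g_p=0$ for all $p\geq1$, whence $\g=\m+\g_0$ with $\g_0$ as in Theorem \ref{lo0}. Since $\m$ is a nonzero nilpotent ideal, the radical of $\g$ is nonzero and $\g$ is not semisimple. The main obstacle is the first step: correctly extracting from \cite{AC} the exact range of dimensions admitting an admissible form with $\tau\sigma=-1$ and reconciling the sign conventions, after which the remaining steps are a direct reading of Theorems \ref{nonloso}, \ref{theoC} and \ref{lo0}.
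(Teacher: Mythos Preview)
Your overall strategy matches the paper's: cite \cite{AC} for the congruence constraint, invoke the dichotomy of Theorem~\ref{nonloso}, and read off the $N=1$ entries of Theorem~\ref{theoC}. However, there is a logical gap in the direction you did not argue. For $\dim V\in\{4,7,8\}$ you have only shown that \emph{if} $\g$ is simple then it is the algebra listed; you have not shown that $\g$ \emph{is} simple. The dichotomy of Theorem~\ref{nonloso} leaves open, a priori, the possibility that for your given $\m$ one has $\g_p=0$ for all $p\ge 1$ even in these dimensions. Theorem~\ref{theoC} only tells you that \emph{some} extended translation algebra with $N=1$ in each of these dimensions has simple prolongation; it does not tell you that \emph{every} such $\m$ does.

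The missing step, which the paper supplies, is to check that for $\dim V=4,7,8$ there is exactly one equivalence class of complex extended translation algebras with $N=1$ (again from the tables in \cite{AC}). Then your $\m$ is isomorphic to the negatively graded part of the simple algebra in Theorem~\ref{theoC}, so its maximal transitive prolongation has $\g_1\neq 0$ and, by Theorem~\ref{nonloso}, is simple and equal to that algebra. Once you insert this uniqueness check, your argument is complete and essentially identical to the paper's.
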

In the real case, we obtain instead the following theorem.
\begin{theorem}
\label{N1r}
Let $V$ be a pseudo-Euclidean vector space of signature $\mathrm{sign} V=(p,q)$ and $n=p+q\;\mathrm{mod}\;8$, $s=p-q\;\mathrm{mod}\;8$.
Let $\mathfrak{m}=\mathfrak{m}_{-2}+\mathfrak{m}_{-1}=V+W$ be a real extended translation algebra with $N=1$. Then 
$(n,s)\neq (1,-1)$, $(2,-2)$, $(2,0)$, $(3,-1)$. If $\dim V=1,2$, then the maximal transitive real prolongation $\g$ is infinite dimensional and
\begin{itemize}
\item[--] $\mathfrak{g}$ is the real contact algebra in dim. $3$ if $\mathrm{sign}\, V=(1,0)$,
\item[--] $\mathfrak{g}$ is the complex contact algebra in dim. $3$ if $\mathrm{sign}\, V=(2,0)$.
\end{itemize}
If $\dim V\geq 3$, then $\g$ is finite dimensional and it is simple exactly in the following cases
\begin{itemize}
\item[--] $\mathfrak{g}=\mathfrak{sp}(1,2)$ if $\mathrm{sign}\, V=(3,0)$ or $(0,3)$,
\item[--] $\mathfrak{g}=\mathfrak{sp}(3,\mathbb{R})$ if $\mathrm{sign}\, V=(2,1)$,
\item[--] $\mathfrak{g}=\mathfrak{sl}(5,\mathbb{R})$ if $\mathrm{sign}\, V=(2,2)$,
\item[--] $\mathfrak{g}=\mathfrak{sl}(3,\mathbb{H})$ if $\mathrm{sign}\, V=(4,0)$ or $(0,4)$,
\item[--] $\mathfrak{g}=\mathfrak{su}(3,3)$ or $\mathfrak{su}(2,4)$ if $\mathrm{sign}\, V=(3,1)$,
\item[--] $\mathfrak{g}=\mathfrak{su}(2,3)$ if $\mathrm{sign}\, V=(1,3)$,
\item[--] $\mathfrak{g}=\mathrm{F\,II}$ if $\mathrm{sign}\, V=(7,0)$,
\item[--] $\mathfrak{g}=\mathrm{F\,I}$ if $\mathrm{sign}\, V=(3,4)$,
\item[--] $\mathfrak{g}=\mathrm{E\,IV}$ if $\mathrm{sign}\, V=(8,0)$ or $(0,8)$,
\item[--] $\mathfrak{g}=\mathrm{E\,III}$ if $\mathrm{sign}\, V=(7,1)$,
\item[--] $\mathfrak{g}=\mathrm{E\,II}$ if $\mathrm{sign}\, V=(3,5)$,
\item[--] $\mathfrak{g}=\mathrm{E\,I}$ if $\mathrm{sign}\, V=(4,4)$,
\end{itemize}
with gradation as described in Theorem \ref{abcdefg}.\\
In all other dimensions and signatures, $\mathfrak{g}$ is not semi-simple, $\mathfrak{g}_0$ is as in Theorem \ref{lo0}, and $\mathfrak{g}_{i}=0$ for any $i\geq 1$.
\end{theorem}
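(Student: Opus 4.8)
The plan is to obtain the classification as a synthesis of the structural results already proved, organising the argument by $\dim V$. The statement splits into three logically independent components: (i) deciding which signatures $(p,q)$ actually carry an $N=1$ extended translation algebra, which is the source of the excluded pairs $(n,s)$; (ii) the infinite-dimensional contact cases in dimensions $\dim V=1,2$; and (iii) the dichotomy ``$\g$ simple versus $\g_{\ge 1}=0$'' for $\dim V\ge 3$. Once (i) is settled, parts (ii) and (iii) are essentially a matter of intersecting the already-established classifications with the constraint $N=1$.

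For (i), I would note that for $N=1$ the module $W=S$ is irreducible, so specifying an extended translation algebra is the same as specifying a nondegenerate $\so(V)$-invariant form $\beta$ satisfying (B1)--(B3). In the language of \cite{AC}, conditions (B2)--(B3) single out exactly the admissible forms whose invariants satisfy $\tau\sigma=-1$, i.e. those for which the induced map $\Gamma\colon S\wedge S\to V$ is genuinely skew-symmetric; nondegeneracy of $\beta$ together with irreducibility of $S$ then forces $[\m_{-1},\m_{-1}]=\m_{-2}$, so that no further constraint is needed. A finite inspection of the explicit basis of $\so(V)$-invariant forms given in \cite{AC} (equivalently of the Clifford classification of \cite{LM}), organised by the residues $(n,s)\bmod 8$, shows that a form with $\tau\sigma=-1$ exists precisely when $(n,s)$ avoids the four pairs $(1,-1)$, $(2,-2)$, $(2,0)$, $(3,-1)$. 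This yields the excluded signatures.

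For (ii), the Propositions of Section \ref{sec3} apply verbatim: for $\dim V=1,2$ the prolongation $\g$ is an infinite-dimensional contact algebra, and imposing $N=1$ leaves only $\sign V=(1,0)$ (the real contact algebra in dimension $3$) and $\sign V=(2,0)$ (the complex contact algebra in dimension $3$); the remaining low-dimensional signatures $(0,1)$, $(0,2)$, $(1,1)$ require $N$ even and are exactly the excluded pairs $(1,-1)$, $(2,-2)$, $(2,0)$. For (iii), assume $\dim V\ge 3$, so $\g$ is finite-dimensional by Theorem \ref{fine}. By Theorem \ref{nonloso} either $\g_p=0$ for all $p\ge 1$, in which case $\g=\m+\g_0$ with $\g_0$ as in Theorem \ref{lo0} (the non-semisimple case), or $\g$ is simple. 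In the latter case $\g$ is a simple maximal transitive prolongation of an $N=1$ real extended translation algebra, hence occurs in the complete list of Theorem \ref{abcdefg} with multiplicity $N=1$. Since in each family there the signature of $\g_{-2}=V$ is fixed while the multiplicity grows with the rank, the condition $N=1$ pins the rank and the signature to one of finitely many small values; reading off the corresponding rows of Table \ref{tabella_reale} produces exactly the list in the statement, with the two possibilities $\mathfrak{su}(3,3)$ and $\mathfrak{su}(2,4)$ in signature $(3,1)$ distinguished by the form of $\h_0$ as in the proof of Theorem \ref{abcdefg}. Conversely, since Theorem \ref{abcdefg} is exhaustive, any admissible signature not appearing there with $N=1$ cannot give a simple $\g$, so $\g_{\ge 1}=0$; the standard identifications of the low-rank types with classical matrix algebras then yield the explicit Lie algebras.

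The hard part is component (i). The existence analysis must be run uniformly over $(n,s)\bmod 8$, and it is delicate because one must track when $S=S^++S^-$ is $\so(V)$-reducible and how the isotropy/orthogonality alternative ($\imath=\pm 1$) enters the definition of admissibility, since these determine precisely which forms with $\tau\sigma=-1$ are available. A second delicate point in (iii) is that the matching of ``$N=1$ rows of Table \ref{tabella_reale}'' to signatures $(p,q)$ cannot be read off the complex picture naively: several of these real prolongations complexify to extended translation algebras with $N=2$ (e.g. the cases $\mathrm{A_5\,II}$ and $\mathrm{A_5\,III}$), so one must appeal to Lemma \ref{uaoh} and to the explicit real forms built in the proof of Theorem \ref{abcdefg} rather than to the real multiplicity alone.
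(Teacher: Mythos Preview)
Your overall strategy matches the paper's: cite \cite{AC} for the existence constraints, use Section~\ref{sec3} for $\dim V\le 2$, and for $\dim V\ge 3$ combine Theorems~\ref{fine}, \ref{nonloso} and \ref{abcdefg}. But there is a genuine gap in your part (iii). You correctly argue that if $\g$ is simple then $(\g,\sign V)$ must appear in Table~\ref{tabella_reale} with $N=1$, and (contrapositively) that if $\sign V$ is not among those signatures then $\g_{\ge 1}=0$. What is missing is the other direction of the ``exactly'': that for every signature in the list, \emph{every} $N=1$ extended translation algebra $\m$ with that signature has simple prolongation. Exhaustiveness of Theorem~\ref{abcdefg} only guarantees that \emph{some} $\m$ in that signature prolongs to a simple $\g$; a priori an inequivalent $\m$ with the same $V$ and $N=1$ could land in the $\g_{\ge 1}=0$ branch of Theorem~\ref{nonloso}, and then the theorem as stated would be false for that signature. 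Your ``conversely'' sentence is just the contrapositive of the direction you already have, not a new implication.

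The paper closes this gap by a uniqueness/exhaustion argument that you do not make explicit. For each listed signature other than $(4,0)$, $(0,4)$, $(3,1)$ there is, up to isomorphism, a single $N=1$ extended translation algebra (essentially one admissible form with $\tau\sigma=-1$), so it must coincide with the one constructed in the proof of Theorem~\ref{abcdefg}, and hence $\g$ is simple. In signatures $(4,0)$, $(0,4)$, $(3,1)$ there are four linearly independent admissible forms with $\tau\sigma=-1$, and one verifies case by case that every resulting $\m$ has simple prolongation; this is exactly what produces the two outcomes $\mathfrak{su}(3,3)$ and $\mathfrak{su}(2,4)$ in signature $(3,1)$. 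Your ``delicate points'' paragraph touches on the enumeration of admissible forms, but you need to connect that enumeration to the statement that \emph{every} $\m$ in a listed signature yields a simple $\g$, not merely that one does.
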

\begin{proof}[Proof of Theorems \ref{N1c} and \ref{N1r}]
The constraints on the dimension and signature of $V$, for the existence of extended translation algebras with $N=1$, are proved in \cite{AC}.
For all dimensions and signatures that do not occur in Theorems \ref{theoC} and \ref{abcdefg}, the conclusion follows from Theorem \ref{lo0}, Theorem \ref{nonloso} and the results of section \ref{sec3}. 
\\ 
In all other cases, except the real cases of signature $(4,0)$, $(0,4)$ and $(3,1)$, 
it is straightforward to show that there is only one equivalence class of extended translation algebras with $N=1$.
Finally, similar, but much longer, elementary computations are necessary in the real cases with $\mathrm{sign} V=(4,0)$, $(0,4)$, $(3,1)$, where there are $4$ linearly independent admissible bilinear forms on $W$ whose invariants satisfy $\tau\sigma=-1$. In all cases one verifies that the maximal transitive prolongation is simple. The result then follows from Theorem \ref{theoC} and Theorem \ref{abcdefg}.
\end{proof}

\end{document}